\documentclass[a4paper,UKenglish,cleveref, autoref, thm-restate, nolinenumbers]{lipics-v2021}
\usepackage{rotating}
\usepackage{bold-extra}
\hideLIPIcs
\nolinenumbers

\title{The Smallest String Attractors of Fibonacci and Period-Doubling Words} 

\author{Mutsunori Banbara}{Graduate School of Informatics, Nagoya University, Nagoya, Japan}{banbara@nagoya-u.jp}{https://orcid.org/0000-0002-5388-727X}{} \author{Hideo Bannai}{M\&D Data Science Center, Institute of Integrated Research, Institute of Science Tokyo, Tokyo, Japan}{hdbn.dsc@tmd.ac.jp}{https://orcid.org/0000-0002-6856-5185}{JSPS KAKENHI Grant Number JP24K02899}
\author{Peaker Guo}{M\&D Data Science Center, Institute of Integrated Research, Institute of Science Tokyo, Tokyo, Japan}{peakerguo@gmail.com}{https://orcid.org/0000-0002-9098-1783}{}
\author{Dominik K\"oppl}{Department of Computer Science and Engineering, University of Yamanashi, Kofu, Japan}{dkppl@yamanashi.ac.jp}{https://orcid.org/0000-0002-8721-4444
}{JSPS KAKENHI Grant Number JP25K21150} \author{Takuya Mieno}{Graduate School of Informatics and Engineering, University of Electro-Communications, Chofu, Japan}{tmieno@uec.ac.jp}{https://orcid.org/0000-0003-2922-9434}{JSPS KAKENHI Grant Number JP24K20734}
\author{Yoshio Okamoto}{Graduate School of Informatics and Engineering, University of Electro-Communications, Chofu, Japan}{okamotoy@uec.ac.jp}{https://orcid.org/0000-0002-9826-7074}{JSPS KAKENHI Grant Number JP23K10982}

\authorrunning{M. Banbara et al.} 

\Copyright{Jane Open Access and Joan R. Public} 

\ccsdesc[500]{Mathematics of computing~Combinatorics on words}

\keywords{String attractors, Fibonacci words, Period-doubling words, Combinatorics on words} 

\category{} 

\relatedversion{}

\EventEditors{John Q. Open and Joan R. Access}
\EventNoEds{2}
\EventLongTitle{42nd Conference on Very Important Topics (CVIT 2016)}
\EventShortTitle{CVIT 2016}
\EventAcronym{CVIT}
\EventYear{2016}
\EventDate{December 24--27, 2016}
\EventLocation{Little Whinging, United Kingdom}
\EventLogo{}
\SeriesVolume{42}
\ArticleNo{23}

\usepackage{amsmath}
\usepackage{amssymb}
\usepackage{mathtools}
\usepackage[noadjust]{cite}

\usepackage[capitalise, noabbrev]{cleveref}
\usepackage[dvipsnames]{xcolor}

\usepackage{xspace}
\xspaceaddexceptions{]\}}

\newcommand{\attof}[1]{\ensuremath{\mathcal{A}(#1)}\xspace}
\newcommand{\numattfib}{2^{n-4} + 2^{\lceil n/2 \rceil - 2}}

\newcommand{\lmus}{U} \newcommand{\rmus}{V}

\begin{document}

\maketitle

\begin{abstract}
    A string attractor of a string $T[1..|T|]$ is a set of positions $\Gamma$ of $T$ such that any substring $w$ of $T$ has an occurrence that crosses a position in $\Gamma$, i.e., there is a position $i$ such that $w = T[i..i+|w|-1]$ and the intersection $[i,i+|w|-1]\cap \Gamma$ is nonempty.
    The size of the smallest string attractor of Fibonacci words is known to be $2$.
    We completely characterize the set of all smallest string attractors of Fibonacci words,
    and show a recursive formula describing the
    $\numattfib$
    distinct position pairs that are the smallest string attractors of the $n$th Fibonacci word for $n \geq 7$.
    Similarly, the size of the smallest string attractor of period-doubling words is known to be $2$.
    We also completely characterize the set of all smallest string attractors of period-doubling words,
    and show a formula describing the two distinct position pairs that are the smallest string attractors of the $n$th period-doubling word for $n\geq 2$.
    Our results show that strings with the same smallest attractor size can have a drastically different number of distinct smallest attractors.
\end{abstract}

\clearpage
\setcounter{page}{1}

\section{Introduction}
For any string $T$, a set $\Gamma$ of positions is a {\em string attractor} (or simply attractor)
of $T$, if any substring of $T$
has an occurrence that {\em crosses} (i.e., contains) a position in $\Gamma$.
String attractors~\cite{DBLP:conf/stoc/KempaP18} are an important combinatorial notion that captures
the repetitiveness (compressiveness) of the string in the sense that
dictionary compression algorithms can be viewed as algorithms that compute
string attractors.
It is known that the size of the smallest string attractor of~$T$, denoted by $\gamma(T)$,
bounds all dictionary compression measures from below,
and is NP-hard to compute~\cite{DBLP:conf/stoc/KempaP18}.

The study of string attractors on well known families of strings has attracted much attention.
Mantaci et al.~\cite{DBLP:conf/ictcs/MantaciRRRS19,DBLP:journals/tcs/MantaciRRRS21}
initiated the study of string attractors on standard Sturmian words (which include the Fibonacci words), and Thue-Morse words.
For any standard Sturmian word $T$, they showed that
at least one of
the sets $\Gamma_1 = \{ \eta+1,\eta+2 \}$ or $\Gamma_2= \{ |T|-\eta-3, |T|-\eta-2\}$
is a smallest string attractor of $T$,
where $\eta$ is the length of the longest proper palindromic prefix of $T[1..|T|-2]$.
For De Bruijn words of length~$n$, they showed that the smallest attractor size asymptotically approaches $n/\log n$.
They also showed an attractor of size $\Theta(\log |\mathcal{T}_n|)$
for the $n$th Thue-Morse $\mathcal{T}_n$.
Later, Kutsukake et al.~\cite{DBLP:conf/spire/KutsukakeMNIBT20}
showed that $\gamma(\mathcal{T}_n) = 4$ for all $n\geq 4$.
In a more general setting,
Schaeffer and Shallit~\cite{schaeffer2024stringattractorsautomaticsequences} discussed string attractors of
all prefixes of automatic sequences, including the period-doubling words, Thue-Morse words, and Tribonacci words.
In particular, they showed that the size of the smallest attractor of any prefix of the period-doubling word longer than one is $2$, using the Walnut theorem prover~\cite{mousavi2021automatictheoremprovingwalnut}.

For the Fibonacci words, defined as $F_0 = \texttt{b}$, $F_1 = \texttt{a}$, $F_n = F_{n-1}F_{n-2}$,
it holds that $\eta = f_{n-1}-2$ (where $f_{n-1} = |F_{n-1}|$),
and the above result of Mantaci et al.
translates to the sets $\Gamma_1 = \{ f_{n-1}-1, f_{n-1} \}$ or  $\Gamma_2= \{ f_{n-2}-1, f_{n-2} \}$.
Since $\Gamma_2$ cannot be an attractor
(it can be shown that
the length $f_{n-1}$ suffix of $F_n$ occurs uniquely and does not have an occurrence crossing a position in $\Gamma_2$),
it follows that $\Gamma_1$ is an attractor of $F_n$.
Hence, this gives an explicit pair of positions to be a smallest attractor of Fibonacci words.
However, in general, a given string can have multiple smallest string attractors.
For example, any single position of a unary string is an attractor.
For a more interesting example, the set of all smallest string attractors of $F_8$ is shown in~\Cref{fig:att-example-n=8}.
See also~\Cref{sec:smaller_order_instances} for more examples.

In this paper, we take a deeper look into the combinatorial aspects of string attractors of Fibonacci words and period-doubling words,
and give a complete characterization of their smallest string attractors.
Our characterization for Fibonacci words is based on occurrences of {\em singular words}~\cite{DBLP:journals/ejc/WenW94} in the Fibonacci word.
We show a recursive formula describing the set of all $\numattfib$
smallest string attractors of the $n$th Fibonacci word for all $n\geq 7$.
For the $n$th period-doubling word $D_n$
defined as $D_0 = \texttt{a}$ and $D_n = \phi(D_{n-1})$
for the morphism $\phi(\texttt{a}) =\texttt{ab}$ and $\phi(\texttt{b})=\texttt{aa}$,
we show that the position pairs
$\{3\cdot 2^{n-3},6\cdot 2^{n-3}\}$ and
$\{4\cdot 2^{n-3},6\cdot 2^{n-3}\}$
characterize the smallest attractors for $n \geq 3$.
To the best of our knowledge, this is the first work which shows a complete characterization of the set of smallest string attractors for non-trivial families of strings.

It is also worth noting that, although the smallest attractor size for both the Fibonacci and period-doubling words is two, our results show that the \emph{number} of smallest attractors can differ drastically, namely, from exponential in the length to constant.
We do not yet have a clear interpretation of what this quantity captures,
and its understanding remains an open and interesting question, as it may allow us to
consider more fine-grained types of repetitiveness. Other related open problems are listed in~\Cref{sec:open_problems}.

\subsection*{Related Work}
Mieno et al.~\cite{DBLP:conf/cpm/MienoIH22} gave a
complete characterization of smallest straight line programs (SLPs) of Fibonacci words,
showing that an SLP of a Fibonacci word is a smallest SLP if and only if
it can be obtained by some implementation of the RePair algorithm~\cite{DBLP:conf/dcc/LarssonM99},
i.e.,
the recursive pairing of a most frequent adjacent symbol pair.
The number of distinct smallest SLPs for $F_{n}$ was shown to be
$2\lfloor (n+1)/2\rfloor -2$ for $n\geq 3$.

We note that for the strings we have considered,
the approach of Schaeffer and Shallit~\cite{schaeffer2024stringattractorsautomaticsequences}
and the software Walnut~\cite{mousavi2021automatictheoremprovingwalnut}
can be applied,
and it is possible to  compute a finite state automaton
for symbols in $(\{ 0,1 \}\times\{0,1\}\times\{0,1\})^*$,
such that accepting paths spell out
a representation of the two attractor positions and the length of a prefix of the string.
While the correctness of our claims can, in principle,
be verified by analyzing and characterizing these paths,
this would not seem to give us much insight into why the statements hold.
This approach is discussed briefly in~\Cref{sec:Walnut}.

\begin{figure}[t]
    \centering
    \includegraphics[width=\linewidth]{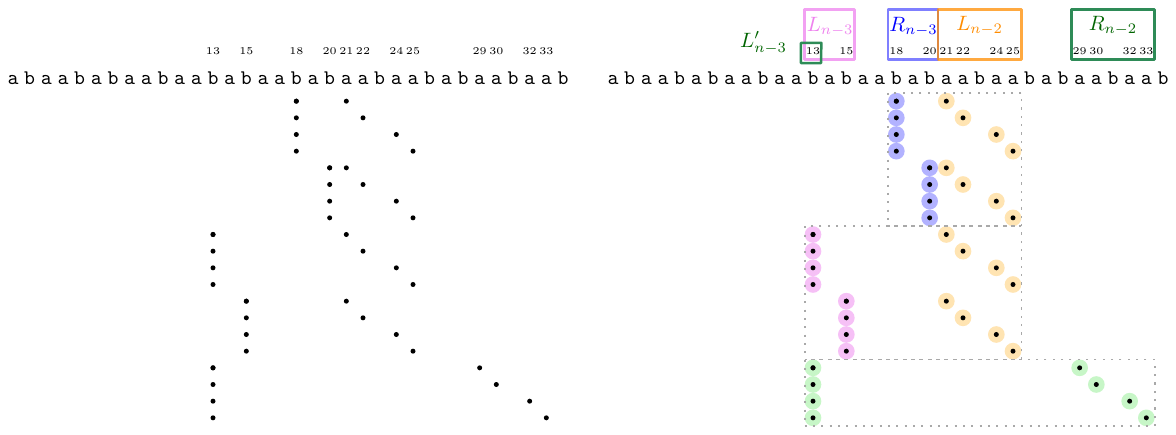}
    \caption{
        For $n=8$.
        Left: all smallest attractors of $F_n$,
        where each pair of horizontally aligned dots represents an attractor.
        Right: characterization of these attractors according to \cref{thm:all_smallest_attractors_of_fib};
        the three dotted boxes (top to bottom) correspond to \cref{lem:valid_att_from_flip,lem:valid_att_from_flip_offset,lem:valid_att_from_n-2_offset}.
        (See \cref{fig:att-example-n=9} in \cref{sec:additional-figures} for the case $n=9$.)
    }
    \label{fig:att-example-n=8}
\end{figure}

\section{Preliminaries}
Let $\Sigma = \{\texttt{a}, \texttt{b}\}$ be a binary alphabet.
An element of $\Sigma$ is called a symbol.
An element of $\Sigma^*$ is called a string.
For a string $x$, let $|x|$ denote its length, and let $\varepsilon$ denote the empty string,
i.e., $|\varepsilon|=0$.
For any integer $1 \leq i \leq |x|$, let $x[i]$ denote the $i$th symbol of $x$,
and for any $1 \leq i\leq j \leq |x|$,
let $x[i..j] = x[i]\cdots x[j]$, and $x[i..j) = x[i..j-1]$.
Let $x^R$ denote the \emph{reverse} of $x$, i.e., if $x = x[1]x[2]\cdots x[|x|]$, then $x^R = x[|x|]x[|x|-1]\cdots x[1]$.
For a set $X$ of integers and another integer $i$, let
$i\oplus X = X\oplus i := \{i + x : x \in X\}$ and
$i\ominus X := \{i - x : x \in X\}$.
For two sets $X$ and $Y$,
let $X \otimes Y := \{ \{x, y\} : (x, y) \in X \times Y \}$.
For a string $T$ and an occurrence $w = T[i..j]$ of a substring $w$ of $T$,
we say that the occurrence of $w$ {\em crosses} position $k$, if $i \leq k \leq j$.
A boundary is a pair of consecutive positions sometimes identified with
two strings ending and starting at the respective positions.
We say that an occurrence of a string crosses the boundary if it crosses both positions of the boundary.

\begin{definition}[String Attractors~\cite{DBLP:conf/stoc/KempaP18}]
    A {\em string attractor}, or an {\em attractor} for short, of a string $T$ is a set $\Gamma$ of positions of $T$
    such that for any substring $u$ of $T$, there exists an occurrence of $u$ in $T$ that crosses some position in $\Gamma$.
    We denote by $\attof{T}$ the set of all smallest attractors of $T$.
\end{definition}
Since string attractors are invariant under string reversal, meaning that the ``mirrored'' string attractor is a string attractor of the mirrored string, we obtain the following known observation.

\begin{observation}\label{fact:att-rev}
    If $\{p, q\}$ is an attractor of a string $w$,
    then $\{ |w|-p+1, |w|-q+1 \}$  is an attractor of $w^R$.
\end{observation}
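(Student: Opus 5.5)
The plan is to build an explicit bijection between substrings of $w$ and substrings of $w^R$ that sends each occurrence to an occurrence and tracks the position it covers. Write $n=|w|$, and let $\rho(i)=n-i+1$ be the reflection of positions. By definition $w^R[i]=w[\rho(i)]$ for every $1\le i\le n$, so that $\rho$ is the map carrying a position of $w$ to the matching position of $w^R$. The claimed attractor $\{n-p+1,n-q+1\}$ is exactly $\{\rho(p),\rho(q)\}$, so the statement says that the reflection of an attractor of $w$ is an attractor of $w^R$.

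The first step is to record how occurrences behave under reversal. Directly from the indexing, for $1\le i\le j\le n$ we have
\[
w^R[i..j]=\bigl(w[\rho(j)..\rho(i)]\bigr)^R .
\]
Hence $u$ occurs in $w^R$ at the interval $[i,j]$ if and only if $u^R$ occurs in $w$ at the interval $[\rho(j),\rho(i)]$. Since $\rho$ is an order-reversing bijection of $\{1,\dots,n\}$, a position $k$ lies in $[\rho(j),\rho(i)]$ if and only if $\rho(k)$ lies in $[i,j]$. In words, the occurrence of $u^R$ in $w$ crosses $k$ exactly when the corresponding occurrence of $u$ in $w^R$ crosses $\rho(k)$.

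With this in hand, the argument is short. Take any substring $u$ of $w^R$. Then $u^R$ is a substring of $w$. Because $\{p,q\}$ is an attractor of $w$, some occurrence $w[a..b]=u^R$ crosses a position $k\in\{p,q\}$. By the first step, $w^R[\rho(b)..\rho(a)]=u$, and this occurrence crosses $\rho(k)\in\{\rho(p),\rho(q)\}$. Since $u$ was arbitrary, $\{\rho(p),\rho(q)\}$ is an attractor of $w^R$.

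There is no real obstacle here. The only step needing care is the index bookkeeping in the displayed identity, in particular that reversal swaps the endpoints $\rho(j)$ and $\rho(i)$. Interval membership is preserved because $\rho$ is a decreasing bijection. The argument never uses that the attractor has two elements, so the same proof works for any attractor $\Gamma$, with reflected attractor $\{n-x+1 : x\in\Gamma\}$. Since $\rho$ is injective, the reflected set also has the same size as $\Gamma$, so smallest attractors are sent to smallest attractors.
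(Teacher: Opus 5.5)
Your proof is correct and takes essentially the same route as the paper, which justifies the observation only by noting that attractors are invariant under reversal; you make this explicit with the reflection $\rho(i)=n-i+1$ and the identity $w^R[i..j]=(w[\rho(j)..\rho(i)])^R$. One small aside: your closing claim that smallest attractors map to smallest attractors also needs $\gamma(w)\le\gamma(w^R)$, which you get by applying the same argument to $w^R$ (since $(w^R)^R=w$), but this goes beyond what the statement asks.
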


A string $w$ is a {\em minimal unique substring} (MUS)~\cite{IlieS11} of $T$
if $w$ is a substring of $T$ that has a unique occurrence in $T$,
and any proper substring of $w$ has multiple occurrences in $T$.
It is easy to see that any string attractor must contain at least one position
that is crossed by the unique occurrence of a MUS.

\section{The Smallest Attractors of Fibonacci Words}

\subsection{Properties of Fibonacci Words and Singular Words}
\begin{definition}[$F_n$ and $f_n$]
    For each $n \geq 0$,
    the $n$th \emph{Fibonacci word} is defined as
    $F_0 = \texttt{b}, F_1 = \texttt{a}$, and
    $F_n = F_{n-1} \ F_{n-2}$ for each $n \geq 2$.
    The  $n$th \emph{Fibonacci number} is defined as
    $f_0 = 1, f_1 = 1$, and $f_n = f_{n-1} + f_{n-2}$ for each  $n \geq 2$.
    Note that $f_n = |F_n|$.
\end{definition}

\begin{definition}[$G_n$, $\Delta_n$]
    For $n \geq 2$,
    let $G_n = F_n[1 \ldots f_n-2]$ and
    $F_n = G_n\Delta_n$.
    For convenience, let $\Delta_0 = \texttt{ab}$ and $\Delta_1=\texttt{ba}$.
\end{definition}
Below are known or simple observations concerning Fibonacci words.
\begin{observation}[e.g.\ \cite{DBLP:journals/ipl/Luca81,DBLP:journals/siamcomp/KnuthMP77,Lothaire_2002}]\label{lem:fib_properties}
    The following propositions hold.
    \begin{enumerate}
        \item\label{lem:fib_properties_one}
              For $i \geq 2$, $\Delta_i = \Delta_{i\bmod 2}$.
        \item\label{lem:fib_properties_two}
              For $i\geq 2$, $G_i$ is a palindrome
              (by $G_i = G_{i-2}\Delta_{i-2}G_{i-3}\Delta_{i-3}G_{i-2}$ and induction).
        \item\label{lem:fib_properties_four}
              For $i \geq 3$, $F_i = F_{i-1}F_{i-2} = F_{i-2}G_{i-1}\Delta_{i}$.
        \item\label{lem:fib_properties_three}
              For $i \geq 5$,
              \begin{align*}
                  F_i & = G_i\Delta_i = F_{i-1}G_{i-2}\Delta_{i}
                  = G_{i-2}\Delta_{i-2}G_{i-3}\Delta_{i-3}G_{i-2}\Delta_{i}
                  = G_{i-2}(F_{i-3})^R(F_{i-2})^R\Delta_{i}      \\
                      & = G_{i-2}(F_{i-1})^R\Delta_i.
              \end{align*}
    \end{enumerate}
\end{observation}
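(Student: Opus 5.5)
We argue by induction on $i$, treating the four items in order, since each later item uses the earlier ones. The only tool is the defining recursion $F_i=F_{i-1}F_{i-2}$, together with $F_i = G_i\Delta_i$, i.e.\ $G_i$ is $F_i$ minus its last two symbols.

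For item~\ref{lem:fib_properties_one}, we use $F_i = F_{i-1}F_{i-2}$: for $i\ge 4$ we have $f_{i-2}\ge 2$, so the last two symbols of $F_i$ are those of $F_{i-2}$, giving $\Delta_i=\Delta_{i-2}$. The base cases are direct: $F_2=\texttt{ab}$, so $\Delta_2=\texttt{ab}=\Delta_0$; and $F_3=\texttt{aba}$, so $\Delta_3=\texttt{ba}=\Delta_1$. Induction then gives $\Delta_i=\Delta_{i\bmod 2}$.

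For item~\ref{lem:fib_properties_two}, we first establish the decomposition
\[
G_i=G_{i-2}\Delta_{i-2}G_{i-3}\Delta_{i-3}G_{i-2}.
\]
Expanding the recursion twice gives $F_i=F_{i-2}F_{i-3}F_{i-2}$. Write each factor as $G\Delta$ and drop the final $\Delta_{i-2}$, using $\Delta_i=\Delta_{i-2}$ from item~\ref{lem:fib_properties_one}. This is valid once each factor has length at least $2$, i.e.\ $i\ge 6$, with the convention $\Delta_0,\Delta_1$ covering $i=4,5$ after checking the small cases by hand. Since $\{\Delta_{i-2},\Delta_{i-3}\}=\{\texttt{ab},\texttt{ba}\}$, the two are mutual reverses. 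Taking the reverse of the decomposition and applying the induction hypothesis that $G_{i-2}$ and $G_{i-3}$ are palindromes returns the same word, so $G_i$ is a palindrome. The base cases $G_2=\varepsilon$, $G_3=\texttt{a}$, $G_4=\texttt{aba}$ and $G_5=\texttt{abaaba}$ are checked directly. This palindrome property is the main substantive step; everything else is bookkeeping.

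For item~\ref{lem:fib_properties_four}, observe that $F_{i-1}F_{i-2}$ and $F_{i-2}F_{i-1}$ differ only in their last two symbols. This is the classical near-commutation property, proved by induction from $F_{i-1}F_{i-2}=F_{i-2}F_{i-3}F_{i-2}$ and $F_{i-2}F_{i-1}=F_{i-2}F_{i-2}F_{i-3}$, reducing to the same statement for $(F_{i-2},F_{i-3})$. Hence $F_i=F_{i-2}F_{i-1}$ with its last two letters replaced by $\Delta_i$, which is exactly $F_{i-2}G_{i-1}\Delta_i$.

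For item~\ref{lem:fib_properties_three}, we verify the chain of equalities left to right.
\begin{itemize}
\item $F_i=F_{i-1}F_{i-2}$ with $F_{i-2}=G_{i-2}\Delta_i$ gives the second expression.
\item Substituting the decomposition of $G_i$ from item~\ref{lem:fib_properties_two} gives the third.
\item For the fourth, note that $\Delta_{i-2}G_{i-3}$ reversed equals $G_{i-3}\Delta_{i-2}^R=G_{i-3}\Delta_{i-3}=F_{i-3}$, using that $G_{i-3}$ is a palindrome. Similarly $\Delta_{i-3}G_{i-2}=(G_{i-2}\Delta_{i-2})^R=F_{i-2}^R$.
\item Finally, $(F_{i-3})^R(F_{i-2})^R=(F_{i-2}F_{i-3})^R=(F_{i-1})^R$.
\end{itemize}
The condition $i\ge5$ ensures that all the $G$'s involved are defined, with $i-3\ge2$.
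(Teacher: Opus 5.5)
Your proof is correct and follows the route the paper indicates. Item 1 comes from $F_i=F_{i-1}F_{i-2}$; item 2 from the decomposition $G_i=G_{i-2}\Delta_{i-2}G_{i-3}\Delta_{i-3}G_{i-2}$ together with $\Delta_{i-2}^R=\Delta_{i-3}$ and induction; item 3 from the standard near-commutation of $F_{i-1}F_{i-2}$ and $F_{i-2}F_{i-1}$, which the paper only cites as known; and item 4 by checking the chain link by link.

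The one slip is cosmetic. The decomposition in item 2 already holds for $i\ge 5$, not only for $i\ge 6$. At $i=4$ it is meaningless, because $F_1$ has length $1$ and $G_1$ is undefined, and the conventions for $\Delta_0,\Delta_1$ do not change that. Since you verify $G_2,\dots,G_5$ directly, nothing in the argument is affected.
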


\begin{definition}[Singular Words]\label{def:singular_words}
    Let $S_{-1} = \varepsilon$, $S_0 = \texttt{a}$, $S_1 = \texttt{b}$, and $S_n = S_{n-2}S_{n-3}S_{n-2}$ for $n \geq 2$.
\end{definition}
Notice that $|F_{n}|=|S_{n}|$,
and all singular words are palindromes.
In the literature~\cite{DBLP:journals/ejc/WenW94,DBLP:journals/jintseq/Fici15}, these words
have been defined to be $S_n = F_n[f_n-1]F_n[1..f_n-1] = F_n[f_n-1]G_nF_n[f_n-1]$.
However, we start from the above definition
(Property 2 (4) in~\cite{DBLP:journals/ejc/WenW94})
to simplify the presentation.
The next lemma summarizes results shown in~\cite{DBLP:journals/ejc/WenW94} that we utilize.
\begin{lemma}[\cite{DBLP:journals/ejc/WenW94}]\label{lem:s_properties}
    For any $n \ge 0$, the following propositions hold.
    \begin{enumerate}[(1)] \item\label{lem:s_properties_one}
              $S_{n} = S_{n \bmod 2}\left(\prod_{i=0}^{n-2}S_i\right)$. \hfill (Property 2 (12) in~\cite{DBLP:journals/ejc/WenW94})
        \item\label{lem:s_properties_two}
              $S_n$ is not a substring of $S_{n+1}$ nor of $\prod_{i=0}^{n-1}S_i$. \hfill (Properties 2 (2) and (13) in~\cite{DBLP:journals/ejc/WenW94})
        \item\label{lem:s_properties_three}
              Let $S_n S_{n+1} = u_1u_2u_3$ where $u_1$ is a non-empty proper prefix of $S_n$ and $u_3$ is a non-empty proper suffix of $S_{n+1}$. Then, $u_2 \neq S_i$ for any $i\geq 0$. \hfill{(Lemma 3 in~\cite{DBLP:journals/ejc/WenW94})}
    \end{enumerate}
\end{lemma}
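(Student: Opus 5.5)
The plan is to prove the three items in order. Item (1) is a direct induction, and items (2) and (3) are then proved by a simultaneous strong induction on $n$ that uses (1) as the bookkeeping tool.

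\emph{Item (1).} Induct on $n$. For $n=0,1$ the product is empty, so the claim reads $S_n=S_n$. For $n=2,3$ it is checked directly: $S_2=S_0S_{-1}S_0=\texttt{aa}=S_0S_0$ and $S_3=S_1S_0S_1=\texttt{bab}=S_1S_0S_1$. For $n\ge 4$, apply the induction hypothesis to the first factor of $S_n=S_{n-2}S_{n-3}S_{n-2}$. Since $(n-2)\bmod 2=n\bmod 2$, this gives $S_{n-2}=S_{n\bmod 2}\prod_{i=0}^{n-4}S_i$. Hence $S_n=S_{n\bmod 2}\bigl(\prod_{i=0}^{n-4}S_i\bigr)S_{n-3}S_{n-2}$, which is exactly the claimed form. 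A useful consequence is that $\prod_{i=0}^{n-1}S_i$ is $S_{n+1}$ with its first letter removed. Therefore the second half of (2) follows from the first half, and only ``$S_n$ is not a factor of $S_{n+1}$'' needs proof.

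\emph{Items (2) and (3).} These are proved together by strong induction, with small $n$ (say $n\le 3$) checked by hand. The basic length fact is $|S_n|=f_n$ with $f_n=f_{n-1}+f_{n-2}$.

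For (2), write $S_{n+1}=S_{n-1}S_{n-2}S_{n-1}$ and suppose $S_n=S_{n-2}S_{n-3}S_{n-2}$ occurs in it. Compare lengths: $|S_n|=f_n$, while $|S_{n-1}|=f_{n-1}<f_n$ and $|S_{n-1}S_{n-2}|=f_n$. So any occurrence of $S_n$ is either exactly the prefix $S_{n-1}S_{n-2}$, or the suffix $S_{n-2}S_{n-1}$, or it straddles the middle block.

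\begin{itemize}
\item The two aligned cases are excluded by the first letters. The prefix $S_{n-1}S_{n-2}$ starts with the letter of $S_{(n-1)\bmod 2}$, which differs from the first letter of $S_n$ by (1). The suffix case is excluded symmetrically, since all $S_i$ are palindromes.
\item In the straddling case, the inner copy of $S_{n-2}$ (the first factor of $S_n$) or of $S_{n-3}$ lands at a non-aligned position inside some $S_{n-2}S_{n-1}$ or $S_{n-1}S_{n-2}$. This has the form $u_1u_2u_3$ forbidden by (3) at a lower index, possibly after reversal, which (3) tolerates because the words are palindromes. The remaining sub-case is when that inner singular word lies entirely inside a single block $S_{n-1}$. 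That sub-case is excluded by (2) at index $n-2$ together with the prefix-product form from (1).
\end{itemize}

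For (3), take $S_nS_{n+1}=u_1u_2u_3$ with $u_2=S_i$. First bound $i$ by length. Then expand $S_nS_{n+1}=S_n\,S_{n-1}S_{n-2}S_{n-1}$ and, via (1), $S_n=S_{n\bmod 2}\prod_{j\le n-2}S_j$. The occurrence of $S_i$ must then either lie inside a concatenation of two consecutive singular words of lower index at a non-aligned position, or contain such a concatenation. The first alternative is killed by the induction hypothesis for (3). The second is killed by the induction hypothesis for (2), which forbids $S_k$ inside $S_{k+1}$ or inside $\prod_{j<k}S_j$. A simple alternative that should dispose of many subcases is a letter-count argument. Every factor of length $f_k$ of a Fibonacci word other than $S_k$ has the same number of $\texttt{b}$'s as $F_k$, while $S_k$ differs by one. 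This follows from balance together with $S_k$ being a conjugate of $F_k$ with one letter changed.

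\emph{Main obstacle.} The main obstacle is the case analysis in (3), together with the straddling case of (2). One must track exactly where block boundaries fall, so that every misaligned occurrence is reduced to a strictly smaller instance of (2) or (3). I would first attempt this with the letter-count invariant, falling back to the explicit block decomposition only where counts coincide.
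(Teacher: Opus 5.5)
Your proof of (1) is correct. The remark that $\prod_{i=0}^{n-1}S_i$ is $S_{n+1}$ minus its first letter correctly reduces the second half of (2) to the first. The straddling case of (2) also reduces to index $n-2$ as you say: for an occurrence of $S_n$ at $s\in[2,f_{n-1}]$ in $S_{n-1}S_{n-2}S_{n-1}$, its first factor $S_{n-2}$ either lies in the first $S_{n-1}$ (excluded by (2) at $n-2$), or it ends by position $f_n-1$ inside the middle $S_{n-2}$, which (3) at $n-2$ forbids after reversal.

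So everything rests on (3). The paper does not prove (3); it imports it as Lemma~3 of Wen and Wen. Here your proposal has a genuine gap: it offers a dichotomy rather than an argument, and the dichotomy fails with the tools you attach to its branches.

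\emph{It is not exhaustive.} Write $S_5S_6=S_1S_0S_1S_2S_3\cdot S_4S_3S_4$. The window $S_5S_6[6..10]=\texttt{babaa}$ (a candidate for $i=4$) starts exactly at the block $S_3$ and ends inside $S_4$. So (3) at index $3$ says nothing ($u_1$ would be empty), and the window contains no two consecutive blocks. It is excluded only by a first-letter/parity comparison you never state, and such aligned-start windows occur at every scale.

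\emph{The second branch is not what (2) controls.} For $i=n\ge 5$, the window $S_nS_{n+1}[2..f_n+1]$ contains $S_0S_1\cdots S_{n-2}$. Yet these contained blocks contradict nothing in (2), since each such $S_k$ does occur in $S_n$. What excludes the window is that the host block $S_{n-2}$ would sit at positions $f_{n-1}..f_n-1$ of the pattern $S_n=S_{n\bmod 2}S_0\cdots S_{n-3}S_{n-2}$. There it straddles the pattern's own boundary $S_{n-3}\,|\,S_{n-2}$ at a misaligned position, which is (3) at index $n-3$ applied inside the pattern.

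In general, (2) yields a contradiction only when a block $S_k$ of one decomposition lands inside a block $S_{k+1}$, or inside a segment $S_0\cdots S_{k-1}$, of the other. When it lands inside a block of index $k+2$ or more, which legitimately happens, you must descend into sub-blocks, compare the two decompositions again, and exhibit a decreasing measure. Making this terminate amounts to carrying along, in the simultaneous induction, an alignment statement: every occurrence of $S_k$ inside a longer singular word sits at a node of its parse tree. That is the content of the paper's parse-tree lemma, which the paper derives \emph{from} (2) and (3). None of this case analysis is done; it is exactly the ``main obstacle'' you flag.

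The letter-count fallback does not fill the hole.
\begin{itemize}
\item Balance plus ``$S_k$ is a one-letter modification of a conjugate of $F_k$'' only shows that length-$f_k$ factors have at most two $\texttt{b}$-counts.
\item To conclude that $S_k$ is the \emph{only} factor with the deviant count, you also need exactly $f_k+1$ factors of length $f_k$, with the $f_k$ conjugates of $F_k$ being distinct factors. That is Sturmian complexity, which you neither prove nor cite.
\item Even granting this, the criterion tells you when a window equals $S_k$, not where that can happen. Ruling out a misaligned window still requires computing that window's count from its position, which is the original problem in another form.
\end{itemize}
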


The following lemma is essentially the same
as what has been shown by Wen and Wen~(Theorem 1 in~\cite{DBLP:journals/ejc/WenW94})
for the infinite Fibonacci string.

\begin{lemma}\label{lem:fib_by_s}
    For any $n \geq 1$, $F_n = \left(\prod_{i=0}^{n-2}S_i\right)S_{(n-1)\bmod 2}$.
\end{lemma}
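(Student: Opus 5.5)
Let $P_m := \prod_{i=0}^{m}S_i$, with $P_{-1}=\varepsilon$. The claim to prove is $F_n = P_{n-2}\,S_{(n-1)\bmod 2}$ for every $n\ge 1$. I would argue by induction on $n$, using only \cref{def:singular_words}, the definition of $F_n$, and \cref{lem:s_properties}~(\ref{lem:s_properties_one}).

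\emph{Base cases.} For $n=1$, the right-hand side is $P_{-1}S_0=\texttt{a}=F_1$. For $n=2$, it is $S_0S_1=\texttt{ab}=F_2$. For $n=3$, it is $S_0S_1S_0=\texttt{aba}=F_3$.

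\emph{Inductive step.} Assume the claim for $n-1$ and $n-2$, where $n\ge 4$. Then
\[
F_n=F_{n-1}F_{n-2}=P_{n-3}\,S_{(n-2)\bmod 2}\;P_{n-4}\,S_{(n-3)\bmod 2}.
\]
We want to show this equals $P_{n-3}\,S_{n-2}\,S_{(n-1)\bmod 2}$. After cancelling the common prefix $P_{n-3}$, it suffices to prove
\[
S_{(n-2)\bmod 2}\,P_{n-4}\,S_{(n-3)\bmod 2} = S_{n-2}\,S_{(n-1)\bmod 2}.
\]
Since $(n-3)\bmod 2=(n-1)\bmod 2$, the last factors on both sides agree. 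It therefore remains to show $S_{(n-2)\bmod 2}\,P_{n-4}=S_{n-2}$. This is exactly \cref{lem:s_properties}~(\ref{lem:s_properties_one}) applied with index $m=n-2\ge 2$, which states $S_m=S_{m\bmod 2}\prod_{i=0}^{m-2}S_i$. This completes the induction.

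The only delicate point is the bookkeeping of parities and the empty products at small indices. That is why I verify $n\le 3$ directly and start the inductive step at $n\ge 4$, so that $n-2\ge 2$ and \cref{lem:s_properties}~(\ref{lem:s_properties_one}) applies nontrivially. There is no real obstacle beyond checking that property (1) is used at the right index. If one preferred not to rely on it, it follows by its own short induction from $S_m=S_{m-2}S_{m-3}S_{m-2}$.
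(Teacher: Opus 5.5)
Your proposal is correct and follows the paper's own proof: induction via $F_n=F_{n-1}F_{n-2}$, cancelling the common prefix, and applying \cref{lem:s_properties}~(\ref{lem:s_properties_one}) in the form $S_{(n-2)\bmod 2}\prod_{i=0}^{n-4}S_i=S_{n-2}$. Your explicit check of $n\le 3$ is slightly more careful than the paper, whose first inductive step implicitly relies on the formula for $F_0$, which lies outside the stated range.
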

\begin{proof}
    Induction on $n$. The equation holds for $n=1$. Suppose it holds for all $n \leq k$.
    Then,
    \begin{align*}
        F_{k+1} = F_{k}F_{k-1}
        = \left(\prod_{i=0}^{k-2}S_i\right)S_{(k-1)\bmod 2} \left(\prod_{i=0}^{k-3}S_i\right)S_{(k-2)\bmod 2}
        = \left(\prod_{i=0}^{k-1}S_i\right)S_{k\bmod 2}
    \end{align*}
    where the last equation uses
    $S_{(k-1)\bmod 2} \left(\prod_{i=0}^{k-3}S_i\right) = S_{k-1}$ by~\Cref{lem:s_properties}~(\ref{lem:s_properties_one}).
    Therefore, the equation holds for $n=k+1$.
\end{proof}

\begin{corollary}\label{obs:s_i_first_occurrence}
    For any $k\geq 0$,
    $S_k$ occurs uniquely in $S_0\cdots S_k$ and
    the first (leftmost) occurrence of $S_k$ in $F_\infty$ spans the range $[f_{k+1}..f_{k+2}-1]$.
\end{corollary}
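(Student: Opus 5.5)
The corollary has two claims: $S_k$ is unique in $S_0\cdots S_k$, and the leftmost occurrence of $S_k$ in $F_\infty$ is exactly the block $S_k$ in the factorization of \Cref{lem:fib_by_s}. Both will follow from that factorization together with \Cref{lem:s_properties}~(\ref{lem:s_properties_two}).

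\emph{Position of the block.} Take $n$ large, say $n \geq k+3$. By \Cref{lem:fib_by_s}, $F_n$, and hence $F_\infty$ (since $F_n$ is a prefix of $F_\infty$), begins with $S_0 S_1\cdots S_k$. Because $|S_i| = f_i$, the block $S_k$ starts at position $1+\sum_{i=0}^{k-1} f_i$. The standard identity $\sum_{i=0}^{m} f_i = f_{m+2}-1$ follows by induction from $f_0=f_1=1$. Applying it with $m=k-1$, the block starts at $f_{k+1}$. It ends at $f_{k+1}+f_k-1 = f_{k+2}-1$. So $S_k$ does occur in $F_\infty$ over the range $[f_{k+1}..f_{k+2}-1]$.

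\emph{Leftmost occurrence.} Suppose some occurrence of $S_k$ starts at a position $p < f_{k+1}$. Then it ends at $p+f_k-1 < f_{k+2}-1$, so it lies entirely inside the prefix $S_0\cdots S_k$ of length $f_{k+2}-1$. Such an occurrence must either lie inside $\prod_{i=0}^{k-1}S_i$ or end strictly inside the final block $S_k$.

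\begin{itemize}
\item If it lies inside $\prod_{i=0}^{k-1}S_i$, this contradicts \Cref{lem:s_properties}~(\ref{lem:s_properties_two}).
\item Otherwise it ends at a position in $[f_{k+1}..f_{k+2}-2]$. In this case I will use \Cref{lem:s_properties}~(\ref{lem:s_properties_one}) for $k-1$ in the other direction: $S_{k-1}=S_{(k-1)\bmod 2}\prod_{i=0}^{k-3}S_i$, so $S_{k-1}$ is a suffix of $\prod_{i=0}^{k-1}S_i$ (roughly, after regrouping). This puts the occurrence inside the string $S_{k-1}S_k$, starting within $S_{k-1}$ and ending strictly inside $S_k$.
\end{itemize}

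\emph{Main obstacle.} The step I expect to be hardest is ruling out this overlap with $S_{k-1}S_k$ cleanly. \Cref{lem:s_properties}~(\ref{lem:s_properties_three}) only addresses occurrences whose left and right remainders $u_1,u_3$ are both nonempty, and it is phrased for $S_nS_{n+1}$. The occurrence here starts inside $S_{k-1}$ at a nonzero offset, since it is not the block itself, and ends strictly inside $S_k$. Its remainders are therefore a nonempty proper prefix of $S_{k-1}$ and a nonempty proper suffix of $S_k$, which is exactly the form $u_1u_2u_3$ with $n=k-1$. Lemma~\ref{lem:s_properties}~(\ref{lem:s_properties_three}) then gives $u_2\neq S_k$, a contradiction. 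I need to check that the occurrence fits within $S_{k-1}S_k$, i.e.\ starts at or after the start of that suffix $S_{k-1}$. This holds because $|S_k|>|S_{k-1}|$ for $k\ge 2$, and small $k$ are checked by hand.

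\emph{Uniqueness.} Any other occurrence of $S_k$ inside $S_0\cdots S_k$ must start before $f_{k+1}$, since the block ends the string, and the cases above exclude that. Hence $S_k$ is unique in $S_0\cdots S_k$.
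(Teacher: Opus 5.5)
You follow the paper's route: locate the block via \Cref{lem:fib_by_s}, then rule out earlier occurrences with \Cref{lem:s_properties}. However, the case of an occurrence ending strictly inside the final block $S_k$ is not handled correctly.

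You claim such an occurrence lies inside $S_{k-1}S_k$ and starts at a nonzero offset in $S_{k-1}$, citing $|S_k|>|S_{k-1}|$ and ``it is not the block itself''. Neither conclusion follows. An occurrence ending at $e\in[f_{k+1}..f_{k+2}-2]$ starts at $e-f_k+1$. This is smaller than $f_k$, the first position of the block $S_{k-1}$, whenever $e<2f_k-1$. It is precisely because $S_k$ is longer than $S_{k-1}$ that this happens, and it does for every $k\ge 4$. Moreover, $e=2f_k-1$ gives an occurrence starting exactly at $f_k$, where $u_1=\varepsilon$, so \Cref{lem:s_properties}~(\ref{lem:s_properties_three}) does not apply. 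Concretely, for $k=4$ we have $S_0\cdots S_4=\texttt{a}\cdot\texttt{b}\cdot\texttt{aa}\cdot\texttt{bab}\cdot\texttt{aabaa}$. The length-$5$ windows starting at positions $4$ and $5$ end strictly inside $S_4$, but neither has the form $u_1u_2u_3$ with $u_1$ a nonempty proper prefix of $S_3$.

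The missing ingredient is the half of \Cref{lem:s_properties}~(\ref{lem:s_properties_two}) that you never use: $S_{k-1}$ is not a substring of $S_k$. Any occurrence of $S_k$ that starts at or before position $f_k$ and ends inside the block $S_k$ contains the entire block $S_{k-1}$, which is impossible. Only the remaining occurrences, starting strictly inside $S_{k-1}$ and ending strictly inside $S_k$, are excluded by part~(\ref{lem:s_properties_three}).

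This is exactly the paper's three-way split: an occurrence cannot lie inside $\prod_{i=0}^{k-1}S_i$, cannot contain $S_{k-1}$, and cannot cross the $S_{k-1}S_k$ boundary. With that case added, your argument is complete. Your appeal to part~(\ref{lem:s_properties_one}) is unnecessary, since $S_{k-1}$ is literally the last factor of $\prod_{i=0}^{k-1}S_i$.
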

\begin{proof}
    $S_k$ cannot occur in $\prod_{i=0}^{k-1}S_i$ nor
    contain $S_{k-1}$ (\Cref{lem:s_properties}~(\ref{lem:s_properties_two})),
    nor cross the boundary of $S_{k-1}S_k$ (\Cref{lem:s_properties}~(\ref{lem:s_properties_three})).
    The range follows from $|S_k| = f_k$ and $\sum_{i=0}^{k-1}f_i = f_{k+1}-1$.
\end{proof}
\begin{lemma}\label{lem:mus_of_fib}
    For any $n \geq 5$,
    $S_{n-3}$ and $S_{n-2}$ are the (only) MUSs of $F_n$.
\end{lemma}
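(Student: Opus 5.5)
We need to prove that for $n\ge5$, $S_{n-3}$ and $S_{n-2}$ are exactly the MUSs of $F_n$. By \Cref{lem:fib_by_s}, $F_n = S_0S_1\cdots S_{n-2}S_{(n-1)\bmod 2}$. The proof splits into three parts: (i) $S_{n-2}$ and $S_{n-3}$ occur uniquely in $F_n$; (ii) every proper substring of each has at least two occurrences; (iii) no other MUS exists.

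\textbf{(i) Uniqueness.} For $S_{n-2}$, \Cref{obs:s_i_first_occurrence} says its leftmost occurrence in $F_\infty$ is at $[f_{n-1}..f_n-1]$. This occurrence ends at position $f_n-1$, and $|F_n|=f_n$. A second occurrence would start later, at position at least $f_{n-1}+1$, so it would end at $f_n$ or beyond. Ending exactly at $f_n$ would mean $S_{n-2}$ is the length-$f_{n-2}$ suffix of $F_n$, namely $F_n[f_{n-1}+1..f_n]$. That suffix is $F_{n-2}$, which differs from the palindrome $S_{n-2}$: one equals the other shifted by a single position inside $F_{n-2}$ extended by the last letter. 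This follows from the classical form $S_k = F_k[f_k-1]G_kF_k[f_k-1]$ together with $F_k=G_k\Delta_k$, so the last two letters differ. Hence $S_{n-2}$ is unique.

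For $S_{n-3}$, the leftmost occurrence is at $[f_{n-2}..f_{n-1}-1]$. We must rule out a second occurrence inside the suffix $S_{n-3}S_{n-2}S_{(n-1)\bmod 2}$. I would handle the cases as follows:
\begin{itemize}
\item an occurrence crossing the $S_{n-3}|S_{n-2}$ boundary is excluded by \Cref{lem:s_properties}~(\ref{lem:s_properties_three});
\item an occurrence inside $S_{n-2}$ is excluded by \Cref{lem:s_properties}~(\ref{lem:s_properties_two}), since $S_{n-2}$ is a factor of $\prod_{i\le n-2}S_i$; alternatively, expand $S_{n-2}=S_{n-4}S_{n-5}S_{n-4}$ and use (\ref{lem:s_properties_two}) with $S_{n-3}$ not a factor of $S_{n-2}$;
\item an occurrence overlapping the last one or two letters is checked directly, using that $S_{(n-1)\bmod 2}$ is a single letter and by comparing with the suffix $F_{n-2}$ as above.
\end{itemize}

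\textbf{(ii) Minimality.} A proper substring of $S_k$ is a substring of $S_k$ with its first or last letter removed, so it suffices to show $S_k$ minus its first letter, and minus its last letter, each occur twice (by palindromicity, one case suffices). Write $S_k = cG_kc$ with $c$ the letter other than the last letter of $F_k$. Then $G_kc$ is a prefix of $F_k c'$ in some form. Concretely, $G_k$ occurs both as a prefix of $F_n$ and inside the $S_k$ occurrence, and a letter comparison shows $G_kc$ has two occurrences in $F_n$ for $k\in\{n-2,n-3\}$. For $k=n-2$, use \Cref{lem:fib_properties}~(\ref{lem:fib_properties_three}): $F_n = G_{n-2}(F_{n-1})^R\Delta_n$, which exhibits $G_{n-2}$ at the start and again near the end, with the required adjacent letter.

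\textbf{(iii) No other MUS.} This is the main obstacle. Any MUS $w$ occurs uniquely, so its occurrence must contain an occurrence of some unique factor. The plan is to show that every substring of $F_n$ not containing $S_{n-2}$ or $S_{n-3}$ occurs at least twice. Two substrings that do not contain either singular word must both fit inside one of two windows:
\begin{itemize}
\item the window missing the occurrence of $S_{n-3}$;
\item the window cutting the occurrence of $S_{n-2}$.
\end{itemize}
Using $F_n=F_{n-1}F_{n-2}$ and $F_n=F_{n-2}F_{n-3}F_{n-2}\cdots$-type periodicity (period $f_{n-1}$ on a long prefix, period $f_{n-2}$ on a suffix), each such window has a shift producing a second occurrence. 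I would try the period $f_{n-1}$ first: $F_n$ has period $f_{n-1}$ up to position $f_n-2$. Since every unique substring contains a MUS, and every substring avoiding both singular occurrences repeats, any MUS must equal $S_{n-2}$ or $S_{n-3}$ by minimality.
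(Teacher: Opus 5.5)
\textbf{Gap: part (iii), and with it part (ii), is not established.} Your overall skeleton matches the paper's: uniqueness comes from the Wen--Wen properties, and then everything avoiding the two unique occurrences must repeat. But part (iii), which carries the weight, is only a plan, and the plan as stated does not work.

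A substring avoiding both unique occurrences lies in one of \emph{three} maximal windows, not two:
\[F_n[1..f_{n-1}-2],\quad F_n[f_{n-2}+1..f_n-2],\quad F_n[f_{n-1}+1..f_n].\]
The period $f_{n-1}$ of $F_n$ gives a second occurrence of $F_n[i..j]$ only when $j\le f_{n-2}$ or $i\ge f_{n-1}+1$. So it says nothing about, for example, the whole middle window $F_n[f_{n-2}+1..f_n-2]$, or any factor straddling position $f_{n-1}$.

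What is missing is the period $f_{n-2}$ of the prefix $G_n=F_n[1..f_n-2]$. From $F_n=G_{n-1}\Delta_{n-1}F_{n-2}=F_{n-2}G_{n-1}\Delta_n$ (\Cref{lem:fib_properties}) you get $F_n[1..f_{n-1}-2]=F_n[f_{n-2}+1..f_n-2]=G_{n-1}$. Hence the first two windows are two distinct occurrences of the same word. The third window is $F_{n-2}$, which is also a prefix of $F_n$. This is exactly the paper's argument. It also settles (ii) immediately: deleting the first or last letter of the occurrence of $S_{n-3}$ or of $S_{n-2}$ leaves a factor inside one of these windows.

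\textbf{Problems in (ii) and (i) as written.}
\begin{itemize}
\item In (ii), ``by palindromicity, one case suffices'' is invalid. $F_n$ is not a palindrome, so a word and its reverse can occur a different number of times in it; in $F_4=\texttt{abaab}$, $\texttt{ab}$ occurs twice but $\texttt{ba}$ once. Also, for $k=n-3$ the claim is only asserted.
\item In (i), the uniqueness of $S_{n-2}$ is correct. The first two cases for $S_{n-3}$ are fine; for the middle case, simply cite that $S_{n-3}$ is not a factor of $S_{n-2}$.
\item The remaining case in (i) is not ``as above''. The candidate occurrence is the length-$f_{n-3}$ suffix of $F_n$, whereas your earlier comparison was between words of length $f_{n-2}$. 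The clean fix is the paper's: the final letter $S_{(n-1)\bmod 2}$ is the first letter of $S_{n-1}$ by \Cref{lem:s_properties}~(\ref{lem:s_properties_one}). Such an occurrence would therefore strictly cross the $S_{n-2}S_{n-1}$ boundary, which \Cref{lem:s_properties}~(\ref{lem:s_properties_three}) forbids.
\end{itemize}
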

\begin{proof}
    $S_{n-3}$ occurs uniquely in
    $F_n=\left(\prod_{i=0}^{n-2}S_i \right)S_{(n-1)\bmod 2}$;
    It cannot occur after its first occurrence since
    it cannot occur in $S_{n-2}$ (\Cref{lem:s_properties}~(\ref{lem:s_properties_two}))
    nor cross the boundary of
    $S_{n-3}S_{n-2}$ as well as the boundary of $S_{n-2}S_{n-1}$~(\Cref{lem:s_properties}~(\ref{lem:s_properties_three})).
    The latter also implies the uniqueness of $S_{n-2}$ in $F_n$.
    We have
    $S_{n-3} = F_n[f_{n-2}..f_{n-1}-1]$, $S_{n-2} = F_n[f_{n-1}..f_{n}-1]$
    from~\Cref{obs:s_i_first_occurrence},
    and any substring containing them are unique.
    Since
    $F_n = G_{n-1}\Delta_{n-1}F_{n-2} = F_{n-2}G_{n-1}\Delta_{n}$ by Observation~\ref{lem:fib_properties},
    it holds that
    $F_n[1.. f_{n-1}-2] = F_n[f_{n-2}+1.. f_{n}-2] = G_{n-1}$
    and $F_n[f_{n-1}+1..f_n] = F_{n-2}$ are repeating substrings.
    These are the maximal substrings of $F_n$ not containing $S_{n-3}$ nor $S_{n-2}$.
    All substrings of these maximal substrings are repeating as well, and include all proper substrings of $S_{n-3}$ and $S_{n-2}$. Therefore $S_{n-3}$ and $S_{n-2}$ are MUSs.
    Also, since a substring containing a MUS must be unique,
    there can be no other MUSs in~$F_n$.
\end{proof}

\begin{definition}[$\lmus_n$ and $\rmus_n$]\label{def:mus_rng}
    For each $n \geq 5$, define
    $\lmus_n = [f_{n-2} .. f_{n-1}-1]$ and $\rmus_n = [f_{n-1} .. f_{n}-1]$, i.e., the ranges of positions crossed by the MUSs
    $S_{n-3}$ and $S_{n-2}$, respectively.
\end{definition}

\begin{proposition}[\textnormal{\cite[Thm.\ 5]{DBLP:conf/ictcs/MantaciRRRS19}}]\label{prop:fib_smallest_attractor_size}
    The size of the smallest string attractor of $F_n$ is $2$ for $n \geq 2$.
\end{proposition}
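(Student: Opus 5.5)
We prove the two bounds $\gamma(F_n)\ge 2$ and $\gamma(F_n)\le 2$ separately; the upper bound is the substantive part.

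\emph{Lower bound.} For $n\geq 2$ the word $F_n$ contains both symbols \texttt{a} and \texttt{b}. Each of these length-one substrings can only be crossed at a position holding that symbol. A single position carries only one symbol, so no single position is an attractor, and $\gamma(F_n)\ge 2$.

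\emph{Upper bound, small cases.} For $2\le n\le 4$ we check directly that a suitable pair of positions works. For instance, for $F_4=\texttt{abaab}$ the pair $\{2,3\}$ is an attractor.

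\emph{Upper bound, $n\ge 5$.} We show that $\Gamma=\{f_{n-1}-1,f_{n-1}\}$ is an attractor. By \Cref{lem:fib_properties}, $F_n=G_{n-1}\Delta_{n-1}F_{n-2}$, so the two positions of $\Gamma$ are exactly the boundary block $\Delta_{n-1}$. Take an arbitrary substring $w$; we must find an occurrence of $w$ crossing $\Gamma$.

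First suppose $w$ contains one of the MUSs $S_{n-3}$ or $S_{n-2}$ (\Cref{lem:mus_of_fib}). Then its occurrence contains $\lmus_n$ or $\rmus_n$. We have $f_{n-1}\in\rmus_n$, and $f_{n-1}-1\in\lmus_n$ because $f_{n-2}\le f_{n-1}-1$. Hence this occurrence already crosses $\Gamma$.

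Otherwise, by the maximality statement in the proof of \Cref{lem:mus_of_fib}, $w$ is a substring of $G_{n-1}$ or of $F_{n-2}$. We now use the palindromic structure to relocate $w$ onto the boundary. The prefix $G_{n-1}\Delta_{n-1}$ is followed by $F_{n-2}$, which begins with $G_{n-1}$'s prefix. Using \Cref{lem:fib_properties}(\ref{lem:fib_properties_three}), $F_n=G_{n-2}(F_{n-1})^R\Delta_n$, and the factorizations $F_{n-1}=G_{n-1}\Delta_{n-1}$, $F_n=F_{n-2}G_{n-1}\Delta_n$, we show that every factor of $G_{n-1}$ occurs in a window around $\Delta_{n-1}$ that crosses position $f_{n-1}-1$ or $f_{n-1}$. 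The concrete idea is that $G_{n-1}=G_{n-2}\Delta_{n-2}G_{n-3}$ is a palindrome. The standard argument (as in Mantaci et al.) is that every factor of a central word $G$ followed by a letter $x\in\{\texttt{a},\texttt{b}\}$ either occurs in the suffix part or straddles the centre. We do this by induction on $n$: an attractor $\{f_{n-2}-1,f_{n-2}\}$ of $F_{n-1}$, transported via the occurrences of $F_{n-1}$ and its reverse inside $F_n$, covers all short factors. The factors of $F_{n-2}$ are similarly covered, since $F_{n-2}$ also occurs as a prefix of $F_n$ and as the suffix starting at $f_{n-1}+1$.

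\emph{Main obstacle.} The hard step is this relocation: it needs careful index bookkeeping to show that every factor of $G_{n-1}$ or $F_{n-2}$ avoiding the MUSs has an occurrence touching $f_{n-1}-1$ or $f_{n-1}$. If that becomes messy, the fallback is simply to cite \cite[Thm.\ 5]{DBLP:conf/ictcs/MantaciRRRS19}, from which the upper bound follows as stated.
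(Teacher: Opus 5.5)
\textbf{The decisive step is asserted, not proved.} The paper gives no proof of its own here; it cites Mantaci et al., and the introduction explains that their result specialises to $\Gamma_1=\{f_{n-1}-1,f_{n-1}\}$. That citation is your fallback. Your lower bound, your small cases and your MUS case are fine. The key claim for $n\ge5$ is that every factor of $G_{n-1}$ or $F_{n-2}$ has an occurrence meeting $\Gamma$. You only announce it, and the reasons you give around it do not establish it:
\begin{itemize}
\item The two occurrences of $F_{n-2}$ you invoke (the prefix, and the suffix starting at $f_{n-1}+1$) both avoid $\Gamma$, because $f_{n-2}<f_{n-1}-1$ for $n\ge5$. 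So they say nothing about factors of $F_{n-2}$ reaching $\Gamma$.
\item The ``central word'' remark is never made precise.
\item Transporting through the \emph{forward} prefix copy of $F_{n-1}$ only yields $\{f_{n-2}-1,f_{n-2}\}$. That is the pair $\Gamma_2$, which the introduction notes is not an attractor.
\end{itemize}

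\textbf{How to close it.} Your induction does finish the proof once you use only the reversed copy and check the indices.
\begin{itemize}
\item By \cref{lem:fib_properties}, $F_n=G_{n-2}(F_{n-1})^R\Delta_n$. So $(F_{n-1})^R$ occupies $[f_{n-2}-1..f_n-2]$, and position $x$ of $F_{n-1}$ corresponds to position $f_n-1-x$ of $F_n$.
\item The inductive attractor $\{f_{n-2}-1,f_{n-2}\}$ of $F_{n-1}$ is therefore sent exactly onto $\{f_{n-1},f_{n-1}-1\}=\Gamma$. By \cref{fact:att-rev}, every factor of $(F_{n-1})^R$ then has an occurrence in $F_n$ crossing $\Gamma$.
\item An occurrence that misses both adjacent positions of $\Gamma$ lies in $F_n[1..f_{n-1}-2]=G_{n-1}$ or in $F_n[f_{n-1}+1..f_n]=F_{n-2}$.
\item Both are factors of $(F_{n-1})^R=(\Delta_{n-1})^RG_{n-1}$, since $G_{n-1}$ is a palindrome and $G_{n-1}=F_{n-2}G_{n-3}$.
\end{itemize}
With your base case $\{2,3\}$ for $F_4$, this completes the induction, and the MUS case split becomes unnecessary. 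It is the same reflection argument as in the proof of \cref{lem:valid_att_from_flip}. Compared with the paper's bare citation, it gives a self-contained proof and shows directly why $\Gamma_1$, and not $\Gamma_2$, is the right pair.
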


For any $n\geq 5$, a Fibonacci word has exactly two MUSs
and the range of positions crossed by their occurrence are disjoint
(\Cref{obs:s_i_first_occurrence},~\Cref{lem:mus_of_fib},~\Cref{def:mus_rng}).
Since an occurrence of a unique substring must contain an attractor position
and from~\Cref{prop:fib_smallest_attractor_size},
the smallest attractor must consist of one position from each of the two MUS ranges.
\begin{observation}\label{obs:att_mus_rng}
    For $n\geq 5$, $\attof{F_n} \subseteq \lmus_n \otimes \rmus_n$.
\end{observation}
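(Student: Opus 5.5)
The statement to prove is Observation~\ref{obs:att_mus_rng}: for $n\ge 5$, $\attof{F_n}\subseteq \lmus_n\otimes\rmus_n$.

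The argument combines the attractor size from \Cref{prop:fib_smallest_attractor_size} with the MUS structure from \Cref{lem:mus_of_fib}. Fix $n\ge 5$ and any $\Gamma\in\attof{F_n}$.

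\textbf{Step 1: size.} By \Cref{prop:fib_smallest_attractor_size}, $|\Gamma|=2$, so we may write $\Gamma=\{p,q\}$.

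\textbf{Step 2: each MUS range is hit.} By \Cref{lem:mus_of_fib}, $S_{n-3}$ and $S_{n-2}$ are substrings of $F_n$ with a unique occurrence. By \Cref{obs:s_i_first_occurrence} and \Cref{def:mus_rng}, these occurrences span exactly $\lmus_n$ and $\rmus_n$. An attractor must contain a position crossed by some occurrence of each substring. For a unique substring there is only one occurrence, so $\Gamma\cap\lmus_n\neq\emptyset$ and $\Gamma\cap\rmus_n\neq\emptyset$.

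\textbf{Step 3: disjointness.} We have $\lmus_n=[f_{n-2}..f_{n-1}-1]$ and $\rmus_n=[f_{n-1}..f_n-1]$. Since $\max\lmus_n=f_{n-1}-1<f_{n-1}=\min\rmus_n$, the two ranges are disjoint. Hence a single position cannot serve both ranges. The two nonempty intersections from Step~2 must be realized by distinct elements of the two-element set $\Gamma$. Without loss of generality $p\in\lmus_n$ and $q\in\rmus_n$, so $\Gamma=\{p,q\}\in\lmus_n\otimes\rmus_n$.

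No step is a real obstacle once the earlier lemmas are available. The substantive content is already in \Cref{lem:mus_of_fib}, which identifies the MUSs and shows they are unique, and in \Cref{obs:s_i_first_occurrence}, which locates their occurrences exactly. The hypothesis $n\ge5$ is needed only so that \Cref{def:mus_rng} and \Cref{lem:mus_of_fib} apply.
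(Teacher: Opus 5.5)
Your proof is correct and is essentially the paper's own argument. The unique occurrences of the two MUSs $S_{n-3}$ and $S_{n-2}$ occupy the disjoint ranges $\lmus_n$ and $\rmus_n$, each range must contain an attractor position, and since smallest attractors have size $2$ by \Cref{prop:fib_smallest_attractor_size}, exactly one position lies in each range; you simply make the disjointness check $f_{n-1}-1<f_{n-1}$ explicit where the paper leaves it implicit.
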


In~\Cref{subsec:invalid_attractors}, we first characterize a subset
of position pairs in $\lmus_n \otimes \rmus_n$ that cannot be an attractor of $F_n$
by arguments based on the occurrence of singular words in $F_n$.
We then prove in~\Cref{subsec:valid_attractors} that all other position pairs are
an attractor of $F_n$.

\subsection{Invalid Attractor Position Pairs}\label{subsec:invalid_attractors}
Here, we characterize position pairs in $\lmus_n \otimes \rmus_n$ that cannot be attractors of $F_n$ by using occurrences of singular words in $F_n$.
We first show that all occurrences of singular words $S_i$ in $F_n$ can be captured
by its occurrence in our recursive definition~(\Cref{def:singular_words}).
We interpret the recursion as a grammar rule,
and show that each occurrence of singular words in $F_n$
corresponds to a node in the parse tree of this grammar,
where a node in the parse tree is considered to span exactly the interval of positions corresponding to the substring it derives.
    {See Figure~\ref{fig:singular_parse_tree} for a concrete example.}

\begin{figure}[t]
    \centering
    \includegraphics[width=\linewidth]{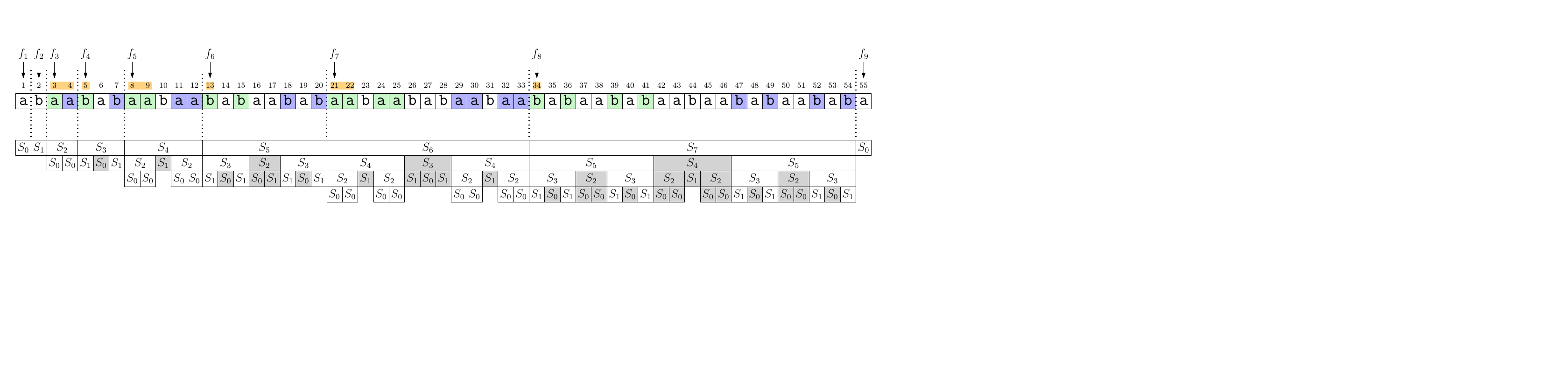}
    \caption{
        Illustration of \cref{lem:s_parse_tree} and \cref{def:l_r_l_prime}.
        Top: $F_9$.
        Bottom: the parse tree of $F_9$ based on singular words.
        For $2 \leq k \leq 7$:
        the gray blocks are descendants of the center child $S_{k-3}$ in the parse tree for $S_k$, i.e. $S_k = S_{k-2}S_{k-3}S_{k-2}$.
        The green and blue blocks correspond to positions in $L_k$ and $R_k$, respectively.
        (Notice that these positions are not crossed by any center child in the parse tree for $S_k$.)
        The positions corresponding to $L'_k$ are highlighted directly in orange.
    }
    \label{fig:singular_parse_tree}
\end{figure}

\begin{lemma}\label{lem:s_parse_tree}
    Consider the equations of~\Cref{lem:fib_by_s} and~\Cref{def:singular_words} as grammar rules that derive
    $F_n$ for $n \geq 1$, i.e.,
    $\{F_n \rightarrow \left(\prod_{i=0}^{n-2} S_i\right) S_{(n-1)\bmod 2},
        S_{-1} \rightarrow \varepsilon,
        S_{0} \rightarrow \texttt{a}$, $S_{1} \rightarrow \texttt{b}\}\cup
        \{ S_i \rightarrow S_{i-2}S_{i-3}S_{i-2} \mid i = 2, \ldots, n-2\}$.
    Then, for any $i\geq 0$, every occurrence of $S_i$ in $F_n$ corresponds to a node in the parse tree of this grammar.
\end{lemma}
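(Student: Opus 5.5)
Fix $i\geq 0$ and an occurrence $F_n[p..p+f_i-1]=S_i$. The plan is to show, by a top-down descent through the parse tree, that this occurrence coincides exactly with the span of some node labelled $S_i$. The descent relies on one structural claim, which I would isolate and prove first, since it does all the work. \emph{Claim:} if $X=X_1X_2\cdots X_m$ is a concatenation of singular words and an occurrence of $S_i$ in $X$ is not contained in a single block $X_j$, then this occurrence equals the span of some block $X_j$ that is itself $S_i$. Here ``not contained'' includes the case where it crosses a block boundary.

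With the Claim in hand, the induction runs as follows. At the root $F_n\rightarrow\left(\prod_{k=0}^{n-2}S_k\right)S_{(n-1)\bmod 2}$, either the occurrence lies inside a single child, or by the Claim it equals a child. In the first case we recurse into that child $S_k\rightarrow S_{k-2}S_{k-3}S_{k-2}$ and apply the same dichotomy again. The occurrence has fixed positive length $f_i$, while the children strictly shrink, so the process terminates. It can only stop when the occurrence exactly equals a node, and that node has length $f_i$ and label $S_i$. Lengths identify labels apart from the case $f_0=f_1$, but then the symbol determines the label.

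To prove the Claim, note that if the occurrence crosses a boundary between blocks it covers a neighborhood of that boundary. The top-level boundaries are the $S_{k-1}S_k$ for consecutive indices, and the last boundary is $S_{n-2}S_{(n-1)\bmod 2}$. The internal boundaries inside a rule are $S_{k-2}S_{k-3}$ and $S_{k-3}S_{k-2}$. For each type, I would show that an occurrence crossing the boundary either violates \Cref{lem:s_properties}~(\ref{lem:s_properties_three}) (or its mirror form, using that singular words are palindromes), or forces $S_i$ to contain a block containing a proper factor of too large an index, contradicting \Cref{lem:s_properties}~(\ref{lem:s_properties_two}).

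More concretely, I would proceed as follows.
\begin{itemize}
\item Working inside a node $S_k=S_{k-2}S_{k-3}S_{k-2}$ with $S_i$ not inside one child, we have $i<k$. If $i\geq k-1$, then $S_i$ contains $S_{k-3}$ fully, or straddles an $S_{k-2}$. These cases are excluded by uniqueness and length arguments from \Cref{obs:s_i_first_occurrence} and \Cref{lem:s_properties}~(\ref{lem:s_properties_two}).
\item If $i\leq k-2$, the occurrence straddles a boundary of the form $S_{k-2}S_{k-3}$, and I want to embed this boundary into a context $S_mS_{m+1}$ to which \Cref{lem:s_properties}~(\ref{lem:s_properties_three}) applies. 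Since $S_{k-3}$ begins with $S_{k-5}$ and $S_{k-2}$ ends with $S_{k-4}$, one gets $S_{k-2}S_{k-3}\supseteq S_{k-4}S_{k-5}$ locally. Reversing, the mirror image is the $S_{k-5}S_{k-4}$ configuration, which fits the lemma. Iterating this reduction lets me push the straddle down the grammar until it becomes an adjacent pair $S_mS_{m+1}$ (or its reverse) with both pieces nonempty proper, so the lemma forbids it. Otherwise the occurrence aligns exactly with a block.
\end{itemize}

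The main obstacle is this boundary case analysis, in particular making the reduction of internal boundaries $S_{k-2}S_{k-3}$ to the form $S_mS_{m+1}$ rigorous. I would first try proving by induction on $k$ the auxiliary statement ``no $S_i$ occurs in $S_k$ except at parse nodes, and none crosses a boundary of $S_kS_{k+1}$ or $S_{k+1}S_k$ improperly.'' The top-level $F_n$ case then follows from \Cref{lem:fib_by_s} together with \Cref{lem:s_properties}~(\ref{lem:s_properties_three}).
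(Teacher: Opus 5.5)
Your overall architecture matches the paper's. You descend from the root until the occurrence no longer lies inside a single child; that node is exactly the lowest common ancestor of its endpoints used in the paper. You then show via \Cref{lem:s_properties} that straddling a child boundary is impossible.

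The gap is at the step you flag as the main obstacle, and the reduction you propose there would not work. You embed the boundary $S_{k-2}|S_{k-3}$ into the local window $S_{k-4}S_{k-5}$ (the suffix of $S_{k-2}$ and the prefix of $S_{k-3}$). Applying \Cref{lem:s_properties}~(\ref{lem:s_properties_three}) to its mirror $S_{k-5}S_{k-4}$ only excludes occurrences \emph{contained in} that window, whose length is $f_{k-4}+f_{k-5}=f_{k-3}$. A hypothetical occurrence of $S_{k-2}$ straddling the boundary is longer than that. Iterating further down the grammar only shrinks the window, so such occurrences are never caught.

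The missing observation is that no reduction is needed. The internal boundaries already join singular words of consecutive index: $S_{k-3}S_{k-2}=S_mS_{m+1}$ with $m=k-3$, and $S_{k-2}S_{k-3}=(S_{k-3}S_{k-2})^R$ because singular words are palindromes. Since $S_i$ is also a palindrome, consider an occurrence of $S_i$ that lies in two adjacent children, crosses their boundary, and neither begins at the first position of the left child nor ends at the last position of the right child. It is excluded by \Cref{lem:s_properties}~(\ref{lem:s_properties_three}) with $n=k-3$, directly or after mirroring. Every other straddling configuration makes the occurrence contain a whole child plus at least one more symbol. If that child is an $S_{k-2}$, then $f_i>f_{k-2}$, contradicting $i\le k-2$. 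If it is the center child, then $f_i>f_{k-3}$ forces $i=k-2$, making $S_{k-3}$ a factor of $S_{k-2}$, which contradicts \Cref{lem:s_properties}~(\ref{lem:s_properties_two}). This is exactly the paper's cases (i)--(iii).

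The root case also needs more than \Cref{lem:fib_by_s} and \Cref{lem:s_properties}~(\ref{lem:s_properties_three}). The latter says nothing about an occurrence running across an entire top-level block. The paper uses \Cref{obs:s_i_first_occurrence}: no occurrence of $S_i$ starts before the block $S_i$. So an occurrence not inside a single block starts strictly inside some block $S_k$ with $k\ge i$. Then $f_i\le f_k$ confines it to $S_kS_{k+1}$ in precisely the configuration forbidden by \Cref{lem:s_properties}~(\ref{lem:s_properties_three}). For the last boundary, note that $S_{(n-1)\bmod 2}$ is the first letter of $S_{n-1}$ by \Cref{lem:s_properties}~(\ref{lem:s_properties_one}), so $S_{n-2}S_{(n-1)\bmod 2}$ is a prefix of $S_{n-2}S_{n-1}$.

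Finally, your Claim has to be restricted to the concatenations produced by the grammar rules. For arbitrary concatenations of singular words it is false: e.g.\ $S_0S_0S_0=\texttt{aaa}$ contains $S_2=\texttt{aa}$ across a block boundary.
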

\begin{proof}
    Due to~\Cref{obs:s_i_first_occurrence},
    there is no occurrence of $S_i$ in $F_n$ before its first (leftmost) occurrence in a node of the parse tree.
    Suppose there is a latter occurrence of $S_i$ that does not correspond to a node in the parse tree of the grammar,
    and consider the lowest common ancestor (LCA) in the parse tree of the leftmost position and rightmost position of the occurrence. From the above observation, if the LCA is $F_n$, this implies that an occurrence of $S_i$ crosses a boundary of $S_kS_{k+1}$ for some $k \geq i$ which violates~\Cref{lem:s_properties}~(\ref{lem:s_properties_three}).
    Therefore, the LCA must be $S_k$ for some $k > i$.
    Furthermore, since $S_i$ does not occur in $S_{i+1}$~(\Cref{lem:s_properties}~(\ref{lem:s_properties_two})),
    the LCA must be $S_k$ for some $k > i+1$, thus, $i \leq k-2$.
    By definition,
    $S_k = S_{k-2}\cdot S_{k-3}\cdot S_{k-2}$.
    Therefore,
    $S_i$ must have an occurrence in $S_k$
    (i) crossing both boundaries between $S_{k-2}$ and $S_{k-3}$, and between $S_{k-3}$ and $S_{k-2}$ (only when $i = k-2$),
    (ii) crossing only the boundary of $S_{k-2}$ and $S_{k-3}$, or
    (iii) crossing only the boundary of $S_{k-3}$ and $S_{k-2}$.
    Case (i) cannot happen since $S_{k-3}$ is not a substring of $S_{k-2}$ (\Cref{lem:s_properties}~(\ref{lem:s_properties_two})).
    Case (ii) directly violates~\Cref{lem:s_properties}~(\ref{lem:s_properties_three}) and Case (iii) also violates~\Cref{lem:s_properties}~(\ref{lem:s_properties_three}) due to the fact that $S_i$ is a palindrome
    and the statement of~\Cref{lem:s_properties}~(\ref{lem:s_properties_three})
    holds for $S_{k}S_{k-1}$ as well.
\end{proof}
In the parse tree of $F_n$ based on the grammar defined in the statement of~\Cref{lem:s_parse_tree},
we say that the occurrence of $S_i$ is a {\em center child} of $S_{i+3}$ if it is derived by the rule $S_{i+3} \rightarrow S_{i+1}S_iS_{i+1}$.
Note that $S_2 \rightarrow S_0S_0$ has no (or an empty) center child.
We say a position $p$ is crossed by a node in the parse tree if the occurrence of the string derived by the node crosses $p$.

\begin{lemma}\label{lem:invalidate_center}
    Let $n \geq 5$ and $\{u,v\} \in \lmus_n \otimes \rmus_n$.
    If $u$ or $v$ is crossed by a center child in the parse tree of some $S_i$, then $\{u,v\}$ is not an attractor of $F_n$.
\end{lemma}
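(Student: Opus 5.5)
Suppose $u$ (or $v$) is crossed by a center child $S_j$ of a node $S_i$ with $i=j+3$, so the node derives $S_{j+2}S_jS_{j+2}$. The plan is to exhibit a substring of $F_n$ none of whose occurrences crosses $u$ or $v$. The natural witness is $S_{i}=S_{j+3}$ itself, or more generally a suitable singular word containing this center child.

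By \Cref{lem:s_parse_tree}, every occurrence of a singular word $S_k$ in $F_n$ is a node of the parse tree. So a position $p$ is crossed by an occurrence of $S_k$ if and only if some parse-tree node labelled $S_k$ spans $p$. The argument then reduces to the parse-tree structure: among the nodes $S_k$, each of their descendants, and each position's chain of ancestors.

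The key steps, in order, are as follows.

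\textbf{Step 1.} Fix the witness. Let $x$ be the bad position (say $u$, the other case being symmetric), crossed by a center child $S_j$ inside a node $N$ labelled $S_{j+3}$. Take as witness the word $S_{j+1}$. It has length $f_{j+1}$, and we may need instead the word $S_{j+2}$ or a MUS-related word; I would test on $F_8$ to pick the right one.

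\textbf{Step 2.} Show that no node labelled with the witness covers $x$. By the grammar, the nodes covering $x$ form a chain of ancestors. Along this chain the labels pass from $S_{j+3}$ down to $S_j$, and since the child of $S_{j+3}$ containing $x$ is the center one, the indices go $j+3 \to j$, skipping $j+1$ and $j+2$. Below $S_j$ all labels are $\le j$, and above $S_{j+3}$ all labels are $\ge j+3$. Hence no node $S_{j+1}$ covers $x$, and no node $S_{j+2}$ covers $x$ either. We must also handle the top-level factorization $F_n\to\prod S_i\,S_{(n-1)\bmod 2}$, whose top children have labels $\ge j+3$ on the chain.

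\textbf{Step 3.} Now handle the other position. Since $u\in\lmus_n$ and $v\in\rmus_n$, the other position lies in the other top-level MUS block, $S_{n-2}$ or $S_{n-3}$. We must find a witness avoided by both positions. I would choose between $S_{j+1}$ and $S_{j+2}$ depending on where the other position sits: if the other position is also avoided by one of these two labels, we are done. Otherwise the other position is covered by both an $S_{j+1}$-node and an $S_{j+2}$-node. In that case I would instead use a longer word: the node $N$ itself, or an ancestor's sibling word, whose only occurrences are forced to lie in the block containing $x$. Because $S_{n-3}$ and $S_{n-2}$ are unique, occurrences of singular words lying inside them are localized, which I expect to help here.

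The main obstacle is Step 3: choosing a single witness that simultaneously avoids both $u$ and $v$. I would first try the witness $S_{j+2}$ together with a counting argument. If the other position lay in an $S_{j+2}$-node, then since a node $S_{j+2}$ always has $S_{j+1}$ as an outer child, a case split on whether that position is in an outer or center child should let me switch to $S_{j+1}$. The small cases $j\le 0$, including the empty center of $S_2$, would be checked directly.
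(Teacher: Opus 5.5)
Your Step~2 is correct, but Step~3 is where the proof has to happen, and as written it is not done. You leave open a case (``the other position is covered by both an $S_{j+1}$-node and an $S_{j+2}$-node'') and propose to handle it with an unspecified longer witness or a counting argument. The one concrete idea you give there rests on a false structural claim. A node $S_{j+2}$ does not have $S_{j+1}$ as an outer child, since $S_{j+2}=S_jS_{j-1}S_j$. So the proposal stops short of a proof exactly at the step you call the main obstacle.

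The missing observation is that the case you worry about never occurs. This follows from the reasoning of your Step~2, applied to an arbitrary position.
By \Cref{lem:s_parse_tree}, the indices $k$ for which a position $y$ is crossed by an occurrence of $S_k$ are exactly the labels on the chain of parse-tree nodes containing $y$, starting from the top-level factor containing $y$. Each step from a node to its child on this chain lowers the index by exactly $2$ (outer child) or $3$ (center child). Hence no position is crossed by occurrences of both $S_k$ and $S_{k+1}$.

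This closes the gap. Let $x$ be the bad position, crossed by the center child $S_j$ of a node $S_{j+3}$; necessarily $j\ge 0$, since the center of $S_2$ is empty. Let $y$ be the other position. Choose $k\in\{j+1,j+2\}$ such that no occurrence of $S_k$ crosses $y$. By Step~2, no occurrence of $S_k$ crosses $x$ either. Also $S_k$ is a substring of $F_n$: the node $S_{j+3}$ lies inside a top-level factor, so $j+3\le n-2$, and $S_k$ is itself one of the top-level factors in \Cref{lem:fib_by_s}. This finishes the proof with no small-case checks.

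The repaired argument is a genuinely different route from the paper's. The paper works top-down from the MUSs: it uses $S_{n-4}$ to force $v$ out of the center child of $S_{n-2}$, then $S_{n-5}$ to force $u$ out of the center child of $S_{n-3}$, and recurses with alternating roles. That recursion relies on $u\in\lmus_n$ and $v\in\rmus_n$. Your local witness argument needs neither the MUS ranges nor the recursion. It even shows that no two-element set containing a center-child position is an attractor of $F_n$.
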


\begin{proof} See~\Cref{fig:invalidate_center}.
    $\lmus_n$ and $\rmus_n$ correspond to the ranges of positions crossed by the MUSs $S_{n-3}$ and $S_{n-2}$ of $F_n$, respectively (\Cref{def:mus_rng}).
    Therefore, an attractor of size $2$ of $F_n$ must contain exactly one position from each of the ranges.
    By definition,
    $S_{n-3} = S_{n-5}S_{n-6}S_{n-5}$
    and
    $S_{n-2} = S_{n-4}S_{n-5}S_{n-4}$.
    Here, $S_{n-4}$ only occurs twice as children of $S_{n-2}$,
    and does not occur elsewhere in the strings $S_{n-3}$ and $S_{n-2}$~(\Cref{lem:s_parse_tree}).
    Therefore, any position crossed by the center child $S_{n-5}$ of $S_{n-2}$ cannot be chosen as an attractor position,
    since it would not be crossed by $S_{n-4}$.
    Similarly, there are three occurrences of $S_{n-5}$; two as children of $S_{n-3}$ and one as a center child of $S_{n-2}$.
    However, since we cannot choose an attractor position in the center child of $S_{n-2}$,
    an attractor position in $S_{n-3}$ must be crossed by $S_{n-5}$.
    Therefore, a position crossed by the center child $S_{n-6}$ of $S_{n-3}$ cannot be chosen as an attractor position.
    The argument can be repeated recursively for (all occurrences of) $S_{n-5}=S_{n-7}S_{n-8}S_{n-7}$ and $S_{n-4}=S_{n-6}S_{n-7}S_{n-6}$, until reaching $S_0$ or $S_1$, proving the lemma.
\end{proof}

\begin{figure}[t]
    \centering
    \includegraphics[width=\linewidth]{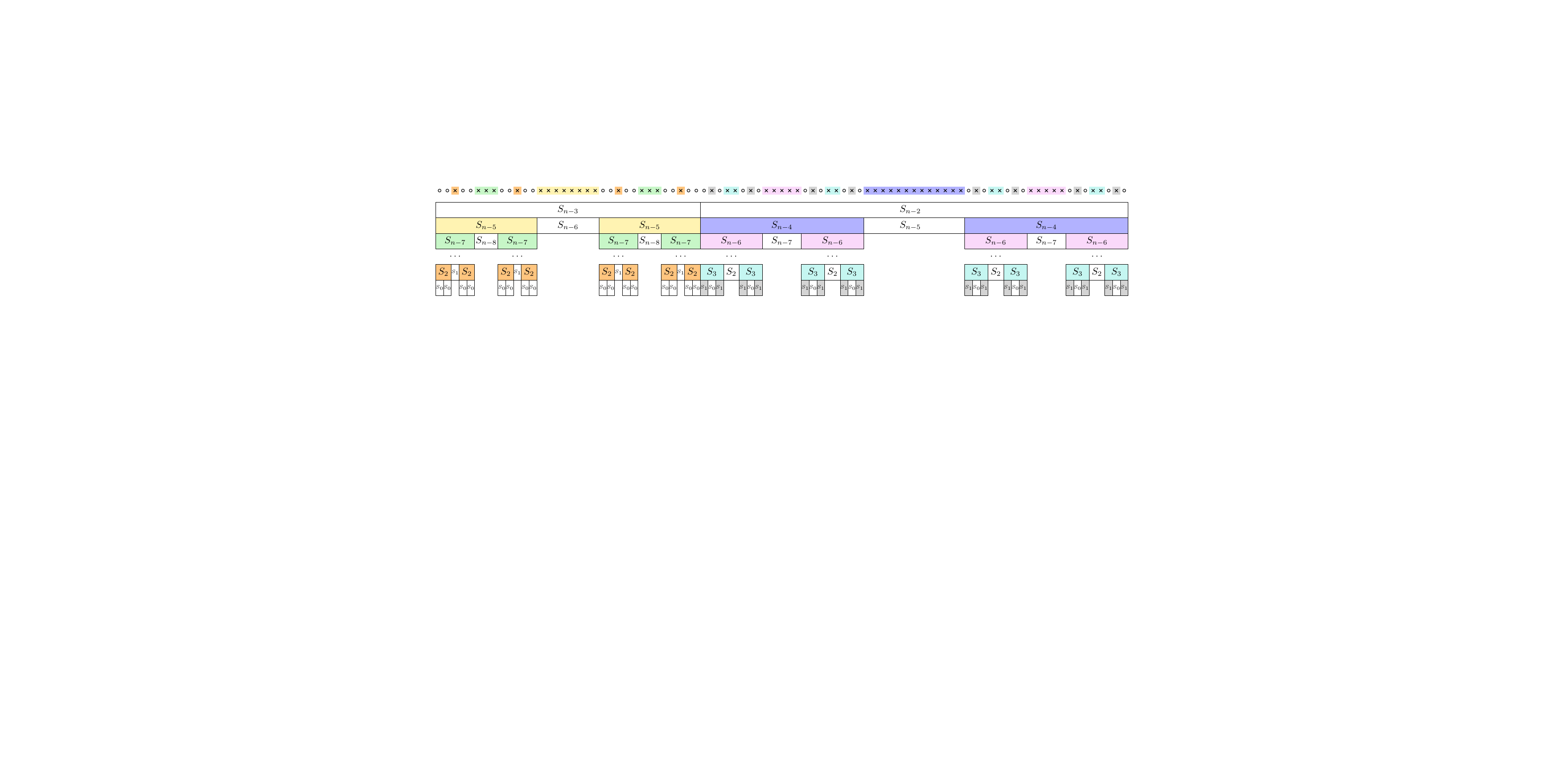}
    \caption{
        Illustration of the proof of \cref{lem:invalidate_center} for $n=11$.
        Circles and crosses at the top mark valid and invalid attractor positions, respectively.
        A cross highlighted in a given color indicates that the position is invalid because it fails to be crossed by the substring shown in the same color.
        (Note that some combinations of the positions marked by circles in $S_{n-3}$ and $S_{n-2}$ are still invalid; these are characterized in \cref{lem:invalidate_second_half_of_R}.)
    }
    \label{fig:invalidate_center}
\end{figure}

\begin{lemma}\label{lem:invalidate_second_half_of_R}
    Let $n \geq 7$ and $\{u,v\} \in \lmus_n \otimes \rmus_n$.
    If $u \in \lmus_n\setminus \{\min \lmus_n, \min \lmus_n+1\}$ and $v\in \rmus_n$ is in the right child of the first occurrence of $S_{n-2}$, then $\{u,v\}$ is not an attractor of $F_n$.
\end{lemma}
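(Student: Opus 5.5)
The plan is to exhibit one substring $w$ of $F_n$ none of whose occurrences crosses $u$ or $v$. The natural candidate is
\[
  w \;=\; S_{n-4}\cdot S_{n-5}[1..2],
\]
that is, the left child $S_{n-4}$ of $S_{n-2}$ followed by the first two symbols of the center child $S_{n-5}$. Because $n\geq 7$, we have $|S_{n-5}| = f_{n-5} \geq 2$, so $w$ is well defined. It occurs at position $f_{n-1} = \min \rmus_n$, since $S_{n-2} = S_{n-4}S_{n-5}S_{n-4}$.

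First I will determine all occurrences of $S_{n-4}$ in $F_n$. By \Cref{lem:s_parse_tree}, each occurrence is a node of the parse tree of $F_n = \left(\prod_{i=0}^{n-2}S_i\right)S_{(n-1)\bmod 2}$. I then go through the nodes labelled $S_{n-4}$.
\begin{itemize}
\item The top-level factors $S_0,\dots,S_{n-5}$ have smaller index, and the final factor $S_{(n-1)\bmod 2}$ is a single symbol, so none of them contains an $S_{n-4}$ node.
\item The top-level factor $S_{n-4}$ itself is one occurrence, spanning $[f_{n-3}..f_{n-2}-1]$ by \Cref{obs:s_i_first_occurrence}.
\item $S_{n-3} = S_{n-5}S_{n-6}S_{n-5}$ has only descendants of index at most $n-5$, so it contains no $S_{n-4}$ node.
\item $S_{n-2}$ contributes exactly its left and right children.
\end{itemize}
Hence $S_{n-4}$ has exactly three occurrences.

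Next I check which of these extend to an occurrence of $w$.
\begin{itemize}
\item The right child of $S_{n-2}$ is followed by only one symbol, $S_{(n-1)\bmod 2}$, so it cannot be extended by two symbols. It gives no occurrence of $w$.
\item The left child of $S_{n-2}$ is followed by $S_{n-5}$, so it gives the occurrence $[f_{n-1}..f_{n-1}+f_{n-4}+1]$. This lies inside the left child together with the first two positions of the center child of $S_{n-2}$. It does not meet $\lmus_n$, and it ends before the right child of $S_{n-2}$, so it crosses neither $u$ nor $v$.
\item The top-level $S_{n-4}$ is followed by $S_{n-3} = S_{n-5}S_{n-6}S_{n-5}$, whose first two symbols are $S_{n-5}[1..2]$. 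So $w$ occurs at $[f_{n-3}..f_{n-2}+1]$. This occurrence meets $\lmus_n$ only in $\{\min\lmus_n, \min\lmus_n+1\}$, which by assumption excludes $u$. It lies entirely before $\rmus_n$, so it does not cross $v$ either.
\end{itemize}
Every occurrence of $w$ must begin with an occurrence of $S_{n-4}$, so these two are all the occurrences of $w$. Neither crosses $u$ or $v$, so $\{u,v\}$ is not an attractor.

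The only delicate step is the exhaustive enumeration of the occurrences of $S_{n-4}$. The danger is an occurrence hidden inside $S_{n-3}$ or straddling a boundary between factors. \Cref{lem:s_parse_tree} excludes this, but I will explicitly verify that no node labelled $S_{n-4}$ appears below $S_{n-3}$ or below $S_0,\dots,S_{n-5}$, since all their descendants have strictly smaller index. The bound $n\geq 7$ is used exactly to guarantee $f_{n-5}\geq 2$, so that the two appended symbols exist and fit inside $S_{n-5}$.
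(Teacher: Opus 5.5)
Your proof is correct and follows essentially the same route as the paper. Your witness $S_{n-4}S_{n-5}[1..2]$ is exactly the paper's $S_{n-4}(S_{n-3}[1..2])$, since $S_{n-3}$ begins with $S_{n-5}$ and $f_{n-5}\geq 2$, and your argument matches the paper's: three occurrences of $S_{n-4}$ via \Cref{lem:s_parse_tree}, the last one cut off by the end of $F_n$, the other two missing $u$ and $v$, with $n\geq 7$ used for the same reason.
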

\begin{proof}
    See~\Cref{fig:invalid-att-2nd-half-r}.
    Consider the occurrences of
    $S_{n-4}(S_{n-3}[1..2])$ in $F_n$.
    Due to~\Cref{lem:s_parse_tree}, $S_{n-4}$ has only three occurrences in $S_{n-4}S_{n-3}S_{n-2}S_{(n-1)\bmod 2}$, and thus in $F_n$,
    two of which are the left and right child of $S_{n-2}=S_{n-4}S_{n-5}S_{n-4}$.
    Since the last occurrence is only followed by a single symbol $S_{(n-1)\bmod 2}$,
    it follows that there can only be two occurrences of $S_{n-4}(S_{n-3}[1..2])$ in~$F_n$.
    Therefore,
    for an attractor position to be crossed by $S_{n-4}(S_{n-3}[1..2])$,
    if one of the first two positions in $\lmus_n$ (corresponding to the first two positions of $S_{n-3}$)
    is not chosen as an attractor position,
    then, we cannot choose a position in the right child of $S_{n-2}$ for the other attractor position.
    Here, notice that we require $n\geq 7$, since otherwise, the second occurrence of $S_{n-4}(S_{n-3}[1..2])$ crosses
    the boundary of the center and right children of $S_{n-2}$.
\end{proof}

\begin{figure}[t]
    \centering
    \includegraphics[width=0.72\linewidth]{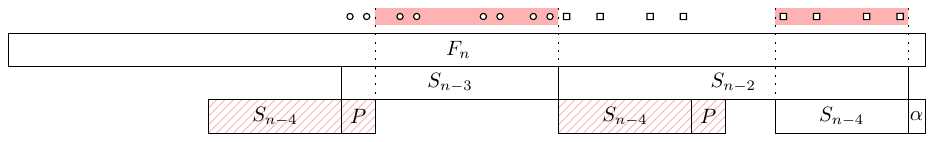}
    \caption{
        Illustration of the proof of \cref{lem:invalidate_second_half_of_R} for $n=9$,
        where $P = S_{n-3}[1..2]$ and $\alpha = S_{(n-1)\bmod 2}$.
        Circles (resp.\ squares) at the top mark positions in $\lmus_n$ (resp.\ $\rmus_n$) that are not ruled out by \cref{lem:invalidate_center}.
        A circle-square pair in $\lmus_n \otimes \rmus_n$ is invalid if they are both in a red region
        since they fail to be crossed by $S_{n-4} \ P$.
        (All other circle-square pairs are indeed valid, which we prove in \cref{subsec:valid_attractors}.)
    }
    \label{fig:invalid-att-2nd-half-r}
\end{figure}

\subsection{Valid Attractor Position Pairs}\label{subsec:valid_attractors}
In this subsection, we
show that the position pairs in $\lmus_n\otimes \rmus_n$ that are not invalidated
by~\Cref{lem:invalidate_center} nor~\Cref{lem:invalidate_second_half_of_R}
are indeed attractors of $F_n$.

\begin{definition}[$L_k$, $R_k$, and $L'_k$]\label{def:l_r_l_prime}
    For $k \geq 2$,
    we define three subsets of positions within the first occurrence of $S_k$;
    let $u_k$ be the node in the parse tree corresponding to this occurrence.
    Let $L_k$ be the set of positions in the left child of $u_k$ that are \underline{not} crossed by any center child.
    Similarly, let $R_k$ be the set of positions in the right child of $u_k$ that are \underline{not} crossed by any center child.
    Further, let $L'_k$ be the set of positions corresponding to
    the occurrence of $S_{2-(k \bmod 2)}$
    in the leftmost path in the parse tree rooted at $u_k$ ($u_k$ itself when $u_k=S_2$).
\end{definition}
For example,
$L_6 = \{21,22,24,25\}$,
$R_6 = \{ 29,30,32,33\}$,
$L'_6 = \{ 21, 22 \}$.
See~\Cref{fig:singular_parse_tree}.

\begin{observation}
    $L_2 = \{ 3 \}$, $R_2 = \{ 4 \}$, $L'_2 = \{ 3, 4 \} $,
    $L_3 = \{ 5 \}$, $R_3 = \{ 7 \}$, $L'_3 = \{ 5 \} $, and
    \begin{align}
        L_k  & = (L_{k-2}\cup R_{k-2}) \oplus f_{k},\label{eqn:l_n}                        \\
        R_k  & = (L_{k-2}\cup R_{k-2}) \oplus f_{k+1} = L_k \oplus f_{k-1},\label{eqn:r_n} \\
        L'_k & = L'_{k-2}\oplus f_k\label{eqn:l_n_prime_recursion}.
    \end{align}
    for $k \geq 4$.
    For $k \geq 2$,
    \begin{align}
        L'_k =\begin{cases}
                  \{ f_{k+1}, f_{k+1}+1 \} & \text{~if $k$ is even}, \\
                  \{ f_{k+1} \}            & \text{~if $k$ is odd}.
              \end{cases}\label{eqn:l_n_prime}
    \end{align}
\end{observation}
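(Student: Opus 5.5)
The plan is to read all the formulas off the position of the first occurrence of $S_k$ and the shape of its parse subtree. The key facts are:
\begin{enumerate}[(a)]
\item By \Cref{obs:s_i_first_occurrence}, the first occurrence $u_k$ of $S_k$ spans $[f_{k+1}..f_{k+2}-1]$.
\item The rule $S_k\rightarrow S_{k-2}S_{k-3}S_{k-2}$ places the left child of $u_k$ at $[f_{k+1}..f_{k+1}+f_{k-2}-1]$ and the center child right after it. The right child then starts at $f_{k+1}+f_{k-2}+f_{k-3}=f_{k+1}+f_{k-1}$.
\item Every node labelled $S_j$ derives its subtree by the same rules. So the subtree below any occurrence of $S_j$ is a translate of the subtree below the first occurrence $u_j$, and "crossed by a center child inside this subtree" is translation invariant.
\end{enumerate}

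\textbf{Base cases.} I will check these directly. $S_2=S_0S_{-1}S_0=\texttt{aa}$ sits at $[f_3..f_4-1]=[3..4]$. Its center child is empty, so $L_2=\{3\}$, $R_2=\{4\}$, and $L'_2$, the node $u_2$ itself, is $\{3,4\}$. Next, $S_3=S_1S_0S_1=\texttt{bab}$ sits at $[5..7]$ with center child at $6$. This gives $L_3=\{5\}$ and $R_3=\{7\}$. The leftmost path of $u_3$ reaches $S_1$ at position $5$, so $L'_3=\{5\}$.

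\textbf{Recursions for $k\ge 4$.} The left child of $u_k$ is an occurrence of $S_{k-2}$ starting at $f_{k+1}$, while $u_{k-2}$ starts at $f_{k-1}$; the shift is $f_{k+1}-f_{k-1}=f_k$.
\begin{itemize}
\item \emph{Left child.} The left child is not a center child, so a position in it is crossed by a center child exactly when some center child strictly inside its subtree crosses it. By (c), these center children are the translates of those below $u_{k-2}$. Inside $u_{k-2}$ the positions avoiding all center children are: its left-child positions avoiding center children, $L_{k-2}$; its right-child positions avoiding them, $R_{k-2}$; and nothing from its own center child. Hence $L_k=(L_{k-2}\cup R_{k-2})\oplus f_k$.
\item \emph{Right child.} The same argument applies with shift $f_{k+1}+f_{k-1}-f_{k-1}=f_{k+1}$. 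This gives $R_k=(L_{k-2}\cup R_{k-2})\oplus f_{k+1}=L_k\oplus f_{k-1}$.
\item \emph{Leftmost path.} The leftmost path of $u_k$ continues as the leftmost path of its left child, which is the translate by $f_k$ of the leftmost path of $u_{k-2}$. Since $k\equiv k-2 \pmod 2$, the target node $S_{2-(k\bmod 2)}$ is the same. Hence $L'_k=L'_{k-2}\oplus f_k$.
\end{itemize}

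\textbf{Closed form for $L'_k$.} I will use induction on $k$ in steps of two, with the base cases above ($f_3=3$, $f_4=5$). The step is $f_{k-1}+f_k=f_{k+1}$.

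\textbf{Main obstacle.} The only delicate point is justifying (c) together with the claim that a position inside a child is never "crossed by a center child" through some ancestor. Ancestors of the left and right children of $u_k$ are either $u_k$ itself, whose center child is disjoint from them, or nodes above $u_k$. Since $u_k$ is a factor of $F_n\rightarrow\prod S_i\,S_{(n-1)\bmod 2}$, those higher nodes are never center children. So only center children inside the child's subtree matter.
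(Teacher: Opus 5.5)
Your proposal is correct and follows the paper's argument. The paper likewise obtains the recursions from the offset $f_k$ between the first occurrences of $S_{k-2}$ and $S_k$, and from the offset $f_{k-1}$ between the two copies of $S_{k-2}$ in $S_k$. It reads the first offset off the factorization of \Cref{lem:fib_by_s} as $|S_{k-2}S_{k-1}|$, whereas you compute it as $f_{k+1}-f_{k-1}$ via \Cref{obs:s_i_first_occurrence}. Your justification that only center children inside a child's subtree matter, because $u_k$ and the root are not center children, is a detail the paper leaves implicit.
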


\begin{proof}
    Equations~(\ref{eqn:l_n})~and~(\ref{eqn:l_n_prime_recursion}) hold because, by \cref{lem:fib_by_s}, in the factorization
    $
        F_n = S_0  S_1  \cdots \underline{S_{k-2}}  S_{k-1}  \underline{S_k}  \cdots S_{n-2}  S_{(n-1) \bmod 2}
    $,
    the offset between the starting positions of the first occurrences of $S_{k-2}$ and $S_k$ is $|S_{k-2}S_{k-1}|=f_k$.
    \cref{eqn:l_n_prime} follows directly from \cref{eqn:l_n_prime_recursion}.
    \cref{eqn:r_n} holds because, in $S_k = S_{k-2}S_{k-3}S_{k-2}$, the offset between the starting positions of the two occurrences of $S_{k-2}$ is $|S_{k-2}S_{k-3}| = f_{k-1}$.
\end{proof}

The following is straightforward from the definitions and the above observation.
\begin{observation}\label{obs:l_r_min_max}
    For $k\geq 2$,
    $\min L_k = f_{k+1}$,
    $\max L_k = 2f_{k}-1$,
    $\min R_k = f_{k+1}+f_{k-1}$, and
    $\max R_k = f_{k+2}-1$.
\end{observation}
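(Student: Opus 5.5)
We prove the four identities of \cref{obs:l_r_min_max} by induction on $k$, using the recursions (\ref{eqn:l_n}) and (\ref{eqn:r_n}) together with the base values. The base cases $k=2,3$ are direct checks. For $k=2$ we have $L_2=\{3\}$, $R_2=\{4\}$, and $f_3=3$, $2f_2-1=3$, $f_3+f_1=4$, $f_4-1=4$. For $k=3$ we have $L_3=\{5\}$, $R_3=\{7\}$, and $f_4=5$, $2f_3-1=5$, $f_4+f_2=7$, $f_5-1=7$.

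For the inductive step, let $k\geq 4$. By (\ref{eqn:l_n}),
\[
\min L_k=\min(L_{k-2}\cup R_{k-2})+f_k .
\]
By induction, $\min L_{k-2}=f_{k-1}$ and $\min R_{k-2}=f_{k-1}+f_{k-3}$, so the minimum of the union is $f_{k-1}$. Hence $\min L_k=f_{k-1}+f_k=f_{k+1}$.

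Similarly, $\max L_k=\max(L_{k-2}\cup R_{k-2})+f_k$. The largest element of the union is $\max R_{k-2}=f_k-1$. This exceeds $\max L_{k-2}=2f_{k-2}-1$ because $f_k=f_{k-1}+f_{k-2}>2f_{k-2}$ for $k\ge 4$. Therefore $\max L_k=2f_k-1$.

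By (\ref{eqn:r_n}), $R_k=L_k\oplus f_{k-1}$, so a shift gives the other two identities:
\[
\min R_k=f_{k+1}+f_{k-1},\qquad \max R_k=2f_k-1+f_{k-1}=f_k+f_{k+1}-1=f_{k+2}-1 .
\]

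The only point needing care is the comparison of $\max L_{k-2}$ with $\max R_{k-2}$ (and similarly of the minima). It follows from the Fibonacci inequalities, or more directly from the fact that every position of the right child lies after every position of the left child. As a cross-check, the identities also follow from the definitions. The left child of the first occurrence of $S_k$ starts at $f_{k+1}$ by \cref{obs:s_i_first_occurrence}, and its first and last positions are never covered by a center child, because center children are strictly internal. The left child ends at $f_{k+1}+f_{k-2}-1=2f_k-1$. The right child of $S_k$ begins at $f_{k+1}+f_{k-1}$ and ends at $f_{k+2}-1$.
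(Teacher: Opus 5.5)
Your proof is correct and is essentially the paper's argument made explicit. The paper only remarks that the identities are straightforward from the definitions and the recursions $L_k = (L_{k-2}\cup R_{k-2})\oplus f_k$, $R_k = L_k\oplus f_{k-1}$; your induction on these recursions from the base cases $k=2,3$, together with your direct check that the endpoints of the two outer children of $S_k$ are never covered by a center child, fills in exactly those details.
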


The following holds since all singular words are palindromes, and the parse tree rooted at each singular word is symmetric.
\begin{observation}\label{obs:l_r_symmetric}
    For any $k\geq 2$, $R_k = (f_{k+3}-1)\ominus L_k$,
    $L_{k+2} = (f_{k+4}-1)\ominus (L_{k}\cup R_{k})$.
\end{observation}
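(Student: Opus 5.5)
The plan is to get both identities from a single reflection map. By \Cref{obs:s_i_first_occurrence}, the first occurrence of $S_k$ spans $[f_{k+1}..f_{k+2}-1]$. Reflecting this interval about its center sends $p$ to $f_{k+1}+f_{k+2}-1-p = f_{k+3}-1-p$. Call this map $\rho_k$, so $\rho_k(X) = (f_{k+3}-1)\ominus X$.

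\emph{Step 1: the parse tree is mirror-symmetric.} Define the mirror of a parse tree rooted at $S_j$ as the tree with children reversed at every node, positions reflected within the span. I will prove by induction on $j$ that the parse tree rooted at any occurrence of $S_j$ equals its own mirror. The base cases $S_{-1},S_0,S_1$ are trivial. For $j\geq 2$, use the rule $S_j\to S_{j-2}S_{j-3}S_{j-2}$. Mirroring swaps the two $S_{j-2}$ subtrees, and since they are identical, each is mapped onto the other reflected, which by induction gives the same subtree. The center $S_{j-3}$ subtree is mapped onto itself, reflected, which again is the same subtree by induction. Two consequences follow.
\begin{itemize}
\item The node $u_k$'s left child is mapped by $\rho_k$ onto its right child.
\item The set of positions crossed by center children inside $u_k$'s subtree is invariant under $\rho_k$, since a center child is mapped to a center child of the mirrored node, i.e.\ of the corresponding node.
\end{itemize}
Hence $\rho_k(L_k)=R_k$, which is the first identity.

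\emph{Step 2: the second identity.} Let $X=L_k\cup R_k$. By Step 1, $\rho_k(X)=X$, i.e.\ $X=(f_{k+3}-1)\ominus X$. Applying \eqref{eqn:l_n} with index $k+2\geq 4$ gives
\[
L_{k+2}=X\oplus f_{k+2} = \{\,f_{k+3}-1-x+f_{k+2} : x\in X\,\} = (f_{k+4}-1)\ominus X,
\]
as required.

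\emph{Main obstacle.} The only step that needs care is Step 1's claim that center children map to center children under reflection. I would state the mirror-invariance precisely as a statement about labeled node intervals and prove it by the induction above, rather than only appealing to the fact that $S_k$ is a palindrome. String palindromicity alone does not say anything about the parse nodes.
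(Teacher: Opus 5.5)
Your proof is correct and takes the same approach as the paper, which justifies this observation in one line: singular words are palindromes and the parse tree rooted at each $S_k$ is symmetric. You make that symmetry precise by induction on the rule $S_j \rightarrow S_{j-2}S_{j-3}S_{j-2}$, and you obtain the second identity from the $\rho_k$-invariance of $L_k\cup R_k$ together with Equation~(\ref{eqn:l_n}), which faithfully spells out the paper's argument.
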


Using the sets of \cref{def:l_r_l_prime}, the results of \cref{subsec:invalid_attractors}
can be summarized as follows.

\begin{corollary}\label{cor:attoffib_contained_in_formula}
    For $n \geq 7$,
    $\attof{F_n} \subseteq ((L_{n-3}\cup R_{n-3})\otimes L_{n-2})\cup(L'_{n-3}\otimes R_{n-2})$.
\end{corollary}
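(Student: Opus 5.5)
Fix an arbitrary $\{u,v\}\in\attof{F_n}$. By \Cref{obs:att_mus_rng} we may take $u\in\lmus_n$ and $v\in\rmus_n$, and we then filter these pairs using \Cref{lem:invalidate_center} and \Cref{lem:invalidate_second_half_of_R}. The plan is to show two things. First, the positions of $\lmus_n$ not crossed by any center child form exactly $L_{n-3}\cup R_{n-3}$. Second, the positions of $\rmus_n$ not crossed by any center child form exactly $L_{n-2}\cup R_{n-2}$. After that we handle the pairs whose second coordinate lies in $R_{n-2}$.

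\textbf{Step 1 (identify the ranges).} By \Cref{obs:s_i_first_occurrence}, $\lmus_n$ and $\rmus_n$ are exactly the spans of the first occurrences of $S_{n-3}$ and $S_{n-2}$. These are the nodes $u_{n-3}$ and $u_{n-2}$ of the parse tree of \Cref{lem:s_parse_tree}. Each such node has children $S_{k-2}S_{k-3}S_{k-2}$. Its own center child is one of the center children of the tree, and every center child inside the subtree lies within one of the three children. The positions of $u_k$ not crossed by any center child therefore lie in the left or right child. By \Cref{def:l_r_l_prime}, these positions are exactly $L_k\cup R_k$. (A center child of an ancestor of $u_k$ cannot intersect $u_k$, since $u_k$ is a child of the root $F_n$.) Applying \Cref{lem:invalidate_center} gives $u\in L_{n-3}\cup R_{n-3}$ and $v\in L_{n-2}\cup R_{n-2}$.

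\textbf{Step 2 (split on $v$).} If $v\in L_{n-2}$, the pair already lies in $(L_{n-3}\cup R_{n-3})\otimes L_{n-2}$. Otherwise $v\in R_{n-2}$, which lies in the right child of the first occurrence of $S_{n-2}$. Since $n\ge 7$, \Cref{lem:invalidate_second_half_of_R} forces $u\in\{\min\lmus_n,\min\lmus_n+1\}=\{f_{n-2},f_{n-2}+1\}$. It remains to intersect this with $L_{n-3}\cup R_{n-3}$ and compare the result with $L'_{n-3}$, using \cref{eqn:l_n_prime}. Here $\min L_{n-3}=f_{n-2}$ by \Cref{obs:l_r_min_max}, and $L'_{n-3}$ is either $\{f_{n-2},f_{n-2}+1\}$ or $\{f_{n-2}\}$, according to the parity of $n-3$.

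\textbf{Main obstacle.} The delicate point is the odd case, $n-3$ odd, where we must show $f_{n-2}+1\notin L_{n-3}\cup R_{n-3}$. The leftmost path of $u_{n-3}$ descends through left children with indices of the same parity, down to $S_1=\texttt{b}$ at position $f_{n-2}$. The next position, $f_{n-2}+1$, is the first position of the center child of the parent $S_3=S_1S_0S_1$. That center child is $S_0$, so the position is crossed by a center child and is excluded. In the even case, the leftmost path ends at $S_2=S_0S_0$, which has no center child. Then both $f_{n-2}$ and $f_{n-2}+1$ lie in $L_{n-3}$ and match $L'_{n-3}$ exactly. I would justify both cases by the recursion \cref{eqn:l_n_prime_recursion}, together with the small base cases $L_2$, $L_3$ and $L'_2$, $L'_3$. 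Combining the two cases of Step 2 yields the stated inclusion.
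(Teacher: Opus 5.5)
Your proposal is correct and follows the paper's proof essentially step for step. You use \Cref{lem:invalidate_center} to restrict to $(L_{n-3}\cup R_{n-3})\otimes(L_{n-2}\cup R_{n-2})$, then \Cref{lem:invalidate_second_half_of_R} to force $u\in\{f_{n-2},f_{n-2}+1\}$ when $v\in R_{n-2}$, and in the odd case you exclude $f_{n-2}+1$ because it lies in the center child $S_0$ of an $S_3$ on the leftmost path. You are more explicit than the paper about why the non-center positions of the two MUS ranges are exactly $L_k\cup R_k$, but the argument is the same.
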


\begin{proof}
    \Cref{lem:invalidate_center} implies that an attractor position pair must be
    in $(L_{n-3}\cup R_{n-3})\otimes (L_{n-2}\cup R_{n-2})$,
    where $L_{n-3}\cup R_{n-3}\subseteq \lmus_n$ and $L_{n-2}\cup R_{n-2} \subseteq \rmus_n$.
    Furthermore,~\Cref{lem:invalidate_second_half_of_R} implies
    that if one of the attractor positions is in $R_{n-2}$, then the other position must be
    in the set $\{\min \lmus_{n}, \min \lmus_{n}+1\} = \{f_{n-2}, f_{n-2}+1 \}$.
    If $n-3$ is even, this set is exactly $L'_{n-3}$.
    If $n-3$ is odd, $L'_{n-3}$ only includes $f_{n-2}$, but then $f_{n-2}+1$ is a center child and cannot be an attractor position by~\Cref{lem:invalidate_center}.
    Therefore, the other position will be in $L'_{n-3}$.
    These imply that $\attof{F_n} \subseteq ((L_{n-3}\cup R_{n-3})\otimes L_{n-2})\cup(L'_{n-3}\otimes R_{n-2})$.
\end{proof}

\noindent
Below, we show that all position pairs in this set are actually attractors of $F_n$.

\begin{theorem}\label{thm:all_smallest_attractors_of_fib}
    For any $n \geq 7$,
    $\attof{F_n} = ((L_{n-3}\cup R_{n-3})\otimes L_{n-2})\cup(L'_{n-3}\otimes R_{n-2})$.
\end{theorem}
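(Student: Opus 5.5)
One inclusion is exactly \Cref{cor:attoffib_contained_in_formula}. By \Cref{prop:fib_smallest_attractor_size}, every attractor of size two is a smallest one. So it remains to show that every pair in $((L_{n-3}\cup R_{n-3})\otimes L_{n-2})\cup(L'_{n-3}\otimes R_{n-2})$ is an attractor of $F_n$. The figure caption already hints at the intended decomposition: three families of pairs, handled by three lemmas (``flip'', ``flip with offset'', ``from $n-2$ with offset''), followed by a recursion on $n$. I would prove the theorem by strong induction on $n$, verifying the base cases $n=7,8$ (and perhaps $9$) by direct computation, as in \Cref{fig:att-example-n=8}.

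For the inductive step, I would use the decompositions $F_n=F_{n-1}F_{n-2}=F_{n-2}G_{n-1}\Delta_n$ and $F_n=G_{n-2}(F_{n-1})^R\Delta_n$ from \Cref{lem:fib_properties}. Together with \Cref{fact:att-rev}, these transport attractors of $F_{n-1}$ (and of $F_{n-2}$) into candidate attractors of $F_n$. The key tool would be a transfer lemma of the following form. Suppose $T=XY$, where every substring of $T$ crossing the boundary between $X$ and $Y$ has an occurrence inside some designated factor $Z$. If $\Gamma$ is an attractor of $Z$, then the shifted $\Gamma$ is an attractor of $T$, provided the substrings of $T$ not occurring in $Z$ are still hit.

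Concretely, $F_n$ contains $(F_{n-1})^R$ as the factor starting at position $f_{n-2}-1$. All substrings of $F_n$ of length at most about $f_{n-2}$ occur in it, and the remaining longer substrings contain a MUS. Hence, for an attractor $\{p,q\}$ of $F_{n-1}$, the mirrored and shifted pair $\{f_{n-2}-2+f_{n-1}-p+1,\ \ldots\}$ is a candidate for $F_n$. I would then check that the long substrings, which necessarily contain $S_{n-3}$ or $S_{n-2}$, are crossed. By \Cref{obs:l_r_symmetric} and the recurrences \eqref{eqn:l_n}--\eqref{eqn:l_n_prime}, this ``flip'' map sends the sets $L_{n-4}\cup R_{n-4}$, $L_{n-3}$, $R_{n-3}$ and $L'_{n-4}$ for $F_{n-1}$ onto pieces of $L_{n-3}\cup R_{n-3}$ and $L_{n-2}$ for $F_n$. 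A second shift by $f_{n-1}$, using $R_k=L_k\oplus f_{k-1}$, gives the ``flip with offset'' family. An offset of $f_n$-type applied to attractors of $F_{n-2}$, using \eqref{eqn:l_n}, gives the third family, which in particular covers the $L'_{n-3}\otimes R_{n-2}$ part.

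The final step is bookkeeping. I would show, using \eqref{eqn:l_n}, \eqref{eqn:r_n} and \eqref{eqn:l_n_prime_recursion}, that the union of the three images equals $((L_{n-3}\cup R_{n-3})\otimes L_{n-2})\cup(L'_{n-3}\otimes R_{n-2})$. As a sanity check, the count should satisfy the recursion giving $\numattfib$.

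I expect the main obstacle to be the validity argument inside each transfer lemma. The embedded copy of $F_{n-1}$ or $F_{n-2}$ does not contain every substring of $F_n$: the substrings that span the MUS $S_{n-2}$, or that extend past the copy into $\Delta_n$ or $G_{n-2}$, must be argued separately. For these I would use that any such substring contains $S_{n-3}$ or $S_{n-2}$ uniquely and is long. I would also use that the chosen positions lie outside center children, so the unique occurrence of a long substring still covers the position. A concrete way to organise this is to characterise, for each $k$, exactly which substrings must be hit at positions of $L_k$ versus $R_k$, and then check these conditions via \Cref{lem:s_parse_tree}.
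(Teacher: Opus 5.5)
Your skeleton matches the paper's: \Cref{cor:attoffib_contained_in_formula} for $\subseteq$, three families, and induction from $n=7,8$. Your flip step is also sound, provided the dichotomy is by MUS containment rather than by length. Length is the wrong criterion: $F_n[f_{n-2}-2..f_{n-1}-1]$ has length only $f_{n-3}+2$, yet it is unique and does not lie inside the copy of $(F_{n-1})^R$ at $[f_{n-2}-1..f_n-2]$. An occurrence whose interval contains neither $\lmus_n$ nor $\rmus_n$ lies in the prefix $G_{n-1}$, in $F_n[f_{n-2}+1..f_n-2]=G_{n-1}$, or in the suffix $F_{n-2}$, and all of these occur in $(F_{n-1})^R$. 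Any other occurrence covers a whole MUS range. So $\{p,q\}\mapsto\{f_n-1-p,f_n-1-q\}$ carries $(L_{n-4}\cup R_{n-4})\otimes L_{n-3}\subseteq\attof{F_{n-1}}$ onto $R_{n-3}\otimes L_{n-2}$, as in the paper.

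The genuine gap is in the other two families, which is where the real work lies. The shifts you describe are wrong, and your validity tools (MUS containment, avoiding center children) say nothing about the substrings that cause trouble there. For $L_{n-3}\otimes L_{n-2}$, the paper takes a pair $\{p,q\}\in R_{n-3}\otimes L_{n-2}$ already known to be an attractor of $F_n$. It keeps $q$ and moves only $p$ to $p-f_{n-4}\in L_{n-3}$; the offset is $f_{n-4}$, from $R_{n-3}=L_{n-3}\oplus f_{n-4}$, not $f_{n-1}$. The substrings at risk have an occurrence crossing $p$ but none crossing $p-f_{n-4}$ or $q$. Such occurrences lie strictly between $p-f_{n-4}$ and $q$ and contain no MUS, so your MUS argument does not reach them. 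They are handled because they sit inside the occurrence of $G_{n-2}$ at $[f_{n-2}+1..2f_{n-2}-2]$, which recurs $f_{n-4}$ positions earlier since $F_n=F_{n-3}G_{n-2}\Delta_{n-1}F_{n-2}$.

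For $L'_{n-3}\otimes R_{n-2}$, no shift or reflection of a pair in $\attof{F_{n-2}}$ can work. The two positions of such a pair are less than $f_{n-3}$ apart, whereas those of a pair in $L'_{n-3}\otimes R_{n-2}$ are at least $2f_{n-3}-1$ apart. For instance, shifting by $f_{n-1}$ onto the suffix copy of $F_{n-2}$ puts both positions in $\rmus_n$ and misses $S_{n-3}$. The paper instead moves the two coordinates by different amounts:
\begin{itemize}
\item $q\mapsto q+f_{n-1}\in R_{n-2}$, by \eqref{eqn:r_n};
\item $p\mapsto p+f_{n-3}\in L'_{n-3}$, by \eqref{eqn:l_n_prime_recursion}.
\end{itemize}
The second move is the shift by $f_{n-1}$ followed by a slide back by $f_{n-2}$. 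It is justified by the two occurrences of $G_{n-1}$ at positions $1$ and $f_{n-2}+1$ (from $F_n=F_{n-2}G_{n-1}\Delta_n$), together with a case split on where an occurrence avoiding both new positions can lie. This step uses the hypothesis $L'_{n-5}\otimes(L_{n-4}\cup R_{n-4})\subseteq\attof{F_{n-2}}$, which follows from the induction hypothesis because $L'_{n-5}\subseteq L_{n-5}$.

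The missing idea is sliding one attractor position along a local repetition while the other stays fixed. Without it, your plan proves only $R_{n-3}\otimes L_{n-2}$, plus the few pairs of $L_{n-3}\otimes L_{n-2}$ obtained by also flipping $L'_{n-4}\otimes R_{n-3}$. Your closing suggestion, to characterise which substrings must be hit in $L_k$ versus $R_k$, is too unspecified to close the gap.
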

\begin{note}\label{note:Lprime_range}
    Since $L'_{n-3}\subseteq L_{n-3}$ for $n \geq 6$,~\Cref{thm:all_smallest_attractors_of_fib}
    implies $L'_{n-3}\otimes(L_{n-2}\cup R_{n-2})\subseteq \attof{F_n}$.
\end{note}

Since~\Cref{cor:attoffib_contained_in_formula} establishes
$\attof{F_n} \subseteq ((L_{n-3}\cup R_{n-3})\otimes L_{n-2})\cup(L'_{n-3}\otimes R_{n-2})$,
we prove $\attof{F_n} \supseteq ((L_{n-3}\cup R_{n-3})\otimes L_{n-2})\cup(L'_{n-3}\otimes R_{n-2})$
using the following three lemmas.

\begin{lemma}\label{lem:valid_att_from_flip}
    For $n \geq 8$,
    if $(L_{n-4}\cup R_{n-4})\otimes L_{n-3}\subseteq \attof{F_{n-1}}$, then
    $R_{n-3}\otimes L_{n-2} \subseteq \attof{F_n}$.
\end{lemma}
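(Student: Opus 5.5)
The plan is to derive the attractors of $F_n$ from those of $F_{n-1}$ by reflecting them. The key identity is Observation~\ref{lem:fib_properties}~(\ref{lem:fib_properties_three}), $F_n = G_{n-2}(F_{n-1})^R\Delta_n$, which places a copy of $(F_{n-1})^R$ in the window $W=[f_{n-2}-1 .. f_n-2]$ of $F_n$. Take $\{p,q\}\in(L_{n-4}\cup R_{n-4})\otimes L_{n-3}$, which by hypothesis is an attractor of $F_{n-1}$. By Observation~\ref{fact:att-rev}, $\{f_{n-1}-p+1, f_{n-1}-q+1\}$ is an attractor of $(F_{n-1})^R$. Shifting by the offset $f_{n-2}-2$ of $W$ gives the positions $\{f_n-1-p,\, f_n-1-q\}$ of $F_n$. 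Thus every substring of $F_n$ that occurs inside $W$ has an occurrence (inside $W$) crossing one of $f_n-1-p$, $f_n-1-q$.

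Next I identify the image of the map $x\mapsto f_n-1-x$ using Observation~\ref{obs:l_r_symmetric}. With $k=n-3$, it gives $R_{n-3}=(f_n-1)\ominus L_{n-3}$. With $k=n-4$, it gives $L_{n-2}=(f_n-1)\ominus(L_{n-4}\cup R_{n-4})$. Since the map is a bijection, the reflected pairs range over exactly $R_{n-3}\otimes L_{n-2}$. So every pair $\{u,v\}\in R_{n-3}\otimes L_{n-2}$ arises this way, and it remains to show that each such pair is an attractor of the whole of $F_n$, not only of the substrings that fit inside $W$.

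For the remaining substrings I split according to whether $w$ contains a MUS. If $w$ contains $S_{n-3}$ or $S_{n-2}$, then $w$ is unique and its occurrence covers all of $\lmus_n$ or all of $\rmus_n$ (\Cref{lem:mus_of_fib}, \Cref{def:mus_rng}). Since $u\in R_{n-3}\subseteq\lmus_n$ and $v\in L_{n-2}\subseteq\rmus_n$, this occurrence crosses $u$ or $v$. Otherwise $w$ contains no MUS. By the proof of \Cref{lem:mus_of_fib}, $w$ is then a substring of $G_{n-1}$ or of $F_{n-2}$.

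\begin{itemize}
\item $G_{n-1}$ occurs at $[f_{n-2}+1..f_n-2]\subseteq W$.
\item $F_{n-2}$ is a prefix of $F_{n-1}$ of length $f_{n-2}\le f_{n-1}-2$, hence a prefix of $G_{n-1}$, so it also occurs inside $W$.
\end{itemize}

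Hence every such $w$ occurs inside $W$ and is handled by the reflected attractor. Together the two cases show that $\{u,v\}$ is an attractor of size $2$. By \Cref{prop:fib_smallest_attractor_size} it is therefore a smallest one, so $\{u,v\}\in\attof{F_n}$.

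The main obstacle is the index bookkeeping. I need to check that the shift $f_{n-2}-2$ composed with the reversal map really is $x\mapsto f_n-1-x$, using $f_{n-2}-2+f_{n-1}+1=f_n-1$. I also need that $L_{n-2}$ and $R_{n-3}$ lie inside $\rmus_n$ and $\lmus_n$; this follows from \Cref{obs:l_r_min_max}, and the inequality $f_{n-2}\le f_{n-1}-2$ likewise holds for $n\ge 8$. The fact that the maximal MUS-free substrings are exactly $G_{n-1}$ and $F_{n-2}$ is taken from the proof of \Cref{lem:mus_of_fib}.
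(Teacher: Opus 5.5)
Your proposal is correct and follows essentially the paper's route. You reflect the attractors of $F_{n-1}$ through the copy of $(F_{n-1})^R$ in $F_n=G_{n-2}(F_{n-1})^R\Delta_n$, identify the image pairs as $R_{n-3}\otimes L_{n-2}$ via Observation~\ref{obs:l_r_symmetric}, and reduce all other substrings to substrings of $G_{n-1}$ or $F_{n-2}$, both of which occur inside that copy. The only difference is cosmetic: the paper sorts substrings avoiding both positions by location (left of $p'$, between, right of $q'$, using $p'\le f_{n-1}-1$ and $q'\ge f_{n-1}$), whereas you split on whether the substring contains a MUS and reuse the maximal MUS-free factors from the proof of Lemma~\ref{lem:mus_of_fib}, which amounts to the same thing.
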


\begin{proof} See~\Cref{fig:valid-att-from-flip}.
    For any $\{q, p\} \in (L_{n-4}\cup R_{n-4}) \otimes L_{n-3}\subseteq \attof{F_{n-1}}$ with $q < p$,
    let $p' = f_{n}-p-1$ and $q' = f_{n}-q-1$.
    Since $p\in L_{n-3}$ and $q\in L_{n-4}\cup R_{n-4}$,
    $p' = (f_{n}-1)-p \in R_{n-3}$ and $q' = (f_n-1)-q \in L_{n-2}$ by~\Cref{obs:l_r_symmetric}.
    Thus, $\{p',q'\} \in R_{n-3}\otimes L_{n-2}$.
    Since the mapping $\{ p,q \}\mapsto \{p', q'\}$ is bijective,
    it suffices  to
    show that $\{p', q'\}\in \attof{F_n}$.

    From \Cref{lem:fib_properties},
    $G_n=F_{n-1}G_{n-2} = G_{n-2}(F_{n-1})^R$
    is a prefix of $F_n$.
    Essentially, $p',q'$ are positions in $F_n$
    that correspond to the positions $p,q$ in $F_{n-1}$,
    mapped to the occurrence of $(F_{n-1})^R$,
    i.e., $p' = |G_n|-p+1 = f_n-2-p+1 = f_n - p - 1$
    and $q' = |G_n|-q+1 = f_n-2-q+1 = f_n - q - 1$.
    Thus, by \cref{fact:att-rev}, all substrings of $(F_{n-1})^R$ must have an occurrence that crosses $p'$ or $q'$.
    In order to prove that $\{p',q'\}$ is an attractor of $F_n$,
    consider any substring $u = F_n[i .. j]$ that does not cross $p'$ nor $q'$.
    Notice that since $p \geq \min L_{n-3} = f_{n-2}$, we have $p' \leq f_n - f_{n-2} - 1 = f_{n-1}-1$.
    Also, since $q \leq \max R_{n-4}=f_{n-2}-1$, we have $q' \geq f_n - (f_{n-2}-1)-1 = f_{n-1}$.
    \begin{enumerate}
        \item\label{flip-case:in-g-n-1} If $j < p'$, then $j \leq f_{n-1}-2$, so $u$ is a substring of $G_{n-1}$, which is a substring of $(F_{n-1})^R$.
        \item If $p' < i \leq j < q'$, then $u$ is trivially a substring of $(F_{n-1})^R$.
        \item\label{flip-case:in-f-n-2} If $q' < i$, then $i \geq f_{n-1}+1$ so $u$ is a substring of $F_{n-2}$, which is a substring of $(F_{n-1})^R$
              because $(F_{n-1})^R = (\Delta_{n-1})^RG_{n-1} = (\Delta_{n-1})^R F_{n-2}G_{n-3}$.
    \end{enumerate}
    In all these cases, $u$ is a substring of $(F_{n-1})^R$,
    which means $u$ has an occurrence that crosses $p'$ or $q'$.
    Therefore, $\{ p', q' \}$ is an attractor of $F_n$.
\end{proof}

\begin{figure}[t]
    \centering
    \includegraphics[width=\linewidth]{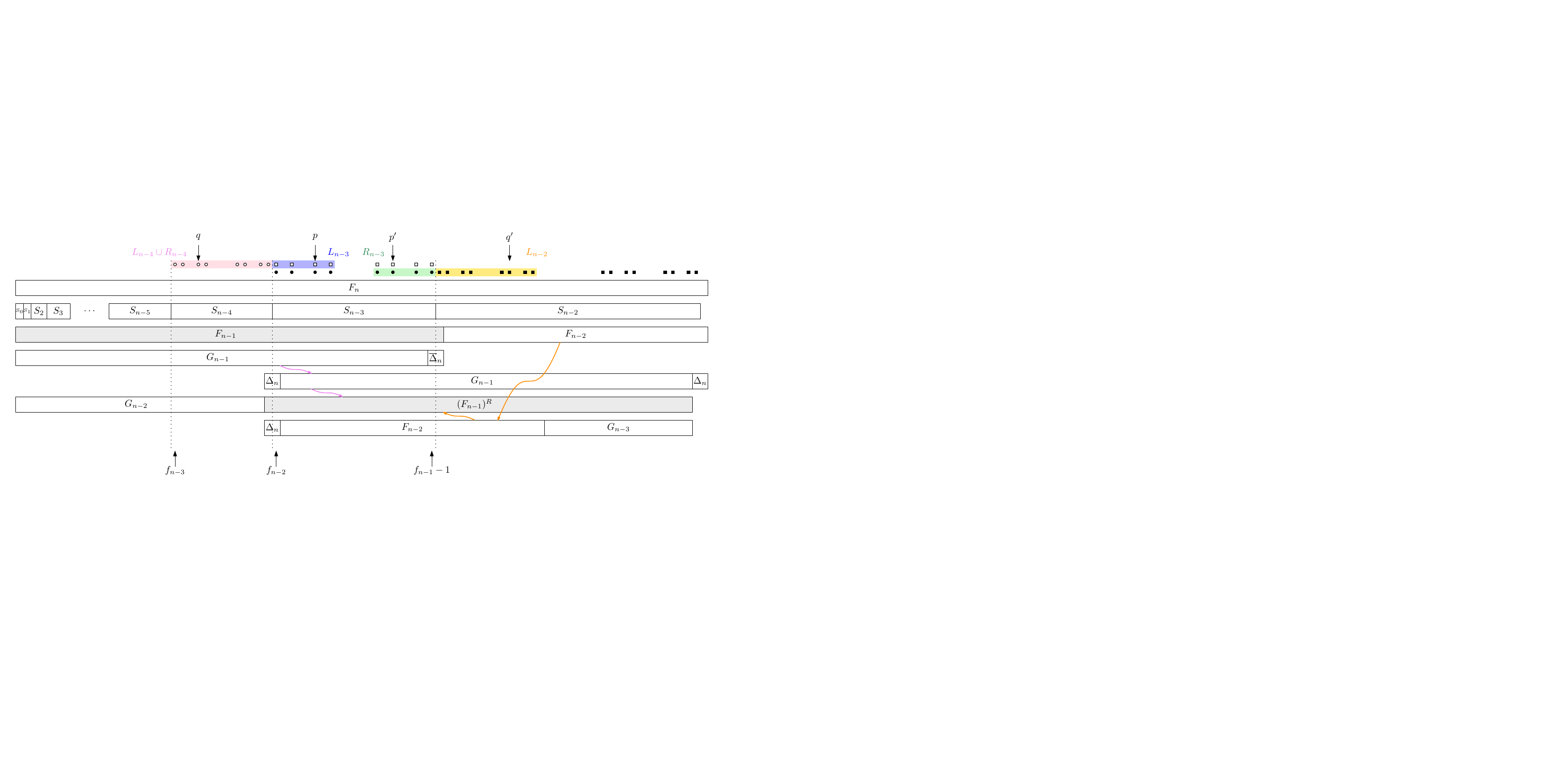}
    \caption{
        Illustration of \cref{lem:valid_att_from_flip} for $n=10$.
        White and black markers at the top denote positions in $\attof{F_{n-1}}$ and $\attof{F_n}$, respectively.
        Highlighted markers, together with the highlighted occurrences of $F_{n-1}$ and $(F_{n-1})^R$, illustrate the mapping described in the proof.
        Pink and orange squiggly arrows illustrate Case~\ref{flip-case:in-g-n-1} and Case~\ref{flip-case:in-f-n-2}, respectively.
        (Note that $\Delta_{n-1} = \overline{\Delta_n}$ and $(\Delta_{n-1})^R=\Delta_n$.)
    }
    \label{fig:valid-att-from-flip}
\end{figure}

\begin{lemma}\label{lem:valid_att_from_flip_offset}
    For $n \geq 7$,
    if $R_{n-3}\otimes L_{n-2}\subseteq \attof{F_n}$, then $L_{n-3}\otimes L_{n-2} \subseteq \attof{F_n}$.
\end{lemma}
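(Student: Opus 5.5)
The plan is to transfer attractors from $R_{n-3}$ back to $L_{n-3}$ by a shift of $f_{n-4}$, using that the two non-center children of the first occurrence of $S_{n-3}=S_{n-5}S_{n-6}S_{n-5}$ are identical strings at offset $f_{n-4}$. By \cref{eqn:r_n}, $R_{n-3}=L_{n-3}\oplus f_{n-4}$. Hence for every $\{p,q\}\in L_{n-3}\otimes L_{n-2}$ the pair $\{p',q\}$ with $p'=p+f_{n-4}$ lies in $R_{n-3}\otimes L_{n-2}$, and so it is an attractor of $F_n$ by hypothesis. Since $|\{p,q\}|=2=\gamma(F_n)$ (\Cref{prop:fib_smallest_attractor_size}), it suffices to show that $\{p,q\}$ is an attractor.

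Fix a substring $u$ of $F_n$. By hypothesis $u$ has an occurrence $F_n[i..j]$ crossing $p'$ or $q$. If it crosses $q$, or if it crosses $p'$ and also $p$, we are done. So assume $p<i\le p'\le j$ and the occurrence misses $q$. Because $p'\le \max R_{n-3}=f_{n-1}-1<f_{n-1}\le \min L_{n-2}\le q$, we get $p<i\le j<q$.

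By \Cref{obs:l_r_min_max}, $p\ge f_{n-2}$ and $q\le \max L_{n-2}=2f_{n-2}-1$. So $u$ is a factor of the first occurrence of $S_{n-3}$ (starting strictly after $p$) possibly extended into the left child $S_{n-4}$ of $S_{n-2}$ (ending strictly before $q$). Next I split by where $j$ falls.

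If $j\le \max\lmus_n=f_{n-1}-1$, then $u$ lies inside $S_{n-3}=S_{n-5}S_{n-6}S_{n-5}$. I would use that $S_{n-5}S_{n-6}S_{n-5}$ is a prefix of $(S_{n-5}S_{n-6})^2$, i.e., has period $f_{n-4}$. This needs $S_{n-5}$ to be a prefix of $S_{n-5}S_{n-6}$, which should follow from the palindromic recursive structure, possibly up to a change in the last two symbols that I must track. Granting this, $F_n[i-f_{n-4}..j-f_{n-4}]=u$, and this shifted occurrence crosses $p'-f_{n-4}=p$ and lies inside $\lmus_n$.

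If $j\ge f_{n-1}$, then $u$ crosses the boundary $S_{n-3}\cdot S_{n-4}$ (the latter being the left child of $S_{n-2}$) and ends before $q$. Here I would look for a second occurrence of a long enough suffix of $S_{n-3}$ followed by a prefix of $S_{n-4}$. The natural candidate is the center-right boundary $S_{n-5}\cdot S_{n-4}$ inside $S_{n-2}=S_{n-4}S_{n-5}S_{n-4}$. Since $u$ starts after $p\ge\min L_{n-3}$, its part inside $S_{n-3}$ is a suffix of $S_{n-3}$ shorter than $|S_{n-5}S_{n-6}|$. I would show that this part is in fact a suffix of the second $S_{n-5}$, or of $S_{n-6}S_{n-5}$ matched against the tail of $S_{n-4}S_{n-5}$.

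The resulting occurrence of $u$ inside $S_{n-2}$ must then be checked to cross $q\in L_{n-2}$ or to be movable onto $p$. If that fails, my fallback is the mirrored occurrence obtained through \Cref{fact:att-rev} and the palindromicity of singular words.

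I expect this second case to be the main obstacle: determining exactly how far $u$ can extend on each side of the $S_{n-3}S_{n-2}$ boundary, and matching it to an occurrence that crosses $q$ or $p$. The restriction of $p$ to positions not covered by center children ($p\in L_{n-3}$) and of $q$ to $L_{n-2}$ should be exactly what makes the lengths work. I would verify the bookkeeping against $n=8,9$ using \cref{fig:att-example-n=8}.
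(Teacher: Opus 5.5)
There is a genuine gap. Your second case ($j\ge f_{n-1}$) is only a plan: you never produce the required occurrence, and the estimate it rests on is false. Since $p\in L_{n-3}$ may lie anywhere in the \emph{left} child of $S_{n-3}$ (e.g.\ $p=f_{n-2}$, $i=f_{n-2}+1$), the part of $u$ inside $S_{n-3}$ can have length up to $f_{n-3}-1$. For $n\ge 7$ this exceeds $|S_{n-5}S_{n-6}|=f_{n-4}$.

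Your first case has a related flaw. The period $f_{n-4}$ of $S_{n-3}=S_{n-5}S_{n-6}S_{n-5}$ is immediate: the word has the form $xyx$ with $|xy|=f_{n-4}$, so nothing needs tracking. But it only gives $F_n[i-f_{n-4}..j-f_{n-4}]=u$ when $i\ge f_{n-2}+f_{n-4}$. If $i$ lies in the left or center child of $S_{n-3}$, the shifted copy starts before $\min\lmus_n=f_{n-2}$, so it does not ``lie inside $\lmus_n$''. For instance, take $n=8$, $p=13$, $p'=18$ and $u=F_8[14..18]$. The shifted copy is $F_8[9..13]$: the equality holds, but for a reason outside $S_{n-3}$.

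That reason is the missing idea, and it makes your case split unnecessary. The period $f_{n-4}$ extends across both the $S_{n-4}S_{n-3}$ and the $S_{n-3}S_{n-2}$ boundaries, over the whole window between $p$ and $q$. By \Cref{lem:fib_properties}, $F_n=F_{n-1}F_{n-2}=F_{n-3}G_{n-2}\Delta_{n-1}F_{n-2}$ and $F_n=F_{n-2}G_{n-1}\Delta_n=F_{n-2}G_{n-2}\Delta_nG_{n-3}\Delta_n$. So $G_{n-2}$ occurs starting at $f_{n-3}+1$ and at $f_{n-2}+1$, i.e., at distance $f_{n-4}$. By \Cref{obs:l_r_min_max}, $f_{n-2}\le p<i\le j<q\le 2f_{n-2}-1$, so $F_n[i..j]$ lies inside the second copy $F_n[f_{n-2}+1..2f_{n-2}-2]$ of $G_{n-2}$. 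Hence $u=F_n[i-f_{n-4}..j-f_{n-4}]$, and since $i\le p'\le j$, this copy crosses $p'-f_{n-4}=p$. This single shift handles both of your cases, with no search inside $S_{n-2}$ and no appeal to \Cref{fact:att-rev}.
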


\begin{proof} See~\Cref{fig:valid-att-from-flip-offset}.
    For any $\{p, q\} \in R_{n-3}\otimes L_{n-2}\subseteq \attof{F_n}$ with $p < q$,
    let $p' = p - f_{n-4}$.
    We have $\{ p' , q \} \in L_{n-3}\otimes L_{n-2}$ since $R_{n-3} = L_{n-3}\oplus f_{n-4}$ (\Cref{eqn:r_n}).
    Since the mapping $\{ p,q \}\mapsto \{p', q\}$ is bijective, it suffices to show that $\{ p', q \}\in \attof{F_n}$.

    Since $\{ p, q\}$ is an attractor of $F_n$, all substrings of $F_n$ have an occurrence crossing $p$ or $q$.
    Consider substrings of $F_n$ that have occurrences crossing $p$ but not $q$ nor $p'$.
    Since
    $\min L_{n-3} = f_{n-2} \leq p' < p < q \leq \max L_{n-2} = 2f_{n-2}-1$
    and $F_n = \underbrace{\overbrace{F_{n-2}}^{f_{n-2}}G_{n-2}}_{2f_{n-2}-2}\Delta_n G_{n-3}\Delta_n$,
    all such occurrences are in a substring of $G_{n-2}$.
    Furthermore, they have an occurrence crossing $p' = p-f_{n-4}$, since
    $F_n = F_{n-1}F_{n-2} = F_{n-3}G_{n-2}\Delta_{n-1}F_{n-2}$
    and $G_{n-2}$ also occurs $f_{n-4}=f_{n-2}-f_{n-3}$ positions to the left.
    Thus, all substrings of $F_n$ have an occurrence crossing $p'$ or $q$,
    and therefore, $\{ p', q \}$ is an attractor of $F_n$.
\end{proof}

\begin{figure}[t]
    \centering
    \includegraphics[width=\linewidth]{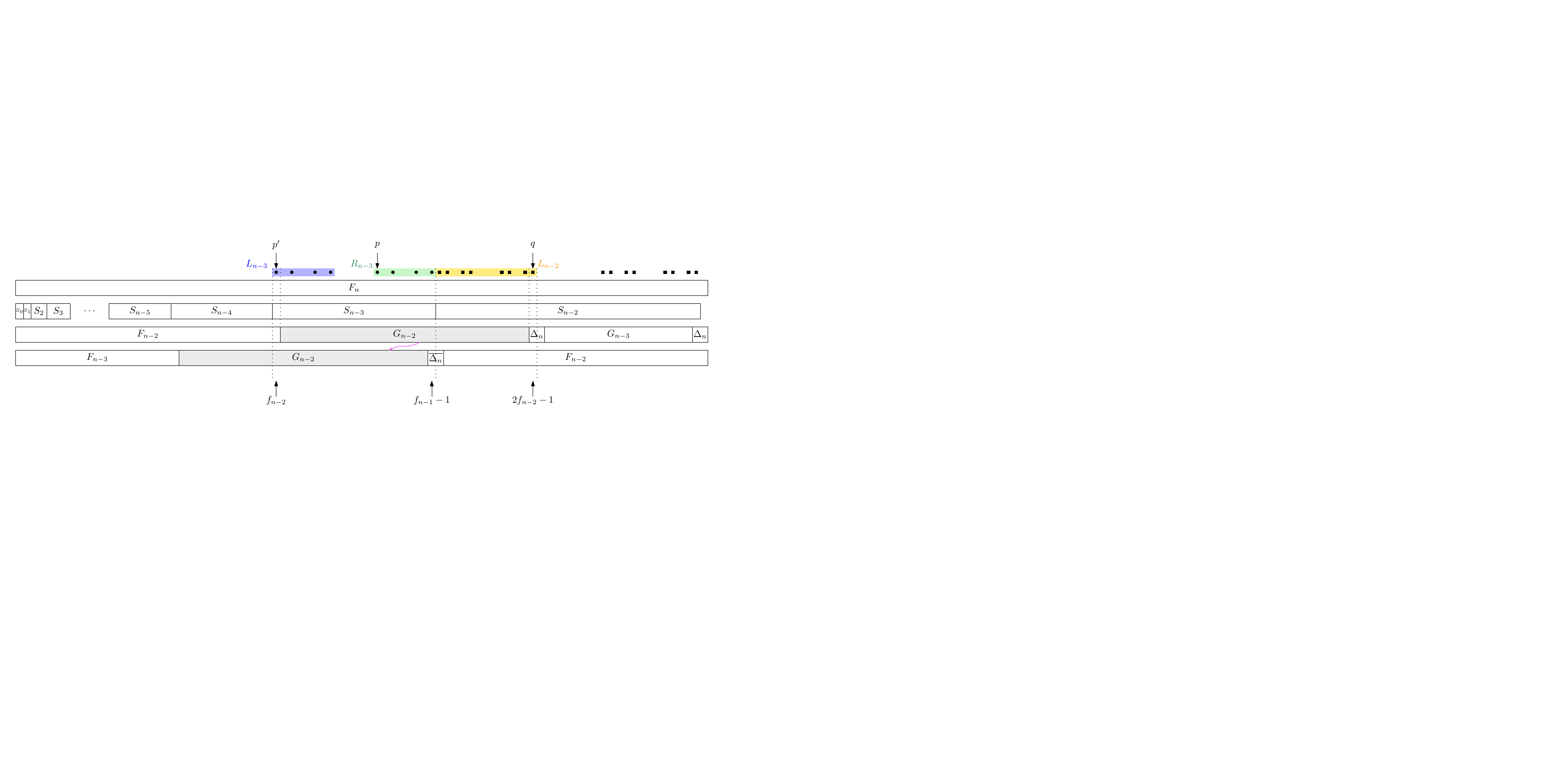}
    \caption{
        Illustration of \cref{lem:valid_att_from_flip_offset} for $n=10$.
        Black markers at the top denote positions in $\attof{F_n}$.
        Relevant attractor positions and occurrences of $G_{n-2}$ are highlighted.
    }
    \label{fig:valid-att-from-flip-offset}
\end{figure}

\begin{lemma}\label{lem:valid_att_from_n-2_offset}
    For $n \geq 9$,
    if $L'_{n-5}\otimes (L_{n-4}\cup R_{n-4})\subseteq \attof{F_{n-2}}$,
    then $L'_{n-3}\otimes R_{n-2}\subseteq \attof{F_n}$.
\end{lemma}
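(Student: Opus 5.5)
The plan is to transport each attractor $\{a,b\}$ of $F_{n-2}$, with $a\in L'_{n-5}$ and $b\in L_{n-4}\cup R_{n-4}$, to the pair $\{a',b'\}$ of $F_n$ given by $a'=a+f_{n-3}$ and $b'=b+f_{n-1}$. By \Cref{eqn:l_n_prime_recursion} we have $a'\in L'_{n-3}$, and by \Cref{eqn:r_n} we have $b'\in R_{n-2}$. The map $(a,b)\mapsto(a',b')$ is a bijection onto $L'_{n-3}\otimes R_{n-2}$, so it suffices to show that each $\{a',b'\}$ is an attractor of $F_n$.

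The two shifts are different, so the argument uses two occurrences of (essentially) $F_{n-2}$ in $F_n$.
\begin{itemize}
\item The suffix occurrence $F_n[f_{n-1}+1..f_n]=F_{n-2}$ contains $b'$ as the image of $b$.
\item A second occurrence begins at position $f_{n-3}+1$. Writing $F_n=F_{n-3}F_{n-4}F_{n-3}F_{n-2}$ and comparing with $F_{n-3}F_{n-2}$ shows that $G_{n-2}$ occurs at $f_{n-3}+1$, and it contains $a'$ as the image of $a$. Since $a\le f_{n-4}+1\ll f_{n-2}-2$, this occurrence differs from $F_{n-2}$ only in its final $\Delta$.
\end{itemize}

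I would first classify the substrings $u=F_n[i..j]$ that cross neither $a'$ nor $b'$, following the three-case scheme of \Cref{lem:valid_att_from_flip}.
\begin{itemize}
\item If $j<a'$, then $u$ lies in the prefix $F_n[1..f_{n-2}]$, which sits inside a copy of $F_{n-2}$.
\item If $i>b'$, then $u$ lies in the suffix $F_{n-2}$.
\item If $a'<i\le j<b'$, then $u$ lies strictly between the two positions, inside $S_{n-3}S_{n-2}$ minus a short prefix and the trailing copy of $S_{n-4}$ beyond $b'$.
\end{itemize}
For the first two cases, the hypothesis on $F_{n-2}$ gives an occurrence of $u$ inside the suffix copy of $F_{n-2}$ that crosses $b'$ or $f_{n-1}+a$. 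In the latter situation I would relocate that occurrence to cross $a'$, using that $F_n[f_{n-1}+1..f_n]$ and $F_n[f_{n-3}+1..f_{n-3}+f_{n-2}-2]$ agree on $G_{n-2}$. If the occurrence touches the final $\Delta$, I would instead use the period $f_{n-1}$ of $F_n$, which carries the prefix $F_{n-2}$ to the suffix.

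The middle case is the main obstacle. For it I would compare with \Cref{note:Lprime_range}, which together with \Cref{lem:valid_att_from_flip,lem:valid_att_from_flip_offset} already gives the pairs $\{a',b'-f_{n-3}\}\in L'_{n-3}\otimes L_{n-2}$ as attractors. From there I would argue as in \Cref{lem:valid_att_from_flip_offset}. A substring that crosses $b'-f_{n-3}$ but neither $a'$ nor $b'$ has length less than $f_{n-3}$ to the right of $b'-f_{n-3}$. It therefore lies within the left child $S_{n-4}$ of $S_{n-2}$, extended by at most the preceding $S_{n-5}$ and the following $S_{n-5}$. The right child $S_{n-4}$ is preceded by the center child $S_{n-5}$, and the first occurrence of $S_{n-3}=S_{n-5}S_{n-6}S_{n-5}$ ends with $S_{n-5}$. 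Hence the shift by $f_{n-3}$ maps such a substring onto an occurrence crossing $b'$, provided it does not run past $f_n$.

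The delicate part is the right-hand overhang: beyond the right child only the single symbol $S_{(n-1)\bmod 2}$ remains. To rule this out I would check that a substring crossing $b'-f_{n-3}$ and extending more than one symbol past $f_n-f_{n-3}$ must already cross $b'$. Otherwise its occurrence can be taken inside $G_{n-2}$, and I would find it crossing $a'$ via the $f_{n-4}$-shift argument of \Cref{lem:valid_att_from_flip_offset}. This boundary bookkeeping, together with the need for $n\ge 9$ so that the relevant children do not degenerate, is where I expect most of the care to go.
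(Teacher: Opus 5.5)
There is a genuine gap, and it is in the middle case $a'<i\le j<b'$. Your outer cases are fine. An occurrence in the suffix copy that crosses $f_{n-1}+a$ but not $b'$ lies in $[f_{n-1}+1..b'-1]\subseteq[f_{n-1}+1..f_n-2]$, so the $f_{n-2}$-shift onto the $G_{n-2}$ starting at $f_{n-3}+1$ works. (The fallback to the period $f_{n-1}$ is never needed.)

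The middle case fails for two reasons.
\begin{enumerate}
\item You use $\{a',b'-f_{n-3}\}\in\attof{F_n}$. That comes from \Cref{lem:valid_att_from_flip,lem:valid_att_from_flip_offset}, which need $(L_{n-4}\cup R_{n-4})\otimes L_{n-3}\subseteq\attof{F_{n-1}}$. This lemma gives you no such hypothesis. Citing \Cref{note:Lprime_range} for $F_n$ would also be circular, since it is a consequence of the theorem for $F_n$ itself.
\item Even granting that, your geometric step is false. An occurrence crossing $b'-f_{n-3}$ that reaches two or more symbols into the center child $S_{n-5}$ of $S_{n-2}$ need not cross $b'$, because $b'\ge\min R_{n-2}=f_{n-1}+f_{n-3}$ lies up to $f_{n-5}$ positions further right. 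Take $n=9$, $b'=47$, $b'-f_6=34$. The interval $[34..43]$ crosses $34$ and avoids $a'\in\{21,22\}$ and $47$. Its $f_6$-shift runs past $f_9=55$. Your $f_{n-4}$-shift fallback does not apply either, since $[34..43]$ is not inside the $G_7$ at $[22..40]$.
\item Your description of the left extent is also wrong. Such an occurrence can cover almost all of $S_{n-3}$, not just a trailing $S_{n-5}$. The $f_{n-3}$-shift survives there only because $S_{n-3}$ and $S_{n-4}S_{n-5}$ differ only in their first letter, which you never establish.
\end{enumerate}

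The missing idea is to split the middle region at $p'=f_{n-1}+a$, the image of $a$ in the suffix copy of $F_{n-2}$. Since $a\in L'_{n-5}$, we have $p'\in\{f_{n-1}+f_{n-4},\,f_{n-1}+f_{n-4}+1\}$, right at the start of that center child.
\begin{itemize}
\item If $u$ lies left of $p'$, it lies in $[f_{n-2}+1..2f_{n-2}]$. This is a copy of $F_{n-2}$ because $F_n=(F_{n-2})^2F_{n-5}F_{n-4}$, so your outer-case argument applies.
\item If $u$ lies right of $p'$, it is inside the suffix $F_{n-2}$, and again your outer-case argument applies.
\item If $u$ crosses $p'$ but not $a'$ or $b'$, it lies in $[f_{n-2}+1..f_n-2]$. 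This is an occurrence of $G_{n-1}$ (from $F_n=F_{n-2}G_{n-1}\Delta_n$), and $G_{n-1}$ is also a prefix of $F_n$. Shifting left by $f_{n-2}$ sends $p'$ to $f_{n-3}+a=a'$.
\end{itemize}
For example, $[34..43]$ above crosses $p'\in\{42,43\}$ and shifts to $[13..22]$, which crosses $a'$. This is exactly the paper's proof, and it needs only the hypothesis on $F_{n-2}$, with no detour through pairs in $L'_{n-3}\otimes L_{n-2}$.
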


\begin{proof} See~\Cref{fig:valid-att-from-n-2}.
    For any $\{ p, q \} \in L'_{n-5}\otimes (L_{n-4}\cup R_{n-4})\subseteq \attof{F_{n-2}}$ with
    $p\in L'_{n-5}$ and $q\in L_{n-4}\cup R_{n-4}$,
    let $p' = p + f_{n-1}$, $q' = q + f_{n-1}$,
    and $p'' = p + f_{n-3} = p' - f_{n-2}$.
    Notice that
    $f_{n-2} \leq p'' \leq f_{n-2}+1$,
    $p' \leq 2f_{n-2}+1$, and $q' \leq f_n-1$ can be seen
    from~\Cref{eqn:l_n_prime} and~\Cref{obs:l_r_min_max}.
    It can be also seen that $p'' \in L'_{n-5}\oplus f_{n-3} = L'_{n-3}$ (\Cref{eqn:l_n_prime_recursion}),
    and $q' \in (L_{n-4}\cup R_{n-4})\oplus f_{n-1}=R_{n-2}$ (\Cref{eqn:r_n}).
    Thus, $\{p'', q'\} \in L'_{n-3}\otimes R_{n-2}$.
    Since the mapping $\{p, q\} \mapsto \{ p'', q'\}$ is bijective,
    it suffices to show that $\{p'',q'\}\in\attof{F_n}$.

    Consider any substring $u = F_n[i .. j]$ that does not cross $p''$ nor $q'$ and the following cases.
    \begin{enumerate}
        \item If $j < p''$, then $u$ is a substring of $F_{n-2}$ since $p'' \leq f_{n-2}+1$.
        \item If $p'' < i \leq j < p'$, then $u$ is a substring of $F_{n-2}$ since
              $f_{n-2}\leq p'' < p' \leq 2f_{n-2}+1$, and $(F_{n-2})^2$ is a prefix of
              $F_n = F_{n-1}F_{n-2}=F_{n-2}F_{n-3} F_{n-4}F_{n-5}F_{n-4} = (F_{n-2})^2F_{n-5}F_{n-4}$.
        \item\label{case:crossing_p_prime}
              If $p'' < i < p' < j < q'$, then $u$ is a substring of $G_{n-1}$ which crosses $p'$,
              since $f_{n-2} \leq p'' < q' \leq f_{n}-1$ and $F_n = F_{n-2}G_{n-1}\Delta_{n}$.
        \item If $p' < i$, then $u$ is a substring of $F_{n-2}$, since $F_n = F_{n-1}F_{n-2}$ and $p' \geq 2f_{n-2}+1 > f_{n-1}$.
    \end{enumerate}
    For Case~\ref{case:crossing_p_prime}, $u$ has an occurrence that crosses $p''$, since $p'' = p' - f_{n-2}$,
    and there is an occurrence of $G_{n-1}$, $f_{n-2}$ positions to the left (as a prefix of $F_n$).
    It remains to show that all substrings of $F_{n-2}$ have an occurrence crossing $p''$ or $q'$.
    Since $\{ p, q \}$ is an attractor of $F_{n-2}$,
    $F_n = F_{n-1}F_{n-2}=F_{n-2} F_{n-3} F_{n-2}$, and $\{p',q'\} = \{p,q\}\oplus f_{n-1}$,
    all substrings of $F_{n-2}$ have an occurrence that crosses $p'$ or $q'$.
    Furthermore, as was the case for Case~\ref{case:crossing_p_prime},
    any substring of $F_{n-2}$ crossing $p'$ but not crossing $q'$ is a substring of $G_{n-1}$,
    and has an occurrence crossing $p''$.
    Therefore, any substring will cross $p''$ or $q'$, and $\{p'', q'\}$ is an attractor of $F_n$.
\end{proof}

\begin{figure}[t]
    \centering
    \includegraphics[width=\linewidth]{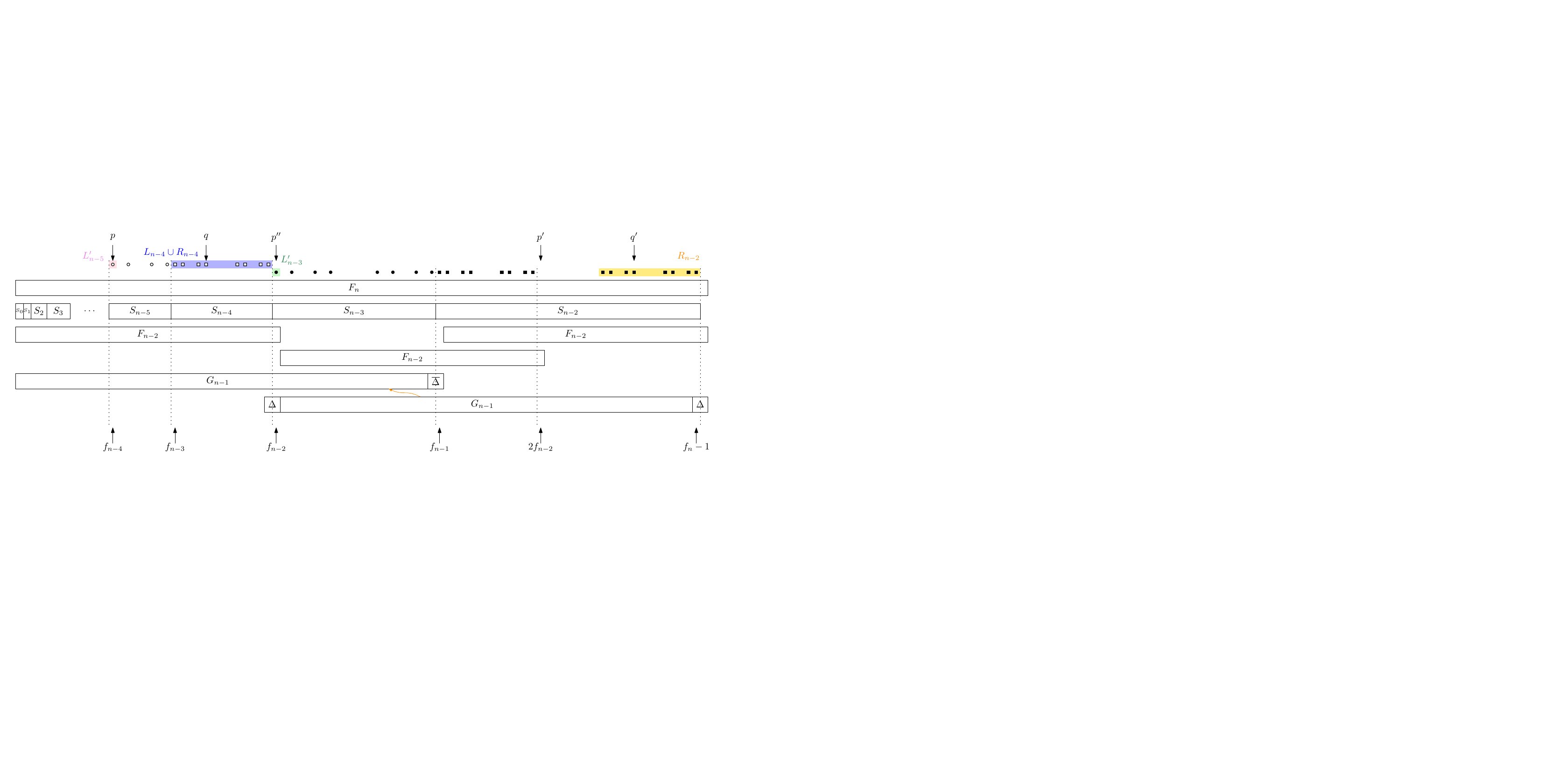}
    \caption{
        Illustration of \cref{lem:valid_att_from_n-2_offset} for $n=10$.
        White and black markers at the top denote positions in $\attof{F_{n-2}}$ and $\attof{F_n}$, respectively.
    }
    \label{fig:valid-att-from-n-2}
\end{figure}

\begin{proof}[Proof of~\Cref{thm:all_smallest_attractors_of_fib}]
    The proof is by induction.
    $\attof{F_7}$ and $\attof{F_8}$ can be confirmed by brute force computation:
    \begin{align*}
        \attof{F_7} & = ((L_{4}\cup R_{4})\otimes L_{5})\cup(L'_{4}\otimes R_{5})                                       \\
                    & = ((\{8,9 \}\cup\{ 11,12\})\otimes \{13,15\})\cup(\{8,9\}\otimes \{18,20\})                       \\
                    & = \{
        \{ 8, 13 \}, \{ 8, 15\}, \{ 8,18\}, \{ 8,20\},\{ 9, 13 \}, \{ 9, 15\}, \{ 9,18\}, \{ 9,20\},                    \\
                    & \phantom{= \{,} \{ 11, 13 \}, \{ 11, 15\}, \{ 12, 13 \}, \{ 12, 15\}\}, \text{ and}               \\
        \attof{F_8} & = ((L_{5}\cup R_{5})\otimes L_{6})\cup(L'_{5}\otimes R_{6})                                       \\
                    & = ((\{ 13, 15\}\cup \{ 18,20\})\otimes\{21,22,24,25\})\cup(\{13\}\otimes\{29,30,32,33\})          \\
                    & = \{
        \{13,21\},\{13,22\},\{13,24\},\{13,25\},\{13,29\},\{13,30\},\{13,32\},\{13,33\},                                \\
                    & \phantom{= \{,} \{15,21\},\{15,22\},\{15,24\},\{15,25\}, \{18,21\},\{18,22\},\{18,24\},\{18,25\}, \\
                    & \phantom{= \{,} \{20,21\},\{20,22\},\{20,24\},\{20,25\}
        \}.
    \end{align*}
    Under the induction hypothesis and~\Cref{note:Lprime_range}, the conditions of~\cref{lem:valid_att_from_flip,lem:valid_att_from_flip_offset,lem:valid_att_from_n-2_offset}
    hold for $n\geq 9$, and thus, together with~\Cref{cor:attoffib_contained_in_formula}
    and the above base cases, the theorem holds.
\end{proof}

\begin{corollary}
    For $k \geq 4$,
    $|\attof{F_{2k-1}}| = (2^{k-3} + 1)2^{k-2}$
    and
    $|\attof{F_{2k}}|   = (2^{k-2} + 1)2^{k-2}$.
\end{corollary}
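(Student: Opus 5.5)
We count the set given by \Cref{thm:all_smallest_attractors_of_fib}: for $n\geq 7$,
$\attof{F_n} = ((L_{n-3}\cup R_{n-3})\otimes L_{n-2})\cup(L'_{n-3}\otimes R_{n-2})$.
The plan has four steps: compute the sizes of $L_k$, $R_k$, $L'_k$; check the relevant unions are disjoint; check that distinct pairs give distinct unordered sets; and then substitute $n=2k-1$ and $n=2k$.

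\textbf{Sizes.} By \Cref{eqn:r_n}, $R_k = L_k\oplus f_{k-1}$, so $|R_k|=|L_k|$. Next we need $L_{k-2}\cap R_{k-2}=\emptyset$. This follows from \Cref{obs:l_r_min_max}, since $\max L_{k-2} = 2f_{k-2}-1 < f_{k-1}+f_{k-3} = \min R_{k-2}$. Hence \Cref{eqn:l_n} gives the recurrence $|L_k| = 2|L_{k-2}|$. With the base cases $|L_2|=|L_3|=1$ this yields $|L_k| = 2^{\lfloor k/2\rfloor -1}$ for $k\ge 2$. By \Cref{eqn:l_n_prime}, $|L'_k|=2$ if $k$ is even and $|L'_k|=1$ if $k$ is odd.

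\textbf{Disjointness.} Every element of $L_{n-3}\cup R_{n-3}$ lies in $\lmus_n$, and every element of $L_{n-2}\cup R_{n-2}$ lies in $\rmus_n$. These two ranges are disjoint, so each unordered pair $\{x,y\}$ determines its ordered pair $(x,y)$ uniquely. Consequently $|X\otimes Y| = |X||Y|$ for the sets involved. Moreover, the two parts of the union are disjoint because $L_{n-2}\cap R_{n-2}=\emptyset$ (same argument as above). Therefore
\[
|\attof{F_n}| = 2|L_{n-3}|\,|L_{n-2}| + |L'_{n-3}|\,|L_{n-2}|.
\]

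\textbf{Substitution.} For $n=2k-1$ we have $n-3=2k-4$ (even) and $n-2=2k-3$ (odd). Then $|L_{2k-4}|=2^{k-3}$, $|L_{2k-3}|=2^{k-3}$ and $|L'_{2k-4}|=2$, so the count is $2\cdot 2^{2k-6}+2\cdot 2^{k-3}=(2^{k-3}+1)2^{k-2}$.

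For $n=2k$ we have $n-3=2k-3$ (odd) and $n-2=2k-2$ (even). Then $|L_{2k-3}|=2^{k-3}$, $|L_{2k-2}|=2^{k-2}$ and $|L'_{2k-3}|=1$, so the count is $2^{2k-4}+2^{k-2}=(2^{k-2}+1)2^{k-2}$.

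The requirement $k\geq 4$ ensures $n\geq 7$, so \Cref{thm:all_smallest_attractors_of_fib} applies. The only step needing care is the disjointness of $L_{k}$ and $R_{k}$ together with the bijectivity of the $\otimes$ pairs; both follow from the min/max bounds in \Cref{obs:l_r_min_max}.
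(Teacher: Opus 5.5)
Your proposal is correct and follows the paper's own proof. You obtain $|L_k|=|R_k|$ from the doubling recurrence with base cases $|L_2|=|L_3|=1$, take $|L'_k|\in\{1,2\}$ by parity, and substitute $n=2k-1$ and $n=2k$ into \Cref{thm:all_smallest_attractors_of_fib}. The one addition is that you spell out the disjointness facts the paper uses implicitly: $L_k\cap R_k=\emptyset$, $\lmus_n\cap\rmus_n=\emptyset$ (so $|X\otimes Y|=|X||Y|$), and the disjointness of the two parts of the union.
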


\begin{proof}
    It is clear that $|L_2|=|R_2|=|L_3|=|R_3|=1$,
    and $|L_i| = |R_i| = 2|L_{i-2}|=2|R_{i-2}|$ for $i \geq 4$.
    Therefore,
    $|L_{2k-1}| = |R_{2k-1}| = 2^{k-2}$ and
    $|L_{2k}| = |R_{2k}| = 2^{k-1}$.
    Since $|L'_{2k-1}|=1$ and $|L'_{2k}|=2$,
    we have from~\Cref{thm:all_smallest_attractors_of_fib} that
    $|\attof{F_{2k-1}}| = |((L_{2k-4}\cup R_{2k-4})\otimes L_{2k-3})\cup(L'_{2k-4}\otimes R_{2k-3})|
        = (2\cdot 2^{k-3}\cdot 2^{k-3}) + 2 \cdot 2^{k-3} = (2^{k-3}+1)2^{k-2}$ and
    $|\attof{F_{2k}}| = |((L_{2k-3}\cup R_{2k-3})\otimes L_{2k-2})\cup(L'_{2k-3}\otimes R_{2k-2})|
        = (2\cdot 2^{k-3}\cdot 2^{k-2}) + 2^{k-2} = (2^{k-2}+1)2^{k-2}$.
\end{proof}

\section{The Smallest Attractors of Period-Doubling Words}
\begin{definition}[$D_n$]
    For each $n\geq 0$, the $n$th period-doubling word is defined as
    $D_0 = \texttt{a}$, and $D_n = \phi(D_{n-1})$ where $\phi$ is a morphism defined by
    $\phi(\texttt{a})=\texttt{ab}$ and $\phi(\texttt{b})=\texttt{aa}$. Note that $|D_n| = 2^n$.
\end{definition}

\begin{theorem}\label{thm:pd_attractors}
    $\attof{D_2} = \{ \{ 2, 3 \}, \{ 2, 4\} \}$.
    For $n \geq 3$,
    $\attof{D_n} = \{ \{ p_n, r_n \},$ $\{ q_n, r_n \} \}$,
    where
    $p_n = 3\cdot 2^{n-3}$, $q_n = 2^{n-1}$, and $r_n = 3\cdot 2^{n-2}$.
\end{theorem}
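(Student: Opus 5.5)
We follow the same two-sided strategy as for Fibonacci words. First we locate the MUSs of $D_n$, which forces every size-2 attractor into a product of two small ranges. Then we eliminate all but two pairs and verify those two directly, using the recursive structure of $D_n$.

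We take $\gamma(D_n)=2$ from Schaeffer and Shallit, and handle $D_2=\texttt{abaa}$ and $D_3=\texttt{abaaabab}$ by brute force, which also serves as a base for $n\ge 3$. For the structure we use $D_n=D_{n-1}\overline{D_{n-1}}'$, that is, $D_n = D_{n-1}D_{n-1}$ with the last symbol flipped. This follows by induction from $\phi$: $D_n=D_{n-1}\,\phi^{n-1}(\texttt{b})$, and $\phi^{n-1}(\texttt{b})$ equals $\phi^{n-1}(\texttt{a})$ except in its last symbol. Writing $D_n=X_n c_n$ with $c_n$ the last letter, we get $D_n = X_{n-1}c_{n-1}X_{n-1}\bar c_{n-1}$. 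Hence $D_n = X_{n-2}c\,X_{n-2}\bar c\,X_{n-2}c\,X_{n-2}c$, with $c=c_{n-2}$.

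\textbf{Step 1 (MUSs).} We determine the unique substrings from this four-block factorization. The candidates are the substring $c\,X_{n-2}\,c$ spanning blocks 3–4, which must cross position $r_n=3\cdot 2^{n-2}$ (the boundary $c$ between blocks 3 and 4), and a substring around the $\bar c$ at position $2^{n-1}=q_n$. More precisely, we claim that the MUSs of $D_n$ for $n\ge 3$ are $w_1=D_n[2^{n-2}+1\,..\,2^{n-1}]$-type words around the flipped symbol and a word $w_2$ whose unique occurrence covers $r_n$ together with nearby positions. We prove uniqueness by counting occurrences inside the self-similar factorization, using induction on $n$ and the fact that $X_{n-2}$ has few occurrences in $D_n$ (a synchronization argument: $D_n$ over $\phi$-blocks $\texttt{ab},\texttt{aa}$ parses uniquely since $\texttt{b}$ only occurs at even positions). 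Desubstituting via $\phi^{-1}$ reduces occurrence questions about $D_n$ to $D_{n-1}$. The goal of this step is the analogue of \Cref{obs:att_mus_rng}: every smallest attractor lies in $U\otimes V$, where $V$ is a tiny range forced to be $\{r_n\}$ and $U$ a range containing $p_n$ and $q_n$.

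\textbf{Step 2 (invalidation).} Within $U$, we exclude every position other than $p_n=3\cdot 2^{n-3}$ and $q_n=2^{n-1}$. For each excluded position we exhibit a short substring whose occurrences all avoid both it and $r_n$. These witnesses are occurrences of $X_{n-3}$-bordered words, obtained by the desubstitution argument: $D_n=\phi(D_{n-1})$, and an occurrence crossing an odd or even position corresponds to an occurrence in $D_{n-1}$. We therefore expect an induction in which $\attof{D_n}$ relates to $2\cdot$(positions in $\attof{D_{n-1}}$), consistent with $p_n=2p_{n-1}$, $q_n=2q_{n-1}$ and $r_n=2r_{n-1}$.

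\textbf{Step 3 (validity).} Given $\{p,r\}\in\attof{D_{n-1}}$, we show that $\{2p,2r\}$ is an attractor of $D_n$. Any substring $u$ of $D_n$ of length at least $3$ has a unique $\phi$-parse up to a boundary letter. The core $\phi^{-1}$-preimage has an occurrence in $D_{n-1}$ crossing $p$ or $r$, so its image crosses $2p-1$ or $2p$ (respectively $2r-1$ or $2r$). Because the extension letters must also fit, the crossing position can be taken at the even position. Short substrings are checked directly.

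The main obstacle is Step 3, specifically the boundary effects of desubstitution. A substring that starts at an even position or ends at an odd position carries a partial block, and we must show that the image occurrence still covers $2p$ rather than only $2p-1$. We would handle this by extending $u$ to the full $\phi$-blocks, arguing that the extended word is still a substring, and noting that $2p$ is the second letter of its block. Step 1 carries the same kind of difficulty.
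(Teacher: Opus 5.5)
\textbf{Validity (Step 3).} This step has a genuine gap, exactly where you locate the main obstacle. Suppose $u$ ends at an odd position. You extend it to $Y=\phi(Y')$ and take an occurrence of $Y'$ crossing $p$. If that occurrence \emph{ends} at $p$, the induced occurrence of $u$ ends at $2p-1$ and misses $2p$. That $2p$ is the second letter of its block is the reason it can be missed, not a reason it is covered.

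Your argument is generic: it uses nothing about which pair is doubled or about $n$. So it would equally prove a false statement. We have $\{2,4\}\in\attof{D_2}$, but $\{4,8\}$ is not an attractor of $D_3=\texttt{abaaabab}$: $\texttt{ba}$ occurs only at $[2,3]$ and $[6,7]$, each ending just before a chosen position.

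The paper makes your reduction (every substring crosses $s_k$ or ends just before it) and then does the real work on exactly these residual substrings, using the specific positions. A substring ending just before $p_k$ (resp.\ $q_k$) reappears ending just before $p_k+2^{k-1}$ (resp.\ $2^k$), via $D_k=D'_{k-1}cD'_{k-1}\bar c$. If it avoids $r_k$ there, it is a suffix of $D'_{k-3}$ (resp.\ $D'_{k-2}$) and reappears ending just before $r_k$. A substring ending just before $r_k$ is then handled by a case analysis on its length. That analysis uses the period $|D_{k-3}|$ of $D_{k-3}D_{k-3}D'_{k-2}$ and squares of smaller $D_j$ centred at $r_k$, $r_k-2^{j}$, $p_k$ and $q_k$. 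It shows such a substring has occurrences crossing both $p_k$ and $q_k$, or crossing $r_k$. Your sketch has no substitute for this.

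\textbf{Completeness (Steps 1--2).} This half also fails as planned, because the MUSs do not force the second position to be $r_n$. For $D_4=\texttt{abaaabababaaabaa}$, the MUSs are $\texttt{aabab}$, $\texttt{babab}$, $\texttt{babaa}$, $\texttt{aabaa}$, with overlapping ranges $[4,8],[6,10],[8,12],[12,16]$. So there is no analogue of \Cref{obs:att_mus_rng}. For example, $\{8,16\}$ passes both the MUS test and the single-letter test. It is not an attractor, since $\texttt{aaa}$ occurs only at $[3,5]$ and $[11,13]$. But your Step 2 only excludes positions paired with $r_n$, so it never eliminates this pair.

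The paper avoids MUSs entirely:
\begin{enumerate}
\item A smallest attractor has an $\texttt{a}$-position $p$ and a $\texttt{b}$-position $q$, and $q$ is even by \Cref{obs:pd_parity_b}.
\item A local forcing argument with $\texttt{aaa},\texttt{aab},\texttt{baa},\texttt{bab},\texttt{abab},\texttt{aabaa},\texttt{aabab},\texttt{babab},\texttt{babaa},\texttt{ababab}$ shows first $D_n[p+1]\neq\texttt{b}$, and then that $p$ is even.
\item Parity-uniqueness of occurrences (\Cref{cor:pd_substring_parity}) then gives $\{p/2,q/2\}\in\attof{D_{n-1}}$, and induction from $n=3$ finishes.
\end{enumerate}
Your Step 2 anticipates the halving but gives no reason why both positions of an \emph{arbitrary} smallest attractor are even. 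Without that, the induction cannot exclude odd candidates of the form $2s-1$.
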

Schaeffer and Shallit showed that for arbitrary prefixes of length at least $6$ of the infinite period-doubling sequence,
$\{ p_n, r_n \}$ is a smallest attractor when the length is in $[2^n, 3\cdot 2^{n-1})$,
and $\{ q_{n}, q_{n+1}\}$, when the length is in $[3\cdot 2^{n-1},2^{n+1})$
~\cite{schaeffer2024stringattractorsautomaticsequences}.
We consider only the length $2^n$ prefixes,
but characterize all smallest string attractors.
\begin{observation}\label{obs:pd_parity_b}
    Since a $\texttt{b}$ in $D_n$ must have come from an $\texttt{a}$ in $D_{n-1}$ by $\phi(\texttt{a})=\texttt{ab}$,
    an occurrence of $\texttt{b}$ in $D_n$ is always at an even position.
    Hence, $\texttt{bb}$ cannot occur.
    This implies that an occurrence of $\texttt{aa}$ starting at an odd position
    must be followed (provided it is not the end of the string) and preceded
    by $\texttt{ab}$,
    since it could only have come from $\phi(\texttt{aba})$.
\end{observation}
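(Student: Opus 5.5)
The plan is a direct block-decomposition argument followed by a short induction on $n$. Since $D_n = \phi(D_{n-1})$ and both images $\phi(\texttt{a})=\texttt{ab}$ and $\phi(\texttt{b})=\texttt{aa}$ have length $2$, I will write $D_n$ as a concatenation of $2^{n-1}$ blocks. For $1\le k\le 2^{n-1}$, the $k$th block occupies positions $[2k-1..2k]$ and equals $\phi(D_{n-1}[k])$.

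\emph{Parity of $\texttt{b}$.} Both blocks begin with $\texttt{a}$, so every odd position of $D_n$ holds $\texttt{a}$, for all $n\ge 1$. For $n=0$, $D_0=\texttt{a}$ contains no $\texttt{b}$ at all. Hence every $\texttt{b}$ in $D_n$ sits at an even position, namely position $2k$ with $D_{n-1}[k]=\texttt{a}$. Two consecutive positions always include an odd one, so $\texttt{bb}$ cannot occur. This part needs no induction.

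\emph{Occurrences of $\texttt{aa}$ at odd positions.} Let $D_n[2k-1..2k]=\texttt{aa}$. This occurrence is exactly the $k$th block, and $\texttt{aa}=\phi(\texttt{b})$ while $\phi(\texttt{a})=\texttt{ab}\neq\texttt{aa}$, so $D_{n-1}[k]=\texttt{b}$.
\begin{itemize}
\item Applying the parity fact to $D_{n-1}$ (valid since it holds for all indices), $k$ is even, so $k\ge 2$ and the block $k-1$ exists.
\item Since $\texttt{bb}$ does not occur in $D_{n-1}$, we get $D_{n-1}[k-1]=\texttt{a}$, and also $D_{n-1}[k+1]=\texttt{a}$ whenever $k<2^{n-1}$.
\end{itemize}
Therefore the neighbouring blocks are $\phi(\texttt{a})=\texttt{ab}$: one precedes the occurrence, and one follows it unless the occurrence ends the string. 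Equivalently, the occurrence lies inside $\phi(\texttt{aba})$, or inside $\phi(\texttt{ab})$ at the right end.

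\emph{Main point of care.} The argument is routine. The only subtlety is that the parity statement must be applied to $D_{n-1}$ rather than $D_n$. It holds for every index, including $n-1=0$, where no $\texttt{b}$ exists and so no $\texttt{aa}$ at an odd position can arise in $D_1=\texttt{ab}$. With that, the second claim follows for all $n\ge 1$.
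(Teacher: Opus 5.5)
Your proof is correct and follows the same route as the paper. Every \texttt{b} arises as the second letter of $\phi(\texttt{a})$, so it sits at an even position and $\texttt{bb}$ is impossible; an odd-position $\texttt{aa}$ is exactly a block $\phi(\texttt{b})$ whose preimage $\texttt{b}$ in $D_{n-1}$ is flanked by $\texttt{a}$'s, i.e., it comes from $\phi(\texttt{aba})$. Applying the parity fact in $D_{n-1}$ to guarantee that the preceding block exists makes explicit a detail the paper leaves implicit.
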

Furthermore, the above implies the following corollary.
\begin{corollary}\label{cor:pd_substring_parity}
    Any occurrence of a substring $x$ in $D_n$ with $|x| \geq 3$
    has the same parity, i.e.,
    $|\{ i \bmod 2 \mid D_n[i..i+|x|) = x\}| = 1$.
\end{corollary}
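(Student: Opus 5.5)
The statement to prove is \Cref{cor:pd_substring_parity}: for $|x|\ge 3$, all occurrences of $x$ in $D_n$ start at positions of the same parity. The plan is a case split on whether $x$ contains a \texttt{b}, using \Cref{obs:pd_parity_b} in each case.

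\emph{Case 1: $x$ contains a \texttt{b}.} Let the first \texttt{b} of $x$ be at offset $k$ within $x$, so $x[k]=\texttt{b}$. For any occurrence $x=D_n[i..i+|x|)$, the position $i+k-1$ holds a \texttt{b}. By \Cref{obs:pd_parity_b} this position is even, so $i\equiv k+1 \pmod 2$. Since $k$ depends only on $x$, every occurrence has the same parity. This case does not use $|x|\ge 3$.

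\emph{Case 2: $x$ contains no \texttt{b}, so $x=\texttt{a}^m$ with $m\ge 3$.} I will show $x$ occurs at most once, or at least only at one parity.
\begin{itemize}
\item Since \texttt{b}'s occupy only even positions, any occurrence of $\texttt{aa}$ contains an odd position $j$ with $D_n[j..j+1]=\texttt{aa}$ starting there. Indeed, an occurrence of $\texttt{aaa}$ always contains such an odd start among its first two positions.
\item By \Cref{obs:pd_parity_b}, an $\texttt{aa}$ starting at odd $j$ is preceded by \texttt{ab} when $j>1$, so $D_n[j-1]=\texttt{b}$.
\item An $\texttt{aa}$ starting at odd $j$ is followed by \texttt{ab} unless it ends the string, so $D_n[j+3]=\texttt{b}$.
\end{itemize}
Consequently every maximal run of \texttt{a}'s has length at most $3$: an interior $\texttt{aaa}$ would require a \texttt{b} inside it. (This also rules out $m\ge4$ except at the string boundary, which I treat separately.)

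Now consider $x=\texttt{aaa}$ at position $i$. If $i$ is even, positions $i+1$ and $i+2$ form an $\texttt{aa}$ starting at the odd position $i+1$. That forces $D_n[i]=\texttt{b}$ whenever $i+1>1$, which holds since $i\ge 2$. This contradicts $D_n[i]=\texttt{a}$. So every occurrence of $\texttt{aaa}$ starts at an odd position, and the same holds for $\texttt{a}^m$ with $m\ge3$, since such an occurrence contains $\texttt{aaa}$ at offset $0$.

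The main subtlety is the boundary exceptions in \Cref{obs:pd_parity_b}, where the ``preceded/followed by'' clauses fail at the ends of the string. The argument above uses only the ``preceded'' clause, applied at odd position $i+1\ge 3$, so the left boundary causes no trouble. The right-end exception is never needed. If the observation's ``preceded'' claim is doubted, I would instead reprove it directly: an odd-starting $\texttt{aa}=D_n[j..j+1]$ is the image $\phi(\texttt{b})$ of $D_{n-1}[(j+1)/2]$. Since $D_{n-1}$ has no \texttt{bb}, the previous letter is \texttt{a}, whose image \texttt{ab} ends at $j-1$.
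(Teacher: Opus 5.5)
Your proof is correct and follows the paper's route. The paper gives no separate argument and derives the corollary directly from \Cref{obs:pd_parity_b}, which is exactly your split: if $x$ contains a \texttt{b}, its forced even position fixes the parity, and if $x=\texttt{a}^m$, an even-starting \texttt{aaa} would contain an odd-starting \texttt{aa} that must be preceded by \texttt{b}. One harmless slip: the opening claim of your first bullet, that every occurrence of \texttt{aa} contains an odd-starting \texttt{aa}, is false (e.g.\ $D_3[4..5]$ in $D_3=\texttt{abaaabab}$), but you only use the correct version for \texttt{aaa}, and the remark on maximal runs is not needed.
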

\begin{observation}\label{obs:pd_properties}
    The following are known or easily verifiable properties of period-doubling words.
    \begin{enumerate}[(1)]
        \item\label{obs:pd_factorizations}
              For $n \geq 2$, $D_n = D_{n-1}(D_{n-2})^2 = D'_{n-1}cD'_{n-1}\bar{c}$,
              where $D'_n = D_n[1..2^n)$, $c\in\{\texttt{a},\texttt{b}\}$, and $\bar{\texttt{a}} = \texttt{b}$, $\bar{\texttt{b}} = \texttt{a}$.
        \item\label{obs:prefix_prime_in_square} $(D_{n-1})^2 = D'_{n}\bar{c}$ where $c = D_{n}[|D_{n}|]$.
        \item\label{obs:cube_in_square} For any $i$ s.t.\ $0 \leq n-2i < n$, $(D_{n-2i})^2$ is a suffix of $D_{n}$, and for any $0 \leq i < n$, $D_{i}$ is a prefix of $D_n$.
    \end{enumerate}
\end{observation}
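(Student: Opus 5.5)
We prove the three items in order, each by a short induction on $n$ driven by the morphism $\phi$.

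For (\ref{obs:pd_factorizations}), we first prove $D_n = D_{n-1}(D_{n-2})^2$ by induction. The base case is $D_2=\texttt{abaa}=D_1 D_0 D_0$. For $n\geq 3$ we apply $\phi$ to the hypothesis $D_{n-1}=D_{n-2}(D_{n-3})^2$; since $\phi$ is a morphism and $D_k=\phi(D_{k-1})$, this gives $D_n=\phi(D_{n-1})=D_{n-1}(D_{n-2})^2$.

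For the second factorization we prove the slightly stronger claim that $D_n = D'_{n-1}cD'_{n-1}\bar c$ for every $n\geq 1$, where $c$ is the last symbol of $D_{n-1}$. The base case is $D_1=\texttt{ab}$, with $D'_0=\varepsilon$ and $c=\texttt{a}$. For the step, we apply $\phi$ to $D_{n-1}=D'_{n-2}c'D'_{n-2}\bar{c'}$ and obtain
\[
D_n=Y\,\phi(c')\,Y\,\phi(\bar{c'}), \qquad Y=\phi(D'_{n-2}).
\]
The key point is that $\phi(\texttt{a})=\texttt{ab}$ and $\phi(\texttt{b})=\texttt{aa}$ share the first symbol $\texttt{a}$ and have different second symbols. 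Writing $\phi(c')=\texttt{a}d$ and $\phi(\bar{c'})=\texttt{a}\bar d$, we get $D_n = (Y\texttt{a})d(Y\texttt{a})\bar d$. Moreover $D_{n-1}=\phi(D_{n-2})=\phi(D'_{n-2}c')=Y\texttt{a}d$, so $D'_{n-1}=Y\texttt{a}$ and $d$ is the last symbol of $D_{n-1}$. This closes the induction. A byproduct is that the last symbol of $D_n$ is the complement of the last symbol of $D_{n-1}$.

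For (\ref{obs:prefix_prime_in_square}), let $c'$ be the last symbol of $D_{n-1}$, so that $D_{n-1}=D'_{n-1}c'$. By (\ref{obs:pd_factorizations}) we have $D'_n=D'_{n-1}c'D'_{n-1}$, and hence $(D_{n-1})^2=D'_{n-1}c'D'_{n-1}c'=D'_n c'$. By the byproduct above, $c'=\bar c$ where $c=D_n[|D_n|]$, which is exactly the claim.

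For (\ref{obs:cube_in_square}), the prefix part is immediate: $D_{k}$ is a prefix of $D_{k+1}$ by (\ref{obs:pd_factorizations}), and these inclusions chain. For the suffix part, the constraint $0\le n-2i<n$ means $i\ge1$, and we induct on $i$. The case $i=1$ is (\ref{obs:pd_factorizations}), which applies since $n\ge 2$. For $i\ge2$, $D_n$ ends with $D_{n-2}$ by (\ref{obs:pd_factorizations}), and by the induction hypothesis applied to $D_{n-2}$ with $i-1$, the word $D_{n-2}$ ends with $(D_{n-2-2(i-1)})^2=(D_{n-2i})^2$. The index conditions are satisfied because $n-2\ge 2$ whenever $n-2i\ge0$ and $i\ge 2$.

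The only step needing care is the second factorization in (\ref{obs:pd_factorizations}), specifically tracking which symbol plays the role of $c$ and checking that the shared first letter of $\phi(\texttt{a})$ and $\phi(\texttt{b})$ makes the induction close. The remaining items are direct consequences.
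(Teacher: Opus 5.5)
Your proof is correct and complete. The paper gives no proof of its own and calls these properties known or easily verifiable; your induction through $\phi$ is the natural routine verification, with the key remark being that $\phi(\texttt{a})$ and $\phi(\texttt{b})$ share their first letter and differ in the second, which pins down $c$ as the last symbol of $D_{n-1}$. The only small gap is the link $D_0 \to D_1$ of the prefix chain in item (3). It does not follow from $D_n = D_{n-1}(D_{n-2})^2$, which needs $n \geq 2$, but it is covered by your strengthened factorization valid for all $n \geq 1$ (or directly by $D_1 = \texttt{ab}$).
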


We first show that the two position pairs are indeed attractors of $D_n$.
\begin{lemma}\label{lem:pd_valid_attractors}
    For $n \geq 3$,
    $\{ \{ p_n, r_n \}, \{ q_n, r_n \} \}\subseteq \attof{D_n}$
\end{lemma}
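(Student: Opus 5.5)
The plan is to prove both inclusions by induction on $n$, using the self-similar factorizations of \Cref{obs:pd_properties}, in the same spirit as the Fibonacci lemmas. Since the smallest attractor of $D_n$ has size $2$ for $n\geq 2$ (Schaeffer and Shallit), it suffices to show that each of $\{p_n,r_n\}$ and $\{q_n,r_n\}$ is an attractor. The base cases $n=3,4$ will be checked by hand ($D_3=\texttt{abaaabab}$, with pairs $\{3,6\}$ and $\{4,6\}$). For the inductive step I would use the invariant that both pairs are attractors of $D_{n-1}$, and record the coordinate identities
\[
p_n = 3\cdot 2^{n-3} = r_{n-1}, \qquad r_n = 2^{n-1} + 2^{n-2} = 2^{n-1} + q_{n-1}.
\]
I will also use $D_n = D_{n-1}\,(D_{n-2})^2$, where $(D_{n-2})^2 = D'_{n-1}\bar c$ by \Cref{obs:pd_properties}~(\ref{obs:prefix_prime_in_square}). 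So the second half of $D_n$ is a copy of $D_{n-1}$ with only its last symbol flipped.

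For $\{p_n,r_n\}$, I apply the hypothesis $\{q_{n-1},r_{n-1}\}\in\attof{D_{n-1}}$ and split the substrings $u=D_n[i..j]$ into four cases.
\begin{enumerate}
\item If $u$ lies inside the first half $D_{n-1}$, it has an occurrence in $D_{n-1}$ crossing $q_{n-1}$ or $r_{n-1}=p_n$. If that occurrence crosses $q_{n-1}$ but not $r_{n-1}$, it ends before $r_{n-1}<2^{n-1}$. Hence it lies in $D'_{n-1}$, and its copy in the second half crosses $q_{n-1}+2^{n-1}=r_n$.
\item If $u$ lies inside the second half and avoids its last symbol, it is a substring of $D'_{n-1}$, so case 1 applies.
\item If $u$ contains the last symbol, then either it crosses $r_n$, or it lies in the final $D_{n-2}$. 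In the latter case it is a substring of the prefix $D_{n-2}$ (\Cref{obs:pd_properties}~(\ref{obs:cube_in_square})), so case 1 applies.
\item Otherwise $u$ crosses the middle boundary $2^{n-1}\mid 2^{n-1}+1$ and lies strictly between $p_n$ and $r_n$. Thus $u$ is a substring of $D_n(p_n..r_n) = (\text{suffix } D_{n-1}[3\cdot 2^{n-3}+1..2^{n-1}])\cdot D'_{n-2}$.
\end{enumerate}

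The argument for $\{q_n,r_n\}$ is symmetric, using the hypothesis $\{p_{n-1},r_{n-1}\}\in\attof{D_{n-1}}$ placed in the second half. The shifted positions are $p_{n-1}+2^{n-1}$ and $r_{n-1}+2^{n-1}$, and these need to be traded for $q_n=2^{n-1}$ and $r_n$. I would do this via the factorization $D_n = D'_{n-1}cD'_{n-1}\bar c$. This factorization shows that material to the right of $q_n$ inside $D'_{n-1}$ also occurs to the left, at offset $-2^{n-1}$. It also shows that any substring avoiding both $c$ and $\bar c$ lies in $D'_{n-1}$. Substrings containing $\bar c$ are handled as in case 3 above.

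The main obstacle is the boundary-crossing substrings of case 4 (and its analogue for $\{q_n,r_n\}$). Such a $u$ straddles the middle of $D_n$ without touching either attractor position, so I must exhibit another occurrence of it that reaches $p_n$ or $r_n$. I would first try the shift by $-2^{n-2}$. The block $D_n[2^{n-3}+1..2^{n-1}-1]$ is the last eighth of $D_{n-2}$ followed by $D'_{n-2}$, because $D_{n-1}=D_{n-2}D'_{n-2}\bar c'$. This block agrees with the region around the middle of $D_n$ except possibly at a single flipped last symbol of a $D_{n-3}$ block. The shifted copy of $u$ then crosses $p_n+\cdots$; I need to check whether it in fact crosses $p_n$ or whether the flipped symbol interferes. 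If the single flipped symbol spoils the match, I would fall back on the parity rigidity of \Cref{cor:pd_substring_parity} and \Cref{obs:pd_parity_b}. These determine $u$'s neighbourhood uniquely from its occurrence parity, and allow a shift by $+2^{n-2}$ into the suffix square $(D_{n-2})^2$ instead, where the copy necessarily covers $r_n$.
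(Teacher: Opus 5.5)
\paragraph{Verdict: genuine gap.} Your Cases 1--3 for $\{p_n,r_n\}$ are correct. They use $r_{n-1}=p_n$, $q_{n-1}+2^{n-1}=r_n$ and $D_n=D_{n-1}D'_{n-1}\bar c$. But the argument stops at Case~4, which is the only case with real content, and neither repair you propose can work there.

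\paragraph{Why the shifts by $\pm 2^{n-2}$ fail.} Write $D_n=D_{n-2}D_{n-3}D_{n-3}D_{n-2}D_{n-2}$.
\begin{itemize}
\item Shifting the region $D_n(p_n..r_n)=D_{n-3}D'_{n-2}$ by $-2^{n-2}$ gives $D_n[2^{n-3}+1..2^{n-1}-1]=(D_{n-4})^2D'_{n-2}$. The words $D_{n-3}$ and $(D_{n-4})^2$ differ exactly in their last letter. In the original coordinates that letter sits at position $p_n+2^{n-3}=2^{n-1}$.
\item Every Case-4 substring contains position $2^{n-1}$, because it crosses the middle boundary. So the flipped symbol always interferes, not just possibly.
\item The shift by $+2^{n-2}$ fails the same way. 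It sends $2^{n-1}$ to $r_n$, and $D_n[2^{n-1}]$ (the last letter of $D_{n-1}$) differs from $D_n[r_n]$ (the last letter of $D_{n-2}$).
\item Concretely, for $n=5$: $D_5[13..23]=\texttt{abaaabaaaba}$, whereas $D_5[5..15]=D_5[21..31]=\texttt{abababaaaba}$.
\item The parity facts only constrain where occurrences can start. They never produce a new occurrence, so they cannot close this case.
\end{itemize}

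\paragraph{The missing idea.} The block $D_n[2^{n-2}+1..r_n-1]=D_{n-3}D_{n-3}D'_{n-2}=(D_{n-3})^3D'_{n-3}$ has period $2^{n-3}$. The paper exploits exactly this in its Case~2. A Case-4 substring occupies $[i,j]$ with $p_n<i\le 2^{n-1}<j<r_n$. Shifting it by $-2^{n-3}$ keeps it inside this periodic block and moves position $2^{n-1}$ onto $p_n$, which finishes $\{p_n,r_n\}$.

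\paragraph{The pair $\{q_n,r_n\}$.} Your plan here is too vague to check. The second half of $D_n$ is $D'_{n-1}\bar c$, not $D_{n-1}$. Also, the shifted positions $p_{n-1}+2^{n-1}$ and $r_{n-1}+2^{n-1}=7\cdot 2^{n-3}$ do not trade to $q_n,r_n$ in any evident way.

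It is easier to reuse the hypothesis $\{q_{n-1},r_{n-1}\}$:
\begin{itemize}
\item Crossing the middle boundary now means containing $q_n$, so there is no Case~4.
\item An occurrence inside $D'_{n-1}$ that crosses $r_{n-1}=p_n$ but not $q_{n-1}$ must lie in $D_n[2^{n-2}+1..2^{n-1}-1]$. The shift by $+2^{n-3}$ in the same periodic block moves it onto $q_n$.
\item Alternatively, once $\{q_n,r_n\}$ is known to be an attractor, $\{p_n,r_n\}$ follows directly from the $-2^{n-3}$ shift.
\end{itemize}

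\paragraph{Comparison with the paper.} With this fix, your factorization-based induction is a valid and shorter alternative to the paper's proof. The paper instead lifts occurrences through $\phi$, and then treats substrings ending just before $r_k$ by a case analysis on length over the squares $(D_{k-2i})^2$.
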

\begin{proof}
    The statement can be verified by brute force for $n = 3$ and $4$.
    Suppose the statement holds for all $3\leq n < k$.
    Let $X = D_k[i..j]~(1\leq i\leq j\leq 2^k)$ be an arbitrary substring of $D_k$,
    and let $Y$ be the shortest even length substring of $D_k$
    that contains the occurrence of $X$, starting at an odd position, and ending at an even position,
    i.e.,  $Y = D_k[2i'-1..2j']$ where $i' = \lceil \frac{i}{2}\rceil$, $j' = \lceil\frac{j}{2}\rceil$.
    It holds that $Y = \phi(Y')$ where $Y'=D_{k-1}[i'..j']$.
    Since $\{p_{k-1}, r_{k-1}\}$ (resp.\ $\{q_{k-1},r_{k-1}\}$) are attractors of $D_{k-1}$,
    $Y'$ has an occurrence $D_{k-1}[i''..j'']=Y'$ crossing
    $p_{k-1}$ (resp.\ $q_{k-1}$) or $r_{k-1}$, i.e.,
    $i'' \leq s_{k-1} \leq j''$ for some $s_{k-1} \in \{ p_{k-1}, q_{k-1}, r_{k-1} \}$.
    Then, since $2i''-1 < 2s_{k-1}=s_{k} \leq 2j''$,
    $Y[2..|Y|]$ in the corresponding occurrence of $Y = D_k[2i''-1..2j'']$ will cross $s_k$.
    Since $Y[2..|Y|-1]$ is a substring of $X$,
    $X$ has an occurrence crossing $s_k$ or ending just before $s_k$.
    Thus, to prove that $\{ p_k, r_k \}$ and $\{ q_k, r_k \}$ are attractors of $D_k$,
    it remains to show that any substring $X$ ending just before
    $p_k$ (resp.\ $q_k$) or $r_k$ will have an occurrence crossing $p_k$ (resp.\ $q_k$) or $r_k$.

    Since $p_k = |D_{k-2}D_{k-3}|$, $q_k = |D_{k-1}|$,
    and $D_k = D'_{k-1}cD'_{k-1}\bar{c}$ (\Cref{obs:pd_properties}~(\ref{obs:pd_factorizations})),
    any substring $X$ ending just before $p_k$ (resp.\ $q_k$)
    has an occurrence ending just before $p_k+2^{k-1} = |D_{k-1}D_{k-2}D_{k-3}|$
    (resp.\ $q_k+2^{k-1} = |D_k|$).
    Since $D_k = D_{k-1}(D_{k-2})^2 = D_{k-1}D_{k-2}D_{k-3}(D_{k-4})^2$ and
    $r_k = |D_{k-1}D_{k-2}|$,
    any such $X$ not crossing $r_k$ must be a suffix of
    $D'_{k-3}$ (resp.\ $D'_{k-2}$),
    having an occurrence ending just before~$r_k$.
    See~\Cref{fig:pd-proof} (a).

    Finally, we claim that any substring $X$ ending just before $r_k$
    will have occurrences crossing both $p_k$ and $q_k$, or $r_k$.
    We consider the following $O(k)$ disjoint cases depending on $|X|$.
    \begin{enumerate} \item $|X| > |D_{k-3}D'_{k-2}|$. See~\Cref{fig:pd-proof}~(a). Since the starting position of $X$
              is $r_k - |X| < r_k - |D_{k-3}D'_{k-2}| = 3\cdot 2^{k-2}-2^{k-3}-2^{k-2}+1 \leq p_k+1$, $X$ crosses both $p_k$ and $q_k$.
        \item\label{case:pd_proof_2} $|D_{k-3}D'_{k-2}| \geq |X| \geq |(D_{k-4})^2|$.
              See~\Cref{fig:pd-proof}~(a).
              Since $D_{k-3}D'_{k-2} = D_k[p_k+1..r_k-1] = D_k[p_k+1-|D_{k-3}|..r_k-1-|D_{k-3}|]$,
              it follows that $D_{k-3}D_{k-3}D'_{k-2}$ has a period of $|D_{k-3}|$.
              Since $|X| \geq |(D_{k-4})^2| > |D'_{k-3}|$, $X$ starts with a suffix of $D_{k-3}$
              and therefore will have occurrences crossing $p_k$ and $q_k$. \item\label{case:pd_proof_3} $|(D_{k-2i})^2| > |X| \geq |D_{k-2i}| = |(D_{k-2i-1})^2|$ for $i \geq 2, k-2i-1\geq 0$.
              See~\Cref{fig:pd-proof}~(b).
              Due to~\Cref{obs:pd_properties}~(\ref{obs:cube_in_square}),
              there is an occurrence of $(D_{k-2i})^2$ centered at (i.e., the first copy ending at) both
              $r' = r_k-|(D_{k-2i})|$ and $r_k$.
              Since $X$ is a suffix of $D_{k-2i}D'_{k-2i}$ and crosses~$r'$, it also has an occurrence crossing $r_k$.
        \item\label{case:pd_proof_4} $|(D_{k-2i-1})^2| > |D'_{k-2i}| \geq |X| \geq |D_{k-2i-1}|=|(D_{k-2i-2})^2|$ for $i \geq 2, k-2i-2\geq 0$.
              See~\Cref{fig:pd-proof}~(b) and (c).
              $X$ is a suffix of $D'_{k-2i}$.
              There is an occurrence of $(D_{k-3})^2$ centered at both $p_k$ and $q_k$.
              Due to~\Cref{obs:pd_properties}~(\ref{obs:cube_in_square}),
              there must be an occurrence of $(D_{k-2i-1})^2$ also centered at $p_k$ and $q_k$,
              and thus an occurrence of $D'_{k-2i}$ (\Cref{obs:pd_properties}~(\ref{obs:prefix_prime_in_square}))
              of which $X$ is a suffix.
              Since $|X| \geq |D_{k-2i-1}| > |D'_{k-2i-1}|$,
              $X$ has occurrences that cross $p_k$ as well as $q_k$.
        \item $|X| = 1$. $D_3[p_3] = D_3[q_3] = \texttt{a}$ and $D_3[r_3]=\texttt{b}$.
              It is easy to see that $D_k[p_k] = D_k[q_k] = c$ and $D_k[r_k]=\bar{c}$ from the definition of $\phi$, so $X$ will cross both $p_k$ and $q_k$, or $r_k$.\end{enumerate}
    Thus, $\{\{p_k,r_k\},\{q_k,r_k\}\}\subseteq \attof{D_k}$, proving the lemma.
\end{proof}
See~\Cref{subsec:additional_figures_for_lem_pd_valid_attractors} for additional figures illustrating the proof of~\Cref{lem:pd_valid_attractors}.
\begin{figure}
    \centering
    \includegraphics[width=\textwidth]{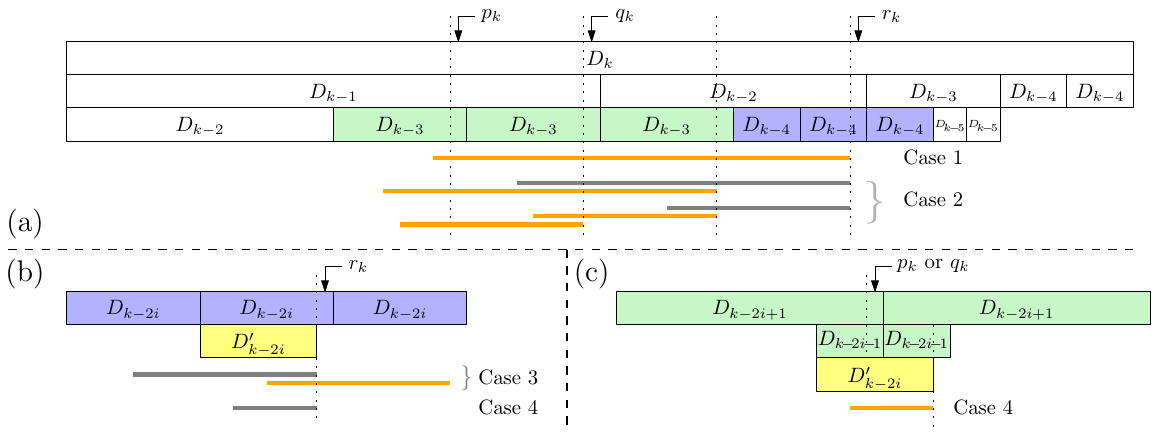}
    \caption{Illustration of the proof of~\Cref{lem:pd_valid_attractors}.
    There are 4 main cases depending on the length of $X$
    which is a suffix of $D_k[1..r_k)$ and shown in gray (except for Case 1, which already crosses both $p_k$ and $q_k$),
    and the occurrence crossing both $p_k$ and $q_k$, or $r_k$ is shown in orange.
    }\label{fig:pd-proof}
\end{figure}

Next,
we will show that no other position pairs can be attractors of $D_n$ with the help of the following lemma.
\begin{lemma}\label{lem:pd_attractors_completeness_1}
    For $n \geq 4$, let $\{ p, q \} \in \attof{D_n}$ and
    $D_n[p] = \texttt{a}$ and $D_n[q]=\texttt{b}$.
    Then,
    $p,q$ are both even and $\{p',q'\} = \{ p/2, q/2 \} \in \attof{D_{n-1}}$.
\end{lemma}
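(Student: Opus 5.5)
The plan is to pass from $D_n$ to $D_{n-1}$ through the morphism $\phi$, using the parity rigidity of \Cref{obs:pd_parity_b} and \Cref{cor:pd_substring_parity}. First, $q$ is even, since every $\texttt{b}$ of $D_n$ sits at an even position (\Cref{obs:pd_parity_b}).

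\textbf{Step 1: the aligned-block correspondence.} I would partition $D_n$ into the $2^{n-1}$ aligned blocks $D_n[2t-1..2t]=\phi(D_{n-1}[t])$. Take a substring $Y'$ of $D_{n-1}$ with $|Y'|\ge 2$. Then $|\phi(Y')|\ge 4$, so by \Cref{cor:pd_substring_parity} every occurrence of $\phi(Y')$ in $D_n$ starts at an odd position. Since $\phi$ is injective on letters, aligned occurrences of $\phi(Y')$ in $D_n$ are in bijection with occurrences of $Y'$ in $D_{n-1}$. An aligned occurrence $\phi(D_{n-1}[i..j])$ crosses a position $s$ of $D_n$ iff $i\le\lceil s/2\rceil\le j$. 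So if $\{p,q\}$ attracts $D_n$, then every $Y'$ with $|Y'|\ge 2$ has an occurrence crossing $\lceil p/2\rceil$ or $q/2$. Single letters of $D_{n-1}$ are handled separately: they are $\texttt{a}$ and $\texttt{b}$, and $D_{n-1}[q/2]=\texttt{a}$ because $D_n[q]=\texttt{b}$ comes from an $\texttt{a}$. A $\texttt{b}$ of $D_{n-1}$ must then lie at $\lceil p/2\rceil$, or the length-$1$ case needs extra care, since $\texttt{b}$ must be crossed somewhere. I would check this by looking at $\phi(\texttt{b})=\texttt{aa}$ starting at an odd position, together with the $\texttt{aab}$ context forced by \Cref{obs:pd_parity_b}. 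The outcome of Step 1 is that $\{\lceil p/2\rceil, q/2\}$ is an attractor of $D_{n-1}$, hence a smallest one by size.

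\textbf{Step 2: $p$ is even.} This is the main obstacle. Suppose $p$ is odd, so $D_n[p]=\texttt{a}$ is the first letter of block $(p+1)/2$. I would apply the argument of Step 1 a second time, now to substrings of $D_n$ that end at the even position $p-1$. Every aligned block sequence $\phi(Y')$ is crossed at $p$ exactly when it contains block $(p+1)/2$. The extra letter then also lets me consider substrings of the form $\phi(Y')\texttt{a}$ and $\texttt{a}\phi(Y')$, whose parity is again fixed. From these I expect to derive that $(p-1)/2$ can replace $(p+1)/2$ in $D_{n-1}$, or some similar rigid constraint. I would then invoke the characterization of $\attof{D_{n-1}}$. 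Because this lemma serves the completeness direction of \Cref{thm:pd_attractors}, I would run the whole argument inside an induction on $n$, with base cases $n=4,5$ checked by brute force. By the induction hypothesis, $\{\lceil p/2\rceil,q/2\}$ must be one of $\{p_{n-1},r_{n-1}\}$ or $\{q_{n-1},r_{n-1}\}$. The letter condition forces $q/2=r_{n-1}$ or a determined alternative, and this pins $\lceil p/2\rceil$ to $p_{n-1}$ or $q_{n-1}$. For odd $p=2\lceil p/2\rceil-1$, I would then exhibit a concrete witness that cannot be crossed. I would look first for a unique square $(D_{n-3})^2$ or a MUS of $D_n$ near $r_n$ whose occurrence covers $q$ but misses $p$, or covers $2\lceil p/2\rceil$ but misses $2\lceil p/2\rceil-1$, and use \Cref{obs:pd_properties} to locate it.

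\textbf{Step 3: conclusion.} Once $p$ is even we have $\lceil p/2\rceil=p/2$, and Step 1 gives $\{p/2,q/2\}\in\attof{D_{n-1}}$, since that set has size $2=\gamma(D_{n-1})$.
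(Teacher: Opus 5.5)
Your Steps 1 and 3 are sound for even $p$ and match the last paragraph of the paper's proof. For $|Y'|\ge 2$, the parity rigidity of $\phi(Y')$ transfers a crossing of $p$ or $q$ to a crossing of $\lceil p/2\rceil$ or $q/2$. Once $p$ is even, $D_n[p-1..p]=\texttt{aa}=\phi(\texttt{b})$ gives $D_{n-1}[p/2]=\texttt{b}$, which settles single letters.

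The gap is Step 2, which is the real content of the lemma. You do not prove that $p$ is even. You only describe a strategy (``I expect to derive'', ``some similar rigid constraint'', ``I would look first for''), and that strategy breaks at its first step. To apply the induction hypothesis you need $\{\lceil p/2\rceil,q/2\}$ to be an attractor of $D_{n-1}$, which requires $D_{n-1}[\lceil p/2\rceil]=\texttt{b}$. For odd $p$ this does not follow. Suppose $D_n[p+1]=\texttt{b}$. Then \texttt{aaa}, which cannot cross the \texttt{b} at $q$, must occur as $D_n[p-2..p]$. That only yields $D_{n-1}[(p-1)/2]=\texttt{b}$, while $D_{n-1}[(p+1)/2]=\texttt{a}$ because $D_n[p..p+1]=\texttt{ab}$. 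So $\{\lceil p/2\rceil,q/2\}$ crosses no \texttt{b}, and the characterization of $\attof{D_{n-1}}$ tells you nothing.

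In the complementary subcase $D_n[p+1]=\texttt{a}$, Step 1 does apply. The hypothesis then narrows $\{p,q\}$ to $\{r_n-1,p_n\}$ or $\{r_n-1,q_n\}$ when $n$ is even, and to $\{p_n-1,r_n\}$ or $\{q_n-1,r_n\}$ when $n$ is odd. You still owe, for each candidate, a factor of $D_n$ with no occurrence crossing $p$ or $q$, and none is exhibited. The pointers to a ``unique square'' or a MUS near $r_n$ are not a witness.

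The missing idea is that the parity of $p$ is forced \emph{locally}, with no induction and no appeal to \Cref{thm:pd_attractors}. Since $q$ is a \texttt{b} at an even position, each of the short factors \texttt{aaa}, \texttt{aab}, \texttt{baa}, \texttt{bab}, \texttt{abab}, \texttt{aabaa}, \texttt{aabab}, \texttt{babab}, \texttt{babaa}, \texttt{ababab} must be crossed by $p$ or $q$. All of them occur in $D_n$ for $n\ge 4$. Together with \Cref{obs:pd_parity_b}, this successively forces the letters around $p$ and $q$.

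\begin{itemize}
\item The paper first excludes $D_n[p+1]=\texttt{b}$. In that case \texttt{aaa}, \texttt{baa}, \texttt{bab}, \texttt{aabaa}, \texttt{aabab} force $D_n[p-2..p+3]=\texttt{aaabaa}$ and $D_n[q-5..q+4]=\texttt{aaababaaab}$. Then \texttt{ababab} cannot be crossed.
\item It then excludes odd $p$. Now $D_n[p..p+3]=\texttt{aaab}$, and \texttt{aab}, \texttt{abab}, \texttt{aabaa} force contexts in which \texttt{babab} and \texttt{babaa} cannot both be crossed.
\end{itemize}

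This case analysis is what your Step 2 needs to be replaced by. It also disposes of exactly the configuration $D_n[p+1]=\texttt{b}$ that breaks your Step 1 for odd $p$.
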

\begin{proof}
    The strings $\texttt{aaa}$, $\texttt{aab}$, $\texttt{baa}$, $\texttt{bab}$, $\texttt{abab}$, $\texttt{aabaa}$, $\texttt{aabab}$, $\texttt{babab}$, $\texttt{babaa}$, $\texttt{ababab}$ are all substrings of $D_n$ for $n\geq 4$.
    It is clear that $q$ is even from~\Cref{obs:pd_parity_b}.

    We first claim that $D_n[p+1] \neq \texttt{b}$.
    Suppose to the contrary that $D_n[p+1] = \texttt{b}$.
    Then, for $\texttt{aaa}$ to have an occurrence crossing $p$,
    $D_n[p-2..p+1] = \texttt{aa\underline{a}b}$,
    where the underlined $\texttt{a}$ corresponds to position $p$.
    From~\Cref{obs:pd_parity_b}, we have $D_n[q-1..q]=\texttt{a\underline{b}}$,
    where the underlined
    $\texttt{b}$ corresponds to position $q$.
    In order for the substrings $\texttt{baa}$ and $\texttt{bab}$ to have an occurrence crossing one of these positions,
    $D_n[q-3..q+4]=\texttt{\textbf{a}ba\underline{b}aa\textbf{ab}}$, where the bold
    $\texttt{\textbf{a}}$ and $\texttt{\textbf{ab}}$ are due to~\Cref{obs:pd_parity_b}.
    Next, in order for the substrings $\texttt{aabaa}$ and $\texttt{aabab}$ to have an occurrence crossing $p$ or $q$,
    $D_n[p-2..p+3] = \texttt{aa\underline{a}baa}$ and $D_n[q-5..q+4] = \texttt{\textbf{a}aaba\underline{b}aaab}$.
    However, then, there cannot be an occurrence of $\texttt{ababab}$ that crosses $p$ or $q$ contradicting that they are an attractor.

    Next, we claim that $p$ is even. If it is odd, then, from the above arguments,
    $D_n[p..p+3] = \texttt{\underline{a}aab}$.
    We also have $D_n[q-1..q]=\texttt{a\underline{b}}$.
    For $\texttt{aab}$ and $\texttt{abab}$ to cross $p$ or $q$,
    $D_n[q-3..q+2]=\texttt{\textbf{a}aa\underline{b}ab}$. Furthermore, for $\texttt{aabaa}$ to cross $p$ or $q$,
    $D_n[p-3..p+3] = \texttt{aab\underline{a}aab}$.
    However, then at least one of $\texttt{babab}$ and $\texttt{babaa}$ cannot cross $p$ or $q$.

    Finally, consider any substring $X'$ of $D_{n-1}$.
    If $|X'|=1$, then, from the above arguments,
    $D_{n-1}[p'] = \texttt{b}$ since $D_{n}[p]=\texttt{a}$ could not have been derived by $\phi(\texttt{a})$,
    and $D_{n-1}[q'] = \texttt{a}$ since $D_{n}[q]=\texttt{b}$ must have been derived by $\phi(\texttt{a})$.
    Hence, $X'$ crosses $p'$ or $q'$.
    If $|X'|\geq 2$, consider the corresponding substring $X = \phi(X')$ in $D_n$.
    Since $\{p, q\}$ is an attractor of $D_n$, $X$ has an occurrence crossing $p$ or $q$.
    Furthermore, since $|X|\geq 4$ and the parity of its occurrences is determined uniquely~(\Cref{cor:pd_substring_parity}),
    an occurrence of $X$ at position $k$ in $D_n$ crossing $p$ or $q$
    implies an occurrence of $X'$ at position
    $\lceil k/2\rceil$ in $D_{n-1}$ crossing $p'$ or $q'$.
\end{proof}

\begin{proof}[Proof of~\Cref{thm:pd_attractors}]
    Assume the statement of~\Cref{thm:pd_attractors}
    holds for all $3\leq n < k$.
    \Cref{lem:pd_attractors_completeness_1}
    implies that any element other than $\{p_k,r_k\}$ or $\{q_k,r_k\}$ in $\attof{D_k}$
    would imply an element in $\attof{D_{k-1}}$ not in $\{\{p_{k-1},r_{k-1}\},\{q_{k-1},r_{k-1}\}\}$
    contradicting the induction hypothesis.
    Together with~\Cref{lem:pd_valid_attractors}, this completes the proof.
\end{proof}

\clearpage
\bibliography{ref}

\begin{thebibliography}{10}

\bibitem{DBLP:journals/ipl/Luca81}
Aldo de~Luca.
\newblock A combinatorial property of the {F}ibonacci words.
\newblock {\em Inf. Process. Lett.}, 12(4):193--195, 1981.
\newblock \href {https://doi.org/10.1016/0020-0190(81)90099-5}
  {\path{doi:10.1016/0020-0190(81)90099-5}}.

\bibitem{DBLP:journals/jintseq/Fici15}
Gabriele Fici.
\newblock Factorizations of the {Fibonacci} infinite word.
\newblock {\em J. Integer Seq.}, 18(9):15.9.3, 2015.
\newblock URL:
  \url{https://cs.uwaterloo.ca/journals/JIS/VOL18/Fici/fici5.html}.

\bibitem{IlieS11}
Lucian Ilie and William~F. Smyth.
\newblock Minimum unique substrings and maximum repeats.
\newblock {\em Fundam. Informaticae}, 110(1-4):183--195, 2011.
\newblock \href {https://doi.org/10.3233/FI-2011-536}
  {\path{doi:10.3233/FI-2011-536}}.

\bibitem{DBLP:conf/stoc/KempaP18}
Dominik Kempa and Nicola Prezza.
\newblock At the roots of dictionary compression: string attractors.
\newblock In Ilias Diakonikolas, David Kempe, and Monika Henzinger, editors,
  {\em Proceedings of the 50th Annual {ACM} {SIGACT} Symposium on Theory of
  Computing, {STOC} 2018, Los Angeles, CA, USA, June 25-29, 2018}, pages
  827--840. {ACM}, 2018.
\newblock \href {https://doi.org/10.1145/3188745.3188814}
  {\path{doi:10.1145/3188745.3188814}}.

\bibitem{DBLP:journals/siamcomp/KnuthMP77}
Donald~E. Knuth, James H.~Morris Jr., and Vaughan~R. Pratt.
\newblock Fast pattern matching in strings.
\newblock {\em {SIAM} J. Comput.}, 6(2):323--350, 1977.
\newblock \href {https://doi.org/10.1137/0206024} {\path{doi:10.1137/0206024}}.

\bibitem{DBLP:conf/spire/KutsukakeMNIBT20}
Kanaru Kutsukake, Takuya Matsumoto, Yuto Nakashima, Shunsuke Inenaga, Hideo
  Bannai, and Masayuki Takeda.
\newblock On repetitiveness measures of {T}hue--{M}orse words.
\newblock In Christina Boucher and Sharma~V. Thankachan, editors, {\em String
  Processing and Information Retrieval - 27th International Symposium, {SPIRE}
  2020, Orlando, FL, USA, October 13-15, 2020, Proceedings}, volume 12303 of
  {\em Lecture Notes in Computer Science}, pages 213--220. Springer, 2020.
\newblock \href {https://doi.org/10.1007/978-3-030-59212-7\_15}
  {\path{doi:10.1007/978-3-030-59212-7\_15}}.

\bibitem{DBLP:conf/dcc/LarssonM99}
N.~Jesper Larsson and Alistair Moffat.
\newblock Offline dictionary-based compression.
\newblock In {\em Data Compression Conference, {DCC} 1999, Snowbird, Utah, USA,
  March 29-31, 1999}, pages 296--305. {IEEE} Computer Society, 1999.
\newblock \href {https://doi.org/10.1109/DCC.1999.755679}
  {\path{doi:10.1109/DCC.1999.755679}}.

\bibitem{Lothaire_2002}
M.~Lothaire.
\newblock {\em Sturmian Words}, page 45–110.
\newblock Encyclopedia of Mathematics and its Applications. Cambridge
  University Press, 2002.

\bibitem{DBLP:conf/ictcs/MantaciRRRS19}
Sabrina Mantaci, Antonio Restivo, Giuseppe Romana, Giovanna Rosone, and
  Marinella Sciortino.
\newblock String attractors and combinatorics on words.
\newblock In Alessandra Cherubini, Nicoletta Sabadini, and Simone Tini,
  editors, {\em Proceedings of the 20th Italian Conference on Theoretical
  Computer Science, {ICTCS} 2019, Como, Italy, September 9-11, 2019}, volume
  2504 of {\em {CEUR} Workshop Proceedings}, pages 57--71. CEUR-WS.org, 2019.
\newblock URL: \url{https://ceur-ws.org/Vol-2504/paper8.pdf}.

\bibitem{DBLP:journals/tcs/MantaciRRRS21}
Sabrina Mantaci, Antonio Restivo, Giuseppe Romana, Giovanna Rosone, and
  Marinella Sciortino.
\newblock A combinatorial view on string attractors.
\newblock {\em Theor. Comput. Sci.}, 850:236--248, 2021.
\newblock \href {https://doi.org/10.1016/J.TCS.2020.11.006}
  {\path{doi:10.1016/J.TCS.2020.11.006}}.

\bibitem{DBLP:conf/cpm/MienoIH22}
Takuya Mieno, Shunsuke Inenaga, and Takashi Horiyama.
\newblock {RePair} grammars are the smallest grammars for {Fibonacci} words.
\newblock In Hideo Bannai and Jan Holub, editors, {\em 33rd Annual Symposium on
  Combinatorial Pattern Matching, {CPM} 2022, Prague, Czech Republic, June
  27-29, 2022}, volume 223 of {\em LIPIcs}, pages 26:1--26:17. Schloss Dagstuhl
  - Leibniz-Zentrum f{\"{u}}r Informatik, 2022.
\newblock \href {https://doi.org/10.4230/LIPICS.CPM.2022.26}
  {\path{doi:10.4230/LIPICS.CPM.2022.26}}.

\bibitem{mousavi2021automatictheoremprovingwalnut}
Hamoon Mousavi.
\newblock Automatic theorem proving in {Walnut}, 2021.
\newblock \href {https://arxiv.org/abs/1603.06017} {\path{arXiv:1603.06017}}.

\bibitem{schaeffer2024stringattractorsautomaticsequences}
Luke Schaeffer and Jeffrey Shallit.
\newblock String attractors for automatic sequences, 2024.
\newblock \href {https://arxiv.org/abs/2012.06840} {\path{arXiv:2012.06840}}.

\bibitem{DBLP:journals/ejc/WenW94}
Zhi{-}Xiong Wen and Zhi{-}Ying Wen.
\newblock Some properties of the singular words of the {Fibonacci} word.
\newblock {\em Eur. J. Comb.}, 15(6):587--598, 1994.
\newblock \href {https://doi.org/10.1006/EUJC.1994.1060}
  {\path{doi:10.1006/EUJC.1994.1060}}.

\end{thebibliography}
\clearpage

\appendix
\crefalias{section}{appendix}

\section{Using Walnut to Characterize the Smallest Attractors}\label{sec:Walnut}
Using the approach by Schaeffer and Shallit~\cite{schaeffer2024stringattractorsautomaticsequences},
we can characterize the smallest attractors of period-doubling and Fibonacci words using the following Walnut program.
Below, we only introduce the connection and do not give rigorous arguments.

\begin{verbatim}
def pdfaceq "k+i>=j & A u,v (u>=i & u<=j & v+j=u+k) => PD[u]=PD[v]":
def pdsa2 "(i1<n) & (i2<n) & Ak,l (k<=l & l<n) => (Er,s r<=s & s<n & 
 (s+k=r+l) & $pdfaceq(k,l,s) & ((r<=i1 & i1<=s) | (r<=i2 & i2<=s)))":
def pdfa1 "An (n>=2) => Ei1,i2 $pdsa2(i1,i2,n)":
def pdfa2 "$pdsa2(i1,i2,n) & i1 < i2":

def fibfaceq "?msd_fib k+i>=j & A u,v (u>=i & u<=j & v+j=u+k) => F[u]=F[v]":
def fibsa2 "?msd_fib (i1<n) & (i2<n) & Ak,l (k<=l & l<n) => 
 (Er,s r<=s & s<n & (s+k=r+l) & $fibfaceq(k,l,s) 
 & ((r<=i1 & i1<=s) | (r<=i2 & i2<=s)))":
def fibfa1 "?msd_fib An (n>=2) => Ei1,i2 $fibsa2(i1,i2,n)":
def fibfa2 "?msd_fib $fibsa2(i1,i2,n) & i1 < i2":
\end{verbatim}

The code for the automaton \texttt{pdfa2} for period-doubling words is taken from~\cite{schaeffer2024stringattractorsautomaticsequences}.
The code for \texttt{fibfa2} only changes \texttt{PD} (period-doubling words)
to \texttt{F} (Fibonacci words), and specifies the numeration system
the corresponding automaton is defined for.

See~\Cref{fig:pdfa2,fig:fibfa2}. While these automata characterize the set of all
smallest string attractors of arbitrary prefixes of the period-doubling words
or Fibonacci words, their interpretation is not straightforward.
To limit the lengths of prefixes to the words we consider, we can modify the above to:
\begin{verbatim}
reg pow2 msd_2 "10*":
def pdsapow2 "(i1<n) & (i2<n) & (i1<i2) & $pow2(n) & Ak,l (k<=l & l<n) =>
 (Er,s r<=s & s<n & (s+k=r+l) & $pdfaceq(k,l,s) 
 & ((r<=i1 & i1<=s) | (r<=i2 & i2<=s)))":

reg fibpow msd_fib "10*":
def fibsaf "?msd_fib (i1<n) & (i2<n) & (i1<i2) & $fibpow(n) &
 Ak,l (k<=l & l<n) => (Er,s r<=s & s<n & (s+k=r+l) & $fibfaceq(k,l,s) 
 & ((r<=i1 & i1<=s) | (r<=i2 & i2<=s)))":
\end{verbatim}
\Cref{fig:pdfa-pow2} shows the sub-automaton \texttt{pdsapow2} of \texttt{pdfa2} for paths corresponding to prefixes of $D_\infty$ of lengths $2^k$, and can be used to verify the results of~\Cref{thm:pd_attractors}.
For Fibonacci words, \Cref{fig:fibfa2-filtered} shows a sub-automaton \texttt{fibsaf} of \texttt{fibfa2}
containing only the accepting paths for prefixes of length $f_k$,
which can be used to verify the results of~\Cref{thm:all_smallest_attractors_of_fib}.

We note that a similar program can be made for Thue-Morse words and $4$ attractor positions,
but this straightforward approach was not feasible as the computation required more memory than was available
and could not be completed.

\begin{figure}
    \centering
    \includegraphics[height=0.7\textheight]{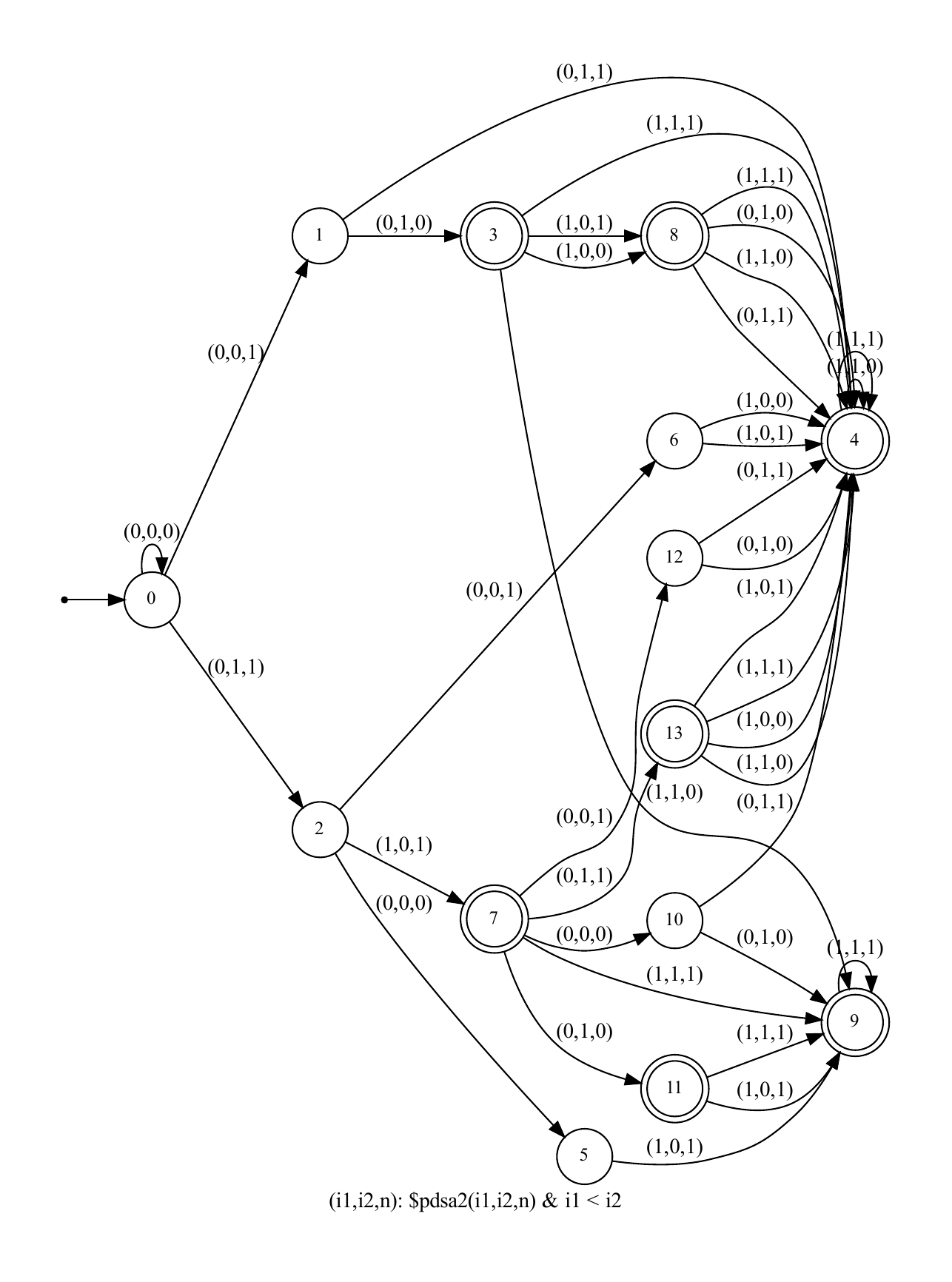}
    \caption{The automaton \texttt{pdfa2} accepting a sequence $s\in (\{0,1\}\times\{0,1\}\times\{0,1\})^*$ if and only if $s[i] = (\mathsf{p}[i],\mathsf{q}[i],\mathsf{n}[i])$ where $\mathsf{p}, \mathsf{q}, \mathsf{n}$ are respectively the binary
        representation of integers $p-1$, $q-1$, $n$,
        satisfying $\{p, q\} \in \attof{D_\infty[1..n]}$ and $p < q$.
        Here, the $-1$ is due to the difference in the start index used for strings.
        By analyzing
        accepting paths for which $\mathsf{n} = 10^k$ implying $n = 2^k$
        we can verify the statement of~\Cref{thm:pd_attractors}.
        See also~\Cref{fig:pdfa-pow2}.}\label{fig:pdfa2}
\end{figure}
\begin{figure}
    \centering
    \includegraphics[width=\textwidth]{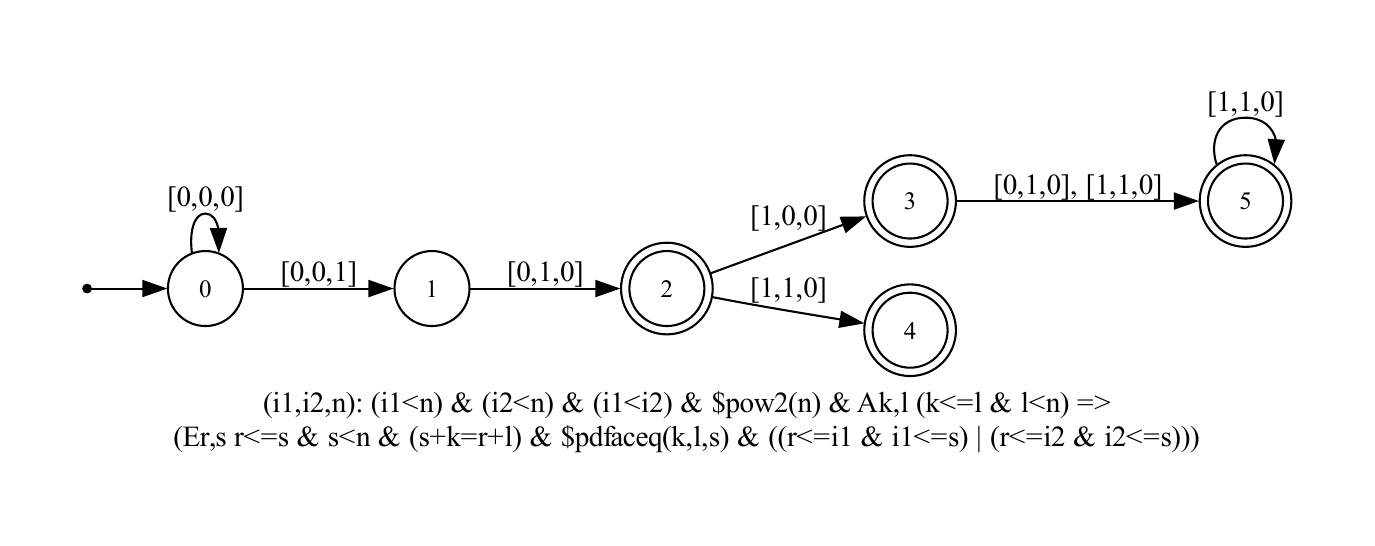}
    \caption{The automaton \texttt{pdsapow2} obtained by removing, from the automaton in~\Cref{fig:pdfa2}, all edges and nodes that are not part of
        accepting paths with $\mathsf{n}=10^k$ implying $n = 2^k$.
        We can see that
        For $k = 2$,
        $\mathsf{p} = 001$ implying $p=2$ and
        $\mathsf{q} = 010 \mbox{ or } 011$ implying $q=3 \mbox{ or } 4$,
        i.e., $\attof{D_k} = \{\{ 2,3 \}, \{ 2,4 \} \}$.
        For $k \geq 3$,
        $\mathsf{p} = 00101^{k-3} \mbox{ or } 00111^{k-3}$ implying
        $p = 3\cdot 2^{k-3}\mbox{ or } 2^{k-1}$,
        and $\mathsf{q} = 01011^{k-3}$ implying
        $q = 3\cdot2^{k-2}$, i.e.,
        $\attof{D_k} = \{ \{  3\cdot 2^{k-3}, 3\cdot2^{k-2}\}, \{ 2^{k-1}, 3\cdot2^{k-2}\} \}$,
        verifying the statement of~\Cref{thm:pd_attractors}.
    }\label{fig:pdfa-pow2}
\end{figure}

\begin{figure}
    \centering
    \includegraphics[height=0.7\textheight]{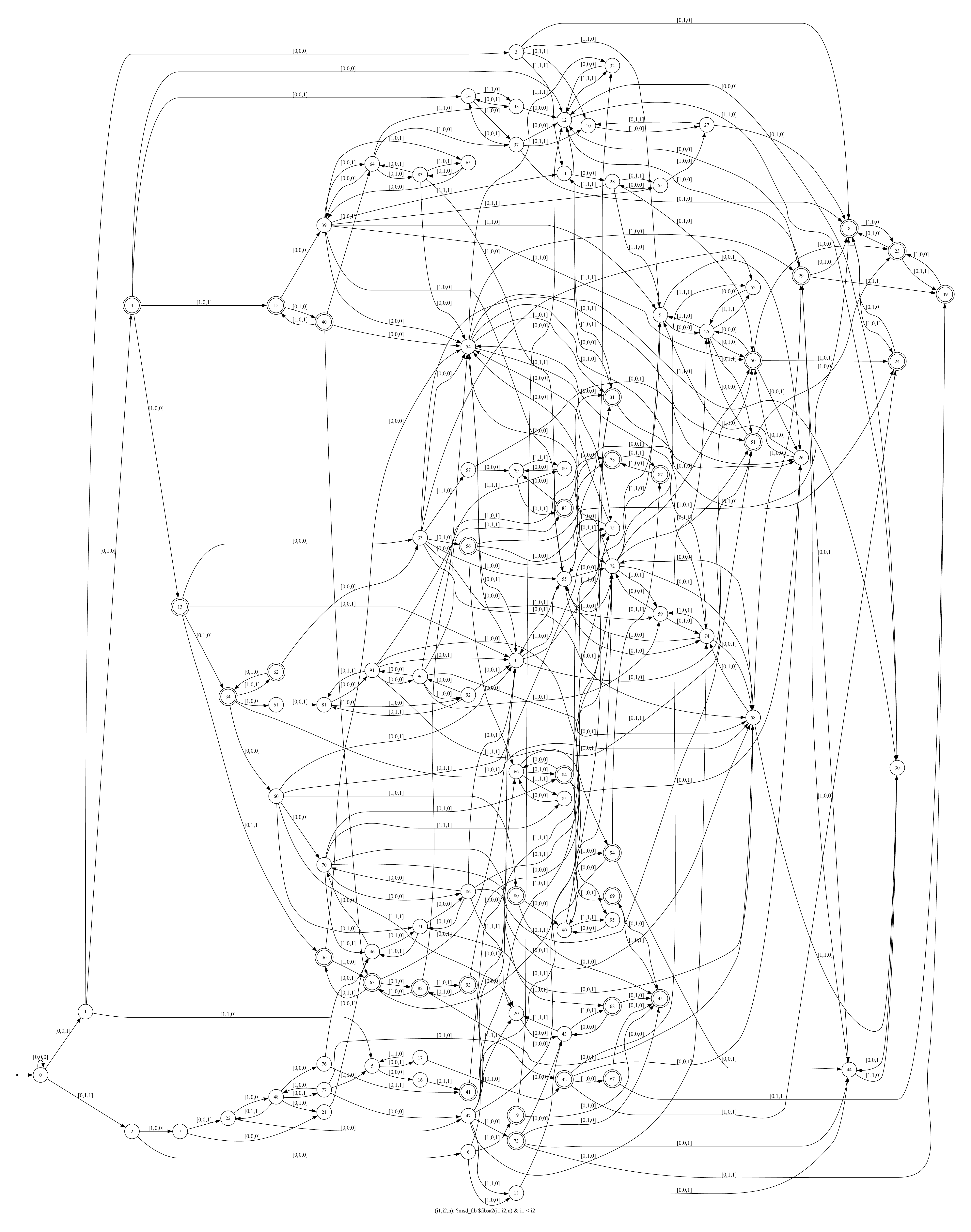}
    \caption{The automaton \texttt{fibsa2} accepting a sequence $s\in (\{0,1\}\times\{0,1\}\times\{0,1\})^*$ if and only if $s[i] = (\mathsf{p}[i],\mathsf{q}[i],\mathsf{n}[i])$ where $\mathsf{p}, \mathsf{q}, \mathsf{n}$ are respectively the
        bit representation, in Fibonacci numeration (Zeckendorf bit representation),
        of integers $p-1$, $q-1$, $n$ satisfying
        $\{p,q\} \in \attof{F_\infty[1..n]}$ and $p < q$.
        Here, the $-1$ is due to the difference in the start index used for strings.
        The accepting paths for which $\mathsf{n} = 10^k$ implies
        $n = f_{k+1}$ and the values $p$ and $q$ implied from $\mathsf{p}$ and $\mathsf{q}$ should correspond to the statement of~\Cref{thm:all_smallest_attractors_of_fib}.
        See~\Cref{fig:fibfa2-filtered} for a simplified automaton containing only such paths.
    }\label{fig:fibfa2}
\end{figure}
\begin{figure}
    \centering
    \includegraphics[width=\textwidth]{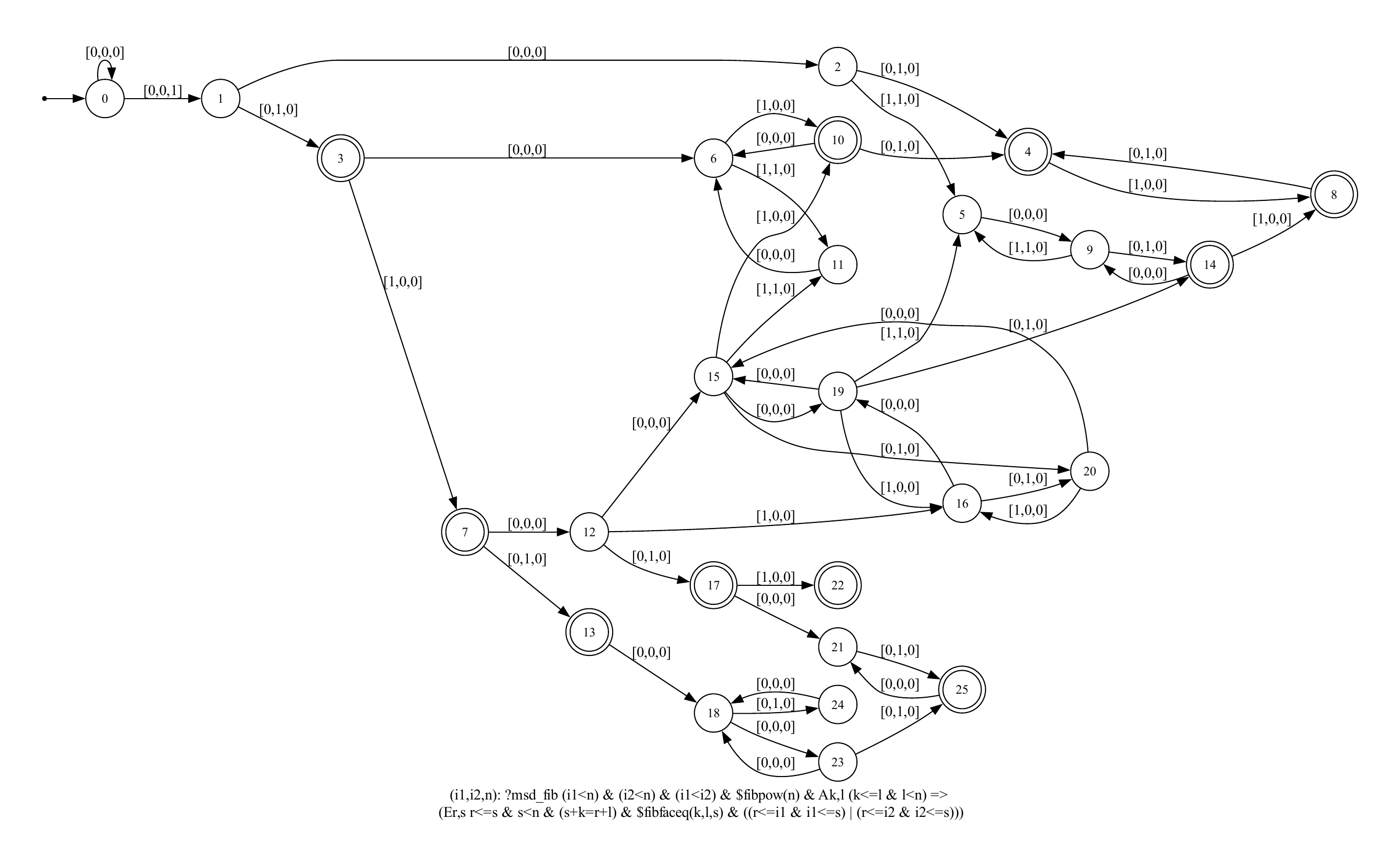}
    \caption{The automaton \texttt{fibsaf} obtained by removing, from the automaton in~\Cref{fig:fibfa2}, all edges and nodes that are not part of
        accepting paths with $\mathsf{n}=10^k$ representing $f_{k+1}$ in the Fibonacci numeration.
        Each accepting path of length $n$ starting with $[0,0,1]$
        represents a position pair in $\attof{F_n}$, and the automaton can be used to verify~\Cref{thm:all_smallest_attractors_of_fib} (in principle).
    }\label{fig:fibfa2-filtered}
\end{figure}

\clearpage
\section{Additional Figures}\label{sec:additional-figures}

\begin{figure}[h]
    \centering
    \includegraphics[width=0.8\linewidth]{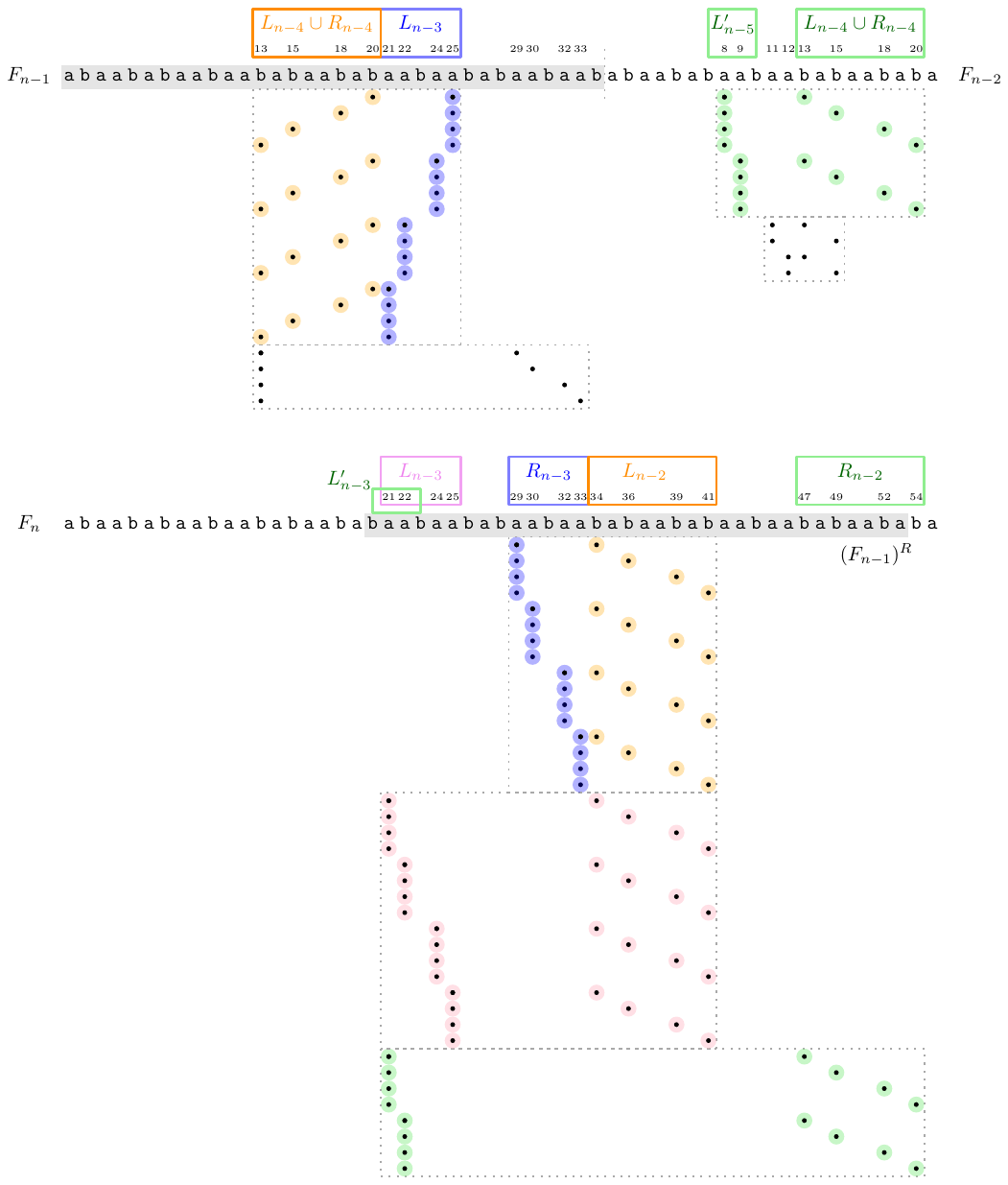}
    \caption{For $n=9$.
        For each $F_n$ (bottom), $F_{n-1}$ (top left), and $F_{n-2}$ (top right),
        all of their smallest attractors are indicated as pairs of horizontally aligned dots.
        Characterization of the attractors of $F_n$ via \cref{thm:all_smallest_attractors_of_fib}
        is illustrated by colors:
        the blue and yellow dots illustrate the mapping via the occurrences of $F_{n-1}$ and $(F_{n-1})^R$ in \cref{lem:valid_att_from_flip};
        the pink dots illustrate the offset in \cref{lem:valid_att_from_flip_offset};
        while the green dots illustrate \cref{lem:valid_att_from_n-2_offset}.
    }
    \label{fig:att-example-n=9}
\end{figure}

\clearpage
\subsection{Additional Figures for the Proof of \texorpdfstring{\cref{lem:pd_valid_attractors}}{}}\label{subsec:additional_figures_for_lem_pd_valid_attractors}

\begin{figure}[h]
    \centering
    \includegraphics[width=0.45\linewidth]{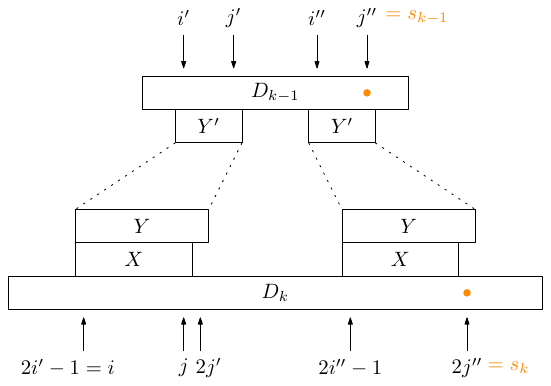}
    \caption{Illustration of the first paragraph of the proof of \cref{lem:pd_valid_attractors}.}
    \label{fig:pd-x-y}
\end{figure}

\begin{figure}[h]
    \centering
    \includegraphics[width=\linewidth]{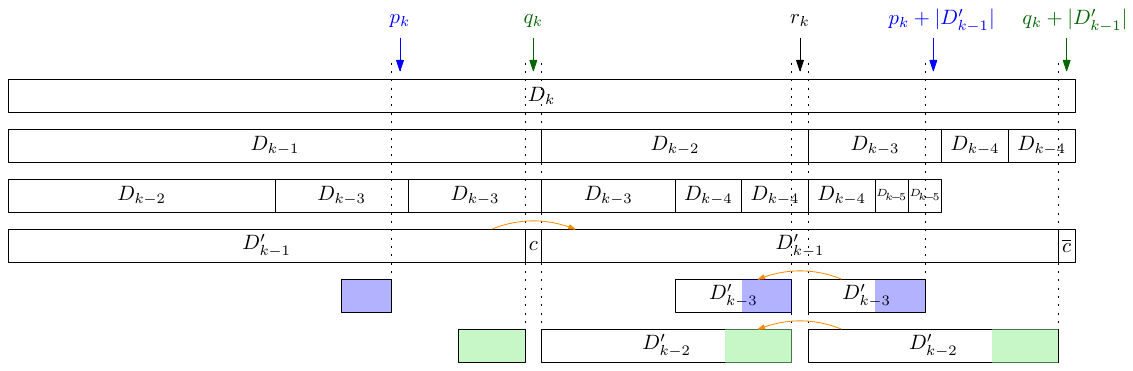}
    \caption{
        Illustration of the second paragraph of the proof of \cref{lem:pd_valid_attractors}.
        Blue and green blocks represent substrings that have an occurrence ending just before $p_k$ and $q_k$, respectively, but do not have an occurrence crossing $r_k$ when considering its occurrence in the second half of the string via $D'_{k-1}$.
        Such substrings have occurrences ending just before $r_k$.
    }
    \label{fig:pd-x-p-q-r}
\end{figure}

\begin{figure}[htb]
    \centering
    \includegraphics[width=\linewidth]{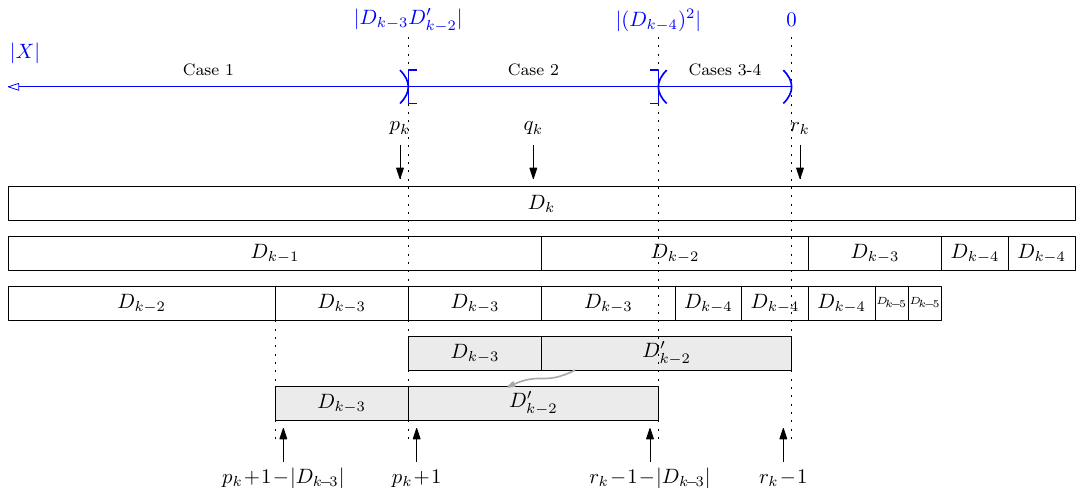}
    \caption{
        Illustration of Case~\ref{case:pd_proof_2} in the proof of \cref{lem:pd_valid_attractors}.
    }
    \label{fig:pd-interval-case-2}
\end{figure}

\begin{figure}[htb]
    \centering
    \includegraphics[width=\linewidth]{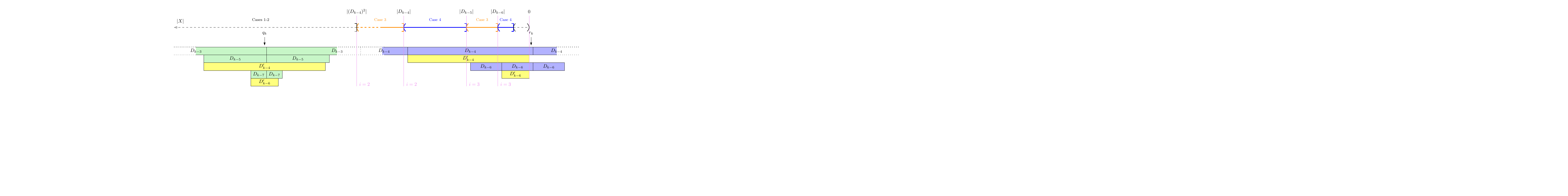}
    \caption{
        Illustration of Cases~\ref{case:pd_proof_3}--\ref{case:pd_proof_4} in the proof of \cref{lem:pd_valid_attractors} for $i=2$ and $i=3$.
        The highlighted substrings follow the same color pattern in (b) and (c) of \cref{fig:pd-proof}.
    }
    \label{fig:pd-cases-3-4}
\end{figure}

\clearpage
\section{Attractors of Small Order Instances}\label{sec:smaller_order_instances}
While we studied in the main part the smallest attractors of Fibonacci words and period-doubling words of higher orders ($F_n$ for $n \geq 7$ and $D_n$ for $n \geq 3$),
for completeness, we list the smallest attractors of lower orders in \Cref{fig:attractors-small-orders}.

\begin{figure}[h]
    \centering
    \includegraphics[width=\linewidth]{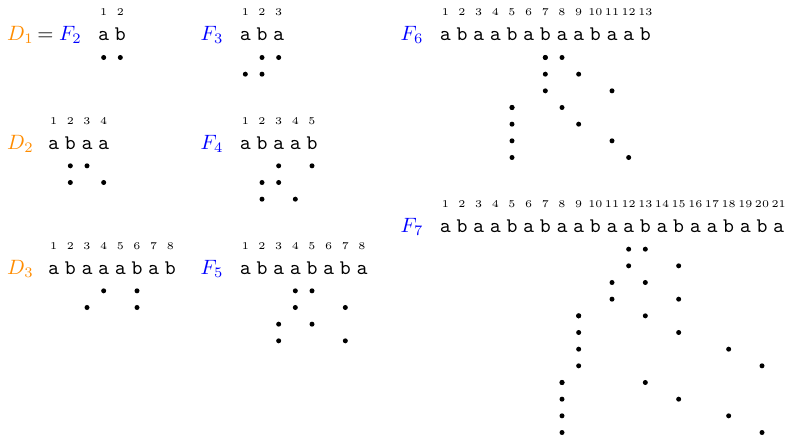}
    \caption{
        Smallest attractors of period-doubling words $D_n$ for $n=1,2,3$ and Fibonacci words $F_n$ for $n=3,4,5,6,7$.
        The visualization pattern follows that of~\Cref{fig:att-example-n=8}.
    }
    \label{fig:attractors-small-orders}
\end{figure}

\clearpage
\section{Open Problems}\label{sec:open_problems}
Some natural related open problems are:
\begin{itemize}
    \item Characterization of all smallest string attractors of Thue--Morse words.
    \item Characterization of all smallest bidirectional macro schemes (BMS) for the same family of strings considered.
    \item Characterization of all smallest string attractors and BMS for arbitrary prefixes of the considered infinite sequence.
\end{itemize}

\end{document}